\theoremstyle{plain}
\newtheorem{theorem}{\bf{\textsc{Theorem}}}[section]
\newtheorem{theoremAlpha}{\bf{\textsc{Theorem}}}
\newtheorem{corollary}[theorem]{\bf{\textsc{Corollary}}}
\newtheorem{lemma}[theorem]{\bf{\textsc{Lemma}}}
\newtheorem{proposition}[theorem]{\bf{\textsc{Proposition}}}
\numberwithin{equation}{section}
\theoremstyle{definition}
\theoremstyle{definition}
\newtheorem{remark}[theorem]{\normalfont{\textsc{Remark}}}
\newcommand{\CC}{\mathbb{C}}				
\newcommand{\RR}{\mathbb{R}}				
\newcommand{\NN}{\mathbb{N}}				
\newcommand{\Poly}{\mathcal P}			
\DeclareMathOperator{\End}{End}			
\DeclareMathOperator{\Hom}{Hom}			
\DeclareMathOperator{\GL}{GL}				
\DeclareMathOperator{\U}{U}					
\DeclareMathOperator{\SU}{SU}				
\DeclareMathOperator{\Sp}{Sp}				
\DeclareMathOperator{\SO}{SO}				
\DeclareMathOperator{\Id}{Id}				
\DeclareMathOperator{\Ad}{Ad}				
\renewcommand{\Im}{\operatorname{Im}}			
\DeclareMathOperator{\rank}{rk}			
\DeclareMathOperator{\Tr}{Tr}				
\DeclareMathOperator{\Lie}{Lie}			
\DeclareMathOperator{\Det}{Det}			
\newcommand{\linebundle}{\mathcal L} 
\renewcommand{\b}[1]{{\bf #1}}
\newcommand{\Vp}{{\mf n^+}}
\newcommand{\Vm}{{\mf n^-}}
\newcommand{\at}[1]{\big\rvert_{#1}}								
\newcommand{\sP}[2]{\langle#1|#2\rangle}						
\newcommand{\JTP}[3]{\left\{#1,\,#2,\,#3\right\}}		
\newcommand{\B}[2]{B_{#1,\,#2}}											
\newcommand{\mf}[1]{{\mathfrak{#1}}} 								
\newcommand{\JP}[1]{\text{\sffamily JP#1}}					
\newcommand{\Set}[2]{\left\{#1\,\middle|\,#2\right\}} 
\newcommand{\set}[2]{\{#1\,|\,#2\}}								  
\newcommand{\half}{{\nicefrac{1}{2}}}			
\renewcommand{\ss}{\textup{ss}}
\newcommand{\sA}{\mathcal A}
\newcommand{\sE}{\mathcal E}
\newcommand{\sN}{\mathcal N}
\newcommand{\sO}{\mathcal O}
\newcommand{\sU}{\mathcal U}
\newcommand{\sV}{\mathcal V}
\renewcommand{\sP}{\mathcal P}
\newcommand{\del}{\partial}
\newcommand{\delbar}{\bar\partial}
\newcommand{\iCR}{{\bar D}}
\tikzset{inner sep=0pt, 
  root/.style={circle,draw,minimum size=5pt,thick}, 
  cross/.style={cross out,draw,minimum size=4pt,thick},
  doubleline/.style={double distance=1.5pt,thick},
} 
\newcommand{\scale}{.3}
\newcommand{\DynkinA}{
	\begin{tikzpicture}[baseline={([yshift=-3pt]current bounding box.center)},scale=\scale]
		\node[root] (a1) at (0,0) {};
		\node[root] (a2) at (3,0) {};
		\node[cross] (a3) at (6,0) {};
		\node[root] (a4) at (9,0) {};
		\node[root] (a5) at (12,0) {};
		\draw[thick, dashed] (a1)--(a2);
		\draw[thick] (a2)--(a3);
		\draw[thick] (a3)--(a4);
		\draw[thick, dashed] (a4)--(a5);
	\end{tikzpicture}}
\newcommand{\DynkinD}{
	\begin{tikzpicture}[baseline={([yshift=-3pt]current bounding box.center)},scale=\scale]
		\node[root] (a1) at (0,0) {};
		\node[root] (a2) at (3,0) {};
		\node[root] (a3) at (6,0) {};
		\node[root] (a4) at (9,0) {};
		\node[root] (a5) at (12,1) {};
		\node[cross] (a6) at (12,-1) {};
		\draw[thick] (a1)--(a2);
		\draw[thick,dashed] (a2)--(a3);
		\draw[thick] (a3)--(a4);
		\draw[thick] (a4)--(a5);
		\draw[thick] (a4)--(a6);
	\end{tikzpicture}}
\newcommand{\DynkinBD}{
	\begin{tikzpicture}[baseline={([yshift=-3pt]current bounding box.center)},scale=\scale]
		\node[cross] (a1) at (0,0) {};
		\node[root] (a2) at (3,0) {};
		\node[root] (a3) at (6,0) {};
		\node[root] (a4) at (9,0) {};
		\node[root] (a5) at (12,1) {};
		\node[root] (a6) at (12,-1) {};
		\draw[thick] (a1)--(a2);
		\draw[thick,dashed] (a2)--(a3);
		\draw[thick] (a3)--(a4);
		\draw[thick] (a4)--(a5);
		\draw[thick] (a4)--(a6);
	\end{tikzpicture}}
\newcommand{\DynkinC}{
	\begin{tikzpicture}[baseline={([yshift=-3pt]current bounding box.center)},scale=\scale]
		\node[root] (a1) at (0,0) {};
		\node[root] (a2) at (3,0) {};
		\node[root] (a3) at (6,0) {};
		\node[root] (a4) at (9,0) {};
		\node[cross] (a5) at (12,0) {};
		\draw[thick] (a1)--(a2);
		\draw[thick, dashed] (a2)--(a3);
		\draw[thick] (a3)--(a4);
		\draw[doubleline] (a4)--(a5) node[pos=0.4, circle, fill, minimum size=1pt] (A){};
		\draw[thick, cap = round] (A)-- +(40:0.6);
		\draw[thick, cap = round] (A)-- +(-40:0.6);
	\end{tikzpicture}}
\newcommand{\DynkinB}{
	\begin{tikzpicture}[baseline={([yshift=-3pt]current bounding box.center)},scale=\scale]	
		\node[cross] (a1) at (0,0) {};
		\node[root] (a2) at (3,0) {};
		\node[root] (a3) at (6,0) {};
		\node[root] (a4) at (9,0) {};
		\node[root] (a5) at (12,0) {};
		\draw[thick] (a1)--(a2);
		\draw[thick, dashed] (a2)--(a3);
		\draw[thick] (a3)--(a4);
		\draw[doubleline] (a4)--(a5) node[pos=0.6, circle, fill, minimum size=1pt] (A){};
		\draw[thick, cap = round] (A)-- +(140:0.6);
		\draw[thick, cap = round] (A)-- +(-140:0.6);
	\end{tikzpicture}}
\newcommand{\DynkinEVI}{
	\begin{tikzpicture}[baseline={([yshift=-3pt]current bounding box.center)},scale=\scale]
		\node[cross] (a1) at (0,0) {};
		\node[root] (a2) at (3,0) {};
		\node[root] (a3) at (6,0) {};
		\node[root] (a4) at (6,2) {};
		\node[root] (a5) at (9,0) {};
		\node[root] (a6) at (12,0) {};
		\draw[thick] (a1)--(a2);
		\draw[thick] (a2)--(a3);
		\draw[thick] (a3)--(a4);
		\draw[thick] (a3)--(a5);
		\draw[thick] (a5)--(a6);
	\end{tikzpicture}}
\newcommand{\DynkinEVII}{
	\begin{tikzpicture}[baseline={([yshift=-3pt]current bounding box.center)},scale=\scale]
		\node[cross] (a1) at (0,0) {};
		\node[root] (a2) at (3,0) {};
		\node[root] (a3) at (6,0) {};
		\node[root] (a4) at (9,0) {};
		\node[root] (a5) at (9,2) {};
		\node[root] (a6) at (12,0) {};
		\node[root] (a7) at (15,0) {};
		\draw[thick] (a1)--(a2);
		\draw[thick] (a2)--(a3);
		\draw[thick] (a3)--(a4);
		\draw[thick] (a4)--(a5);
		\draw[thick] (a4)--(a6);
		\draw[thick] (a6)--(a7);
	\end{tikzpicture}}
\title[Nearly holomorphic sections]
{Nearly holomorphic sections\\on compact Hermitian symmetric spaces}
\author{Benjamin Schwarz} 
\keywords{Kähler manifold, Hermitian vector bundle, Hermitian symmetric space, invariant Cauchy--Riemann operator, nearly holomorphic section, Jordan pair, harmonic analysis, Plancherel formula, Cartan--Helgason theorem}
\subjclass[2010]{Primary 32A50; Secondary 22E46, 32M15, 32L10, 17C50.}
\address{Benjamin Schwarz, Universit\"{a}t Paderborn,
Fakult\"{a}t f\"{u}r Elektrotechnik, Informatik und Mathematik,
Institut f\"{u}r Mathematik, Warburger Str. 100,
33098 Paderborn, Germany}
\email{bschwarz@math.upb.de}
\begin{document}
\begin{abstract}
Let $X$ be a Kähler manifold, and $\sE$ be a Hermitian vector bundle on $X$. We investigate the space $\sN(X,\sE)$ of nearly holomorphic sections in $\sE$, which generalizes the notion of nearly holomorphic functions introduced by Shimura. If $X=U/K$ is a compact Hermitian symmetric space, and $\sE$ is $U$-homogeneous, it turns out that $\sN(X,\sE)$ coincides with the space of $U$-finite vectors in $C^\infty(X,\sE)$, and we obtain new results on the $U$-type decomposition of the Hilbert space of square integrable sections. As an application, we determine this decomposition for the holomorphic tangent space of $X$.
\end{abstract}

\maketitle


\section*{Introduction}
Let $(X,h)$ be an $n$-dimensional Kähler manifold, and $\sE$ be a holomorphic vector bundle on $X$. Let $C(X,\sE)$, $C^\infty(X,\sE)$ and $\sO(X,\sE)$ denote the space of continuous, smooth and holmomorphic sections in $\sE$. The notion of nearly holomorphic functions on $X$ was introduced by Shimura \cite{S86} in order to give a uniform description of certain automorphic forms on bounded symmetric domains. In its straightforward generalization to vector bundles, a smooth section $f\in C^\infty(X,\sE)$ is \emph{nearly holomorphic}, if it is locally given as a polynomial in $q_\ell(z):=\partial\Psi/\partial z_\ell$ ($\ell=1,\ldots, n$) with holomorphic coefficients, where $\Psi$ is a Kähler potential for $h$ on $\mathcal U\subseteq X$, and $z_1,\ldots, z_n$ are local (complex) coordinates on $\mathcal U$, i.e., for all $z\in\mathcal U$,
\begin{align}\label{eq:localterm}
	f(z) = \sum_{\b i\in\NN^n} f_\b i(z)\,q(z)^\b i
	\quad\text{with}\quad
	f_\b i\in\sO(\mathcal U,\sE_{\mathcal U})
\end{align}
and $f_\b i=0$ for almost all $\b i$. Here, we use the usual multi-index notation $q(z)^\b i=\prod_\ell q_\ell(z)^{i_\ell}$. Equivalently, $f$ is annihilated by a certain differential operator of order $m$, 
\[
	\iCR^m:C^\infty(X,\sE)\to C^\infty(X,\sE\otimes S_mT^{(1,0)}),
\]
which is called the \emph{$m$'th (iterated) invariant Cauchy--Riemann operator} (following \cite{EP96, PZ98, Zh02}), see Section~\ref{sec:InvCauchyRiem} for details. By definition,
\[
	\sN(X,\sE) := \bigcup_{m\geq 0}\sN^m(X,\sE)
	\quad\text{with}\quad
	\sN^m(X,\sE):=\ker\iCR^{m+1}\,.
\]
We note that $\sN^0(X,\sE) = \sO(X,\sE)$, the space of holomorphic sections, so
\[
	\sO(X,\sE)\subseteq\sN(X,\sE)\subseteq C^\infty(X,\sE).
\]
We propose the space of nearly holomorphic sections as the appropriate space for the discussion of questions concerning geometric analysis on Kähler manifolds. Whereas holomorphy is too restrictive and smoothness is too wide (loosing all information about the geometry), it seems that the definition of nearly holomorphic sections keeps the balance between rigidity and flexibility. Indeed, on the one hand, we show that the identity theorem holds for nearly holomorphic sections (Theorem~\ref{thm:IdTheorem}), and on the other hand, restricting to compact Hermitian symmetric spaces, we prove 

\begin{theoremAlpha}[see Theorem~\ref{thm:NearlyHolDens}]\label{thm:A}\ \\
	Let $X=U/K$ be a Hermitian symmetric space of compact type, and let $\sE=U\times_K E$ be an 
	irreducible $U$-homogeneous holomorphic vector bundle on $X$. If $\sN(X,\sE)$ is non-trivial, 
	then $\sN(X,\sE)$ is dense in $C(X,\sE)$ with respect to uniform convergence.
\end{theoremAlpha}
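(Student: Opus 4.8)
The plan is to show that, under the hypothesis, $\sN(X,\sE)$ coincides with the space $C^\infty(X,\sE)_{U\text{-fin}}$ of $U$-finite smooth sections; density then follows at once: since $U$ is compact and $X=U/K$, the Peter--Weyl theorem (in its form for sections of a homogeneous bundle) already gives that $C(X,\sE)_{U\text{-fin}}$ is uniformly dense in $C(X,\sE)$, and $U$-finite continuous sections are automatically smooth by elliptic regularity for the Casimir operator. So the substance is the inclusion $\sN(X,\sE)\supseteq C^\infty(X,\sE)_{U\text{-fin}}$, i.e.\ that every irreducible $U$-subrepresentation of $C^\infty(X,\sE)$ lies in $\sN(X,\sE)$.

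First I would record the consequences of $U$-invariance. Since $\iCR$ is $U$-equivariant (Section~\ref{sec:InvCauchyRiem}), so is each iterate $\iCR^{m+1}\colon C^\infty(X,\sE)\to C^\infty(X,\sE\otimes S_{m+1}T^{(1,0)})$; hence every $\sN^m(X,\sE)=\ker\iCR^{m+1}$ is a $U$-invariant subspace and, extending the infinitesimal action complex-linearly, a $\mathfrak g=\mathfrak u_{\mathbb C}$-submodule of $C^\infty(X,\sE)$, and so is $\sN(X,\sE)=\bigcup_m\sN^m(X,\sE)$. Moreover $\iCR^m$ induces a $U$-equivariant embedding $\sN^m(X,\sE)/\sN^{m-1}(X,\sE)\hookrightarrow\sO(X,\sE\otimes S_m T^{(1,0)})$, because on $\sN^m$ the section $\iCR^m f$ is killed by the invariant Cauchy--Riemann operator of $\sE\otimes S_mT^{(1,0)}$ and hence holomorphic, while $\ker\iCR^m=\sN^{m-1}$; as the right-hand side is finite-dimensional, induction on $m$ shows each $\sN^m(X,\sE)$ is finite-dimensional, whence $\sN(X,\sE)$ is a $\mathfrak g$-submodule of $C^\infty(X,\sE)_{U\text{-fin}}$. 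In particular, if $\sN(X,\sE)\ne0$ then some $\sN^{m_0}(X,\sE)$ is a non-zero finite-dimensional $U$-module and so contains an irreducible $U$-subrepresentation.

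The heart of the argument is to promote this single $U$-type to all those occurring in $C^\infty(X,\sE)$, for which I would combine two ingredients. The first is the surjectivity complementary to the embedding above: given a holomorphic section $\sigma$ of $\sE\otimes S_mT^{(1,0)}$ whose $U$-type also occurs in $C^\infty(X,\sE)$, one constructs a nearly holomorphic section of $\sE$ of degree $m$ whose $\iCR^m$-image is $\sigma$ --- locally by contracting $\sigma$ with $q^{\otimes m}$ built from the K\"ahler potential (in the notation of~\eqref{eq:localterm}), and then checking that the local representatives patch to a global smooth, hence nearly holomorphic, section, the identity theorem (Theorem~\ref{thm:IdTheorem}) serving to pin down the patching. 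The second is a computation of the $U$-types occurring in $\bigoplus_{m\ge0}\sO(X,\sE\otimes S_mT^{(1,0)})$ --- via Borel--Weil applied to the irreducible constituents of the homogeneous bundles $\sE\otimes S_mT^{(1,0)}$ together with the Cartan--Helgason theorem --- together with the assertion that the moment one of these spaces is non-zero, these $U$-types already exhaust the spectrum of $C^\infty(X,\sE)=\Ind_K^U E$; here the description of $X$ and of $T^{(1,0)}X$ through the associated Jordan pair is what keeps the weight bookkeeping for $S_mT^{(1,0)}$ manageable. Feeding these back through the $\mathfrak g$-invariance recorded above yields $\sN(X,\sE)=C^\infty(X,\sE)_{U\text{-fin}}$, and hence the density statement.

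I expect the combinatorial half of the last step --- that the bundles $\sE\otimes S_mT^{(1,0)}$ sweep out the entire $U$-spectrum of $C^\infty(X,\sE)$ as soon as they contribute a single holomorphic section --- to be the main obstacle, with the global patching in the surjectivity argument a close second; the Peter--Weyl reduction and the $\mathfrak g$-module bookkeeping are routine.
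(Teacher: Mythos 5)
Your reduction of the theorem to the equality $\sN(X,\sE)=C^\infty(X,\sE)_{U\text{-fin}}$ (Peter--Weyl plus elliptic regularity), and the bookkeeping showing each $\sN^m(X,\sE)$ is a finite-dimensional $U$-module with $\iCR^m$ embedding $\sN^m/\sN^{m-1}$ into $\sO(X,\sE\otimes S_mT^{(1,0)})$, are fine. But the two ingredients you place at the heart of the argument are exactly the hard content, and neither is established. First, the ``surjectivity'' step fails as sketched: the local candidate obtained by contracting a holomorphic section $\sigma$ of $\sE\otimes S_mT^{(1,0)}$ with $q^{\otimes m}$ depends on the choice of local K\"ahler potential, and a compact Hermitian symmetric space admits no global potential; on an overlap the two $q$-maps differ by $\partial g$ with $g$ holomorphic, so the two local candidates differ by nontrivial lower-order terms and do not glue. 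The identity theorem (Theorem~\ref{thm:IdTheorem}) only yields uniqueness of a global section if it exists; it cannot produce one. This local-to-global extension problem for nearly holomorphic sections is precisely what is deferred to the sequel, and in general the graded map $\sN^m/\sN^{m-1}\to\sO(X,\sE\otimes S_mT^{(1,0)})$ is \emph{not} onto, so one cannot simply lift every holomorphic $\sigma$ whose $U$-type occurs in $C^\infty(X,\sE)$. Second, the claim that the $U$-types of $\bigoplus_m\sO(X,\sE\otimes S_mT^{(1,0)})$ exhaust the spectrum of $\Ind_K^U E$ as soon as one of these spaces is nonzero is asserted, not proved; it is essentially the content of Theorem~\ref{thm:highestweights}, which in the paper is derived \emph{from} the density theorem via Proposition~\ref{prop:NearlyHolomorphicAndUFinite}, so taking it as input reverses the logical order and leaves the key step unsupported. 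In short, the proposal reduces the theorem to two statements that are at least as deep as the theorem itself.

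For comparison, the paper's proof is far more elementary and avoids both obstacles. By Lemma~\ref{lem:densityCrit}, transitivity of $U$ and irreducibility of $E$, a single nonzero element of the $U$-invariant space $\sN(X,\sE)$ makes $\Sigma=\sN(X,\sE)$ fiber-spanning, and since $\sN(X,\sE)$ is an $\sN(X)$-module (Corollary~\ref{cor:NearHolModule}) everything reduces to density of the nearly holomorphic \emph{functions} $\sN(X)$ in $C(X)$. That is then proved by exhibiting the explicit algebra $\Omega=\CC[\omega(\mf g)]$ with $\omega(Y)(u)=\kappa(Y,\Ad_uZ_0)$: Stone--Weierstrass gives density of $\Omega$ (conjugation invariance plus point separation from nondegeneracy of $\kappa$), and a Jordan-theoretic computation of $\omega(Y)|_{\mf n^+}$ shows each $\omega(Y)$ is nearly holomorphic of degree at most one. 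No representation-theoretic exhaustion or extension of local sections is needed; if you want to salvage your route, you would first need the paper's Theorem~A (or the sequel's solution of the extension problem) anyway.
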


In fact, in our subsequent paper \cite{S12} we show that in this setting, $\sN(X,\sE)$ is non-trivial for all $U$-homogeneous holomorphic vector bundles, hence $\sN(X,\sE)$ is always dense in $C(X,\sE)$.

Even though we are able to prove that the decomposition \eqref{eq:localterm} of a nearly holomorphic section $f\in\sN(X,\sE)$ is uniquely determined by the chosen Kähler potential (see Proposition~\ref{prop:LocalDescription}), it is not clear how to determine the corresponding coefficient sections $f_\b i$. For compact Hermitian symmetric spaces, we solve this problem by using a particular Kähler potential, and the solution is given by a \emph{generalized Taylor expansion} formula, see Section~\ref{sec:taylor} for details.\\

For a first application of nearly holomorphic sections to harmonic analysis on $X=U/K$, we fix a $U$-invariant Hermitian structure on $\sE = U\times_K E$. Let $\mf u$ be the Lie algebra of $U$, and fix a Borel subalgebra $\mf b\subseteq u_\CC$ of the complexified Lie algebra with corresponding Cartan subalgebra $\mf h\subseteq\mf b$. Let $\Lambda\subseteq\mf h^*$ parametrize the isomorphism classes of irreducible representations of $U$ by their highest weights (with respect to $\mf b$), and let $V_\lambda$ denote a representative for $\lambda\in\Lambda$. Then, the space of $L^2$-sections in $\sE$ decomposes under the action of $U$ into a Hilbert sum
\begin{align*}
	L^2(X,\sE) = \widehat{\bigoplus_{\lambda\in\Lambda}}\; W_\lambda^\sE
\end{align*}
of $U$-isotypic components $W_\lambda^\sE\cong m_\lambda^\sE\cdot V_\lambda$, where $m_\lambda^\sE\geq 0$ denotes the multiplicity of $V_\lambda$ in $L^2(X,\sE)$. One of the fundamental problems in harmonic analysis is to determine the multiplicities $m_\lambda^\sE$ in an explicit manner. Frobenius reciprocity yields that
\begin{align}\label{eq:FrobRecip}
	\Hom_U(V_\lambda, L^2(X,\sE))\cong\Hom_K(V_\lambda,E)\,,
\end{align}
where on the right hand side $V_\lambda$ and $E$ are considered as $K$-modules. In particular, this shows that $m_\lambda^\sE=\dim\Hom_U(V_\lambda, L^2(X,\sE))$ is finite, and $L^2(X,\sE)$ contains precisely those $U$-types, which itself contain $E$ as a $K$-type. Classically, this correspondence is used to determine the multiplicities $m_\lambda^\sE$ in special cases. For the trivial line bundle, the Cartan--Helgason theorem gives an explicit characterization of the highest weights with positive multiplicity (spherical representations), and multiplicity freeness (i.e., $m_\lambda^\sE\leq 1$ for all $\lambda$) is obtained by an investigation of spherical vectors \cite{He84}. For general line bundles, Schlichtkrull \cite{Sch84} obtains a generalization of the Cartan--Helgason theorem, which also proves multiplicity freeness. See also \cite{Sh94}. In the case of higher rank vector bundles, little seems to be known. Even though a Cartan--Helgason like theorem for arbitrary $K$-types is proved by Kostant \cite{Kos04}, it remains a non-trivial problem to derive the $U$-type decomposition of $L^2(X,\sE)$ from this result. In \cite{Ca05b, Ca05a}, Camporesi applies this approach to compact Riemannian symmetric spaces of rank one (certainly including Hermitian symmetric spaces of rank one), and obtains a general description of the set of highest weights occurring in $L^2(X,\sE)$, but without determining the precise multiplicities.\\

Instead of using Frobenius reciprocity, we prove a new correspondence between $U$-types in $L^2(X,\sE)$ and $K$-types in a certain $K$-module by an investigation of nearly holomorphic sections. In order to formulate our result, recall that $\mf u_\CC$ admits the grading $\mf u_\CC = \mf n^+\oplus\mf k_\CC\oplus\mf n^-$, where $\mf n^\pm$ are $K$-invariant abelian subalgebras, and $\mf k$ is the Lie algebra of $K\subseteq U$. We may assume that the Borel subalgebra $\mf b\subseteq\mf u_\CC$ is chosen such that $\mf n^+\subseteq\mf b$, and $\mf b':=\mf b\cap\mf k_\CC$ is a Borel subalgebra of $\mf k_\CC$. Hence, $\mf h\subseteq\mf k_\CC$ and $\mf h$ is also a Cartan subalgebra of $\mf k_\CC$. Recall that for all $\lambda\in\Lambda$, the action of $K$ on the subspace $V_\lambda^{\mf n^+}\subseteq V_\lambda$ of $\mf n^+$-invariants,
\[
	V_\lambda^{\mf n^+} := \Set{v\in V_\lambda}{Y.v = 0\text{ for all }Y\in\mf n^+},
\]
is irreducible with highest weight $\lambda$. Let $S\mf n^+$ denote the symmetric algebra of $\mf n^+$, equipped with the adjoint action of $K$.

\begin{samepage}
\begin{theoremAlpha}[see Theorem~\ref{thm:highestweights}]\label{thm:B}\ \\
	Let $X=U/K$ be an irreducible Hermitian symmetric space of compact type, and let 
	$\sE=U\times_KE$ be an irreducible $U$-homogeneous Hermitian vector bundle. If 
	$\sO(X,\sE)\neq\{0\}$, then
	\begin{align}\label{eq:UKisomorphism}
		\Hom_U(V_\lambda,L^2(X,\sE))\cong\Hom_K(V_\lambda^{\mf n^+},E\otimes S\mf n^+)
	\end{align}
	for all $\lambda\in\Lambda$. Moreover, any $K$-type in $E\otimes S\mf n^+$ is isomorphic to 
	$V_\lambda^{\mf n^+}$ for some $\lambda\in\Lambda$,	hence there is a bijection between $U$-types 
	in $L^2(X,\sE)$ and $K$-types in $E\otimes S\mf n^+$.
\end{theoremAlpha}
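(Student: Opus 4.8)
The plan is to compute the multiplicities in $L^2(X,\sE)$ by replacing that space with the dense subspace $\sN(X,\sE)$ of nearly holomorphic sections, to decompose $\sN(X,\sE)$ along its filtration by the order of the iterated invariant Cauchy--Riemann operator, and to evaluate the resulting spaces of \emph{holomorphic} sections via the Borel--Weil theorem. For the first step, recall that $\sN(X,\sE)$ is the space of $U$-finite vectors in $C^\infty(X,\sE)$; since $X$ is compact, every $U$-finite vector of $L^2(X,\sE)$ is smooth, so $\sN(X,\sE)$ is exactly the $U$-finite part of $L^2(X,\sE)$, and as $\sO(X,\sE)\neq\{0\}$ implies $\sN(X,\sE)\neq\{0\}$, Theorem~\ref{thm:A} shows $\sN(X,\sE)$ is dense in $L^2(X,\sE)$. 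Hence $\Hom_U(V_\lambda,L^2(X,\sE))=\Hom_U(V_\lambda,\sN(X,\sE))$ for every $\lambda$. Since each $\sN^m(X,\sE)=\ker\iCR^{m+1}$ is a $U$-submodule ($\iCR^{m+1}$ being $U$-equivariant), the increasing filtration $\sO(X,\sE)=\sN^0\subseteq\sN^1\subseteq\cdots$ of the semisimple $U$-module $\sN(X,\sE)$ splits, so $\sN(X,\sE)\cong\bigoplus_{m\ge0}\sN^m(X,\sE)/\sN^{m-1}(X,\sE)$ as $U$-modules, with $\sN^{-1}:=0$.

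For the second step, the definition of the iterated operator is such that $f\in\sN^m(X,\sE)=\ker\iCR^{m+1}$ implies $\iCR^mf\in\ker\iCR=\sO(X,\sE\otimes S_mT^{(1,0)})$, while $\ker\iCR^m=\sN^{m-1}(X,\sE)$ (Section~\ref{sec:InvCauchyRiem}); thus $\iCR^m$ induces a $U$-equivariant injection
\[
  \sN^m(X,\sE)/\sN^{m-1}(X,\sE)\ \hookrightarrow\ \sO\bigl(X,\ \sE\otimes S_mT^{(1,0)}\bigr).
\]
Using $T^{(1,0)}\cong U\times_K\mf n^+$ one has $\sE\otimes S_mT^{(1,0)}\cong U\times_K(E\otimes S^m\mf n^+)$, and since $X$ is a flag manifold the Borel--Weil theorem gives, for every finite-dimensional $K$-module $W$,
\[
  \Hom_U\bigl(V_\lambda,\ \sO(X,U\times_KW)\bigr)\ \cong\ \Hom_K\bigl(V_\lambda^{\mf n^+},\ W\bigr),
\]
since the irreducible $K$-constituents of $W$ with $U$-dominant highest weight $\mu$ contribute $V_\mu$ (with $V_\mu^{\mf n^+}$ that constituent) to the space of holomorphic sections and the others contribute nothing, while $\Hom_K(V_\lambda^{\mf n^+},W)$ counts exactly the constituents isomorphic to the irreducible module $V_\lambda^{\mf n^+}$. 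Combining the injection with this identification and summing over $m$ already yields $\dim\Hom_U(V_\lambda,L^2(X,\sE))\le\dim\Hom_K(V_\lambda^{\mf n^+},E\otimes S\mf n^+)$, both sides finite.

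The step I expect to be the main obstacle is the reverse inequality: showing the injection above is onto, equivalently that every holomorphic section of $\sE\otimes S_mT^{(1,0)}$ is $\iCR^mf$ for some $f\in\sN^m(X,\sE)$. Here I would use the generalized Taylor expansion of Section~\ref{sec:taylor}: with the distinguished Kähler potential coming from the Jordan-pair structure of $X$, one should be able to write down, from a prescribed holomorphic section $g$ of $\sE\otimes S_mT^{(1,0)}$, an explicit nearly holomorphic section of $\sE$ of order $m$ whose $\iCR^m$-image is a nonzero multiple of $g$; the work is to verify that this ansatz is globally well defined and recovers $g$ on the top-order term, which should follow from the uniqueness and $U$-covariance of that expansion. (Via Frobenius reciprocity \eqref{eq:FrobRecip} this surjectivity is the same as the purely $K$-theoretic identity $\Hom_K(V_\lambda,E)\cong\Hom_K(V_\lambda^{\mf n^+},E\otimes S\mf n^+)$.) Granting it, $\sN^m/\sN^{m-1}\cong\sO(X,\sE\otimes S_mT^{(1,0)})$ for all $m$, and combining the $U$-module splitting of $\sN(X,\sE)$, the Borel--Weil identification, and $\Hom_K(V_\lambda^{\mf n^+},E\otimes S\mf n^+)=\bigoplus_m\Hom_K(V_\lambda^{\mf n^+},E\otimes S^m\mf n^+)$ produces \eqref{eq:UKisomorphism}.

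For the ``moreover'' part, by \eqref{eq:UKisomorphism} and the injectivity of $\lambda\mapsto V_\lambda^{\mf n^+}$ on $\Lambda$ it suffices to show that the highest weight of every irreducible $K$-constituent of $E\otimes S\mf n^+$ is $U$-dominant. Let $\alpha_0$ be the unique simple root of $\mf u_\CC$ outside $\mf k_\CC$, so that a $\mf k$-dominant weight is $U$-dominant exactly when it pairs non-negatively with $\alpha_0^\vee$. Because $X$ is irreducible Hermitian symmetric the associated parabolic is cominuscule, so every root of $\mf u_\CC$ occurring in $\mf n^+$ has $\alpha_0$-coefficient $1$; such a root $\beta$ must then satisfy $\langle\beta,\alpha_0^\vee\rangle\ge0$, for otherwise $s_{\alpha_0}\beta=\beta-\langle\beta,\alpha_0^\vee\rangle\alpha_0$ would be a root with $\alpha_0$-coefficient $\ge2$; hence every weight of $S\mf n^+$ pairs non-negatively with $\alpha_0^\vee$. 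Since $\sO(X,\sE)\neq\{0\}$, Borel--Weil forces $E\cong V_{\lambda_0}^{\mf n^+}$ with $\lambda_0\in\Lambda$, and as $V_{\lambda_0}^{\mf n^+}$ is irreducible over $\mf k_\CC$ with highest weight $\lambda_0$ its weights are $\lambda_0$ minus non-negative combinations of compact simple roots; since $\langle\lambda_0,\alpha_0^\vee\rangle\ge0$ and $\langle\alpha_i,\alpha_0^\vee\rangle\le0$ for compact simple $\alpha_i$, these weights also pair non-negatively with $\alpha_0^\vee$. Therefore every weight of $E\otimes S\mf n^+$ does, and in particular the highest weight of each $K$-constituent, being moreover $\mf k$-dominant, is $U$-dominant. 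The bijection $\lambda\leftrightarrow V_\lambda^{\mf n^+}$ between $U$-types in $L^2(X,\sE)$ and $K$-types in $E\otimes S\mf n^+$ then follows from \eqref{eq:UKisomorphism}.
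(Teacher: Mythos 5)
Your overall skeleton (density of $\sN(X,\sE)$ and its identification with the $U$-finite vectors, the $U$-stable filtration $\sN^m$, the injection $\sN^m/\sN^{m-1}\hookrightarrow\sO(X,\sE\otimes S_mT^{(1,0)})$ induced by $\iCR^m$, and Borel--Weil applied fibrewise) is sound and is genuinely different from the paper's route, which instead embeds all of $\sN(X,\sE)$ into $\sO(\Vp,E)[\Vm]$ via the $q$-expansion and analyzes the $\mf n^+$-invariants directly. Your ``moreover'' argument via cominusculity and the pairing with $\alpha_0^\vee$ is also fine. The problem is that the one step you flag as ``the main obstacle'' -- surjectivity of $\iCR^m:\sN^m(X,\sE)\to\sO(X,\sE\otimes S_mT^{(1,0)})$, equivalently the branching identity $\Hom_K(V_\lambda,E)\cong\Hom_K(V_\lambda^{\mf n^+},E\otimes S\mf n^+)$ -- is exactly the substance of Theorem~\ref{thm:highestweights}, and you do not prove it; you only sketch an ansatz. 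Moreover, the tool you propose cannot do the job as described: the generalized Taylor formula of Proposition~\ref{prop:TaylorSeries} runs in the wrong direction (it recovers the coefficients of an \emph{already given} global nearly holomorphic section), whereas what you need is to \emph{produce} a global nearly holomorphic section of $\sE$ with prescribed top-order holomorphic part. That global existence/extension question is precisely the nontrivial ``extension problem'' the paper defers to \cite{S12}, and it is not a formal consequence of uniqueness and $U$-covariance of the expansion.

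A concrete symptom of the gap: your sketched construction makes no use of the hypothesis $\sO(X,\sE)\neq\{0\}$ at this step, yet the hypothesis must enter there. For general irreducible bundles one only has an embedding $\Hom_U(V_\lambda,L^2(X,\sE))\hookrightarrow\Hom_K(V_\lambda^{\mf n^+},E\otimes S\mf n^+)$, and by Frobenius reciprocity its image is a proper subspace in general (this is why \cite{S12} replaces $E\otimes S\mf n^+$ by a subspace $S(E)$); so any argument that would yield surjectivity without invoking a holomorphic section is necessarily flawed. The paper closes this gap by a mechanism you would need to reproduce or replace: the holomorphic highest weight section of $\sE$ has \emph{constant} local polynomial, the algebra $\sN(X)$ is generated by the explicit Jordan-theoretic functions $q_\ell$, $\overline{q_k}$, $d_{\ell k}$ (Corollary~\ref{cor:nearlyholomorphicgenerators}, which itself rests on the computation of $\omega(Y)$ and on $\sN(X)=\Omega$), and the module property $\sN(X)\cdot\sO(X,\sE)\subseteq\sN(X,\sE)$ (Corollary~\ref{cor:NearHolModule}) then shows every polynomial in $\Poly(\Vm,E)$ is realized by a global nearly holomorphic section. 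Until you supply an argument of this kind (or an independent proof of the $K$-theoretic identity), your proposal establishes only the inequality $\dim\Hom_U(V_\lambda,L^2(X,\sE))\leq\dim\Hom_K(V_\lambda^{\mf n^+},E\otimes S\mf n^+)$, not the theorem.
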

\end{samepage}

We note that due to the Borel-Weil Theorem, $\sO(X,\sE)$ is non-trivial, if and only if the $K$-type $E$ is isomorphic to $V_\lambda^{\mf n^+}$ for some $\lambda\in\Lambda$. This shows that the non-triviality of $\sO(X,\sE)$ is essential for the result of Theorem~\ref{thm:B}. However, for more general vector bundles, it is still true that $\Hom_U(V_\lambda,L^2(X,\sE))$ embeds into $\Hom_K(V_\lambda^{\mf n^+},E\otimes S\mf n^+)$ for all $\lambda\in\Lambda$, see Theorem~\ref{thm:highestweights}.

The basic idea underlying Theorem~\ref{thm:B} is to consider the isomorphism
\[
	\Hom_U(V_\lambda,L^2(X,\sE))\stackrel{\cong}{\longrightarrow}\Hom_K(V_\lambda^{\mf n^+},L^2(X,\sE)^{\mf n^+}),\
	T\mapsto T\at{V_\lambda^{\mf n^+}},
\]
and to replace $L^2(X,\sE)^{\mf n^+}$ on the right hand side by a more accessible representation of $K$. Theorem~\ref{thm:B} states, that if $\sE$ admits non-trivial holomorphic sections, then $E\otimes S\mf n^+$ is the right replacement. This is motivated by two crucial observations. Firstly, as a consequence of Theorem~\ref{thm:A}, the space of nearly holomorphic sections coincides with the space of $U$-finite vectors in $L^2(X,\sE)$ (see Proposition~\ref{prop:NearlyHolomorphicAndUFinite}), i.e.,
\begin{align*}
	\sN(X,\sE) = \bigoplus_{\lambda\in\Lambda} W_\lambda^\sE.
\end{align*}
Therefore, all sections in $L^2(X,\sE)$ that are of interest for the discussion of the $U$-type decomposition are nearly holomorphic. Secondly, considering the expansion of nearly holomorphic sections as in \eqref{eq:localterm} on $\mf n^+\subseteq X$ (holomorphically embedded via the exponential map), it turns out that $\mf n^+$-invariant sections correspond to expansions with \emph{constant} coefficient sections $f_\b i$ (if the Kähler potential is chosen properly). This yields a $K$-equivariant embedding of $\mf n^+$-invariant nearly holomorphic sections into $E\otimes S\mf n^+$. A detailed analysis of nearly holomorphic functions then shows that this is in fact an isomorphism of $K$-modules, see Corollary~\ref{cor:nearlyholomorphicgenerators} which is essential for this part of the proof of Theorem~\ref{thm:B}.\\

We list some applications of Theorem~\ref{thm:B}.
\begin{enumerate}[(i)]
	\item For the trivial line bundle, Theorem~\ref{thm:B} states that the Cartan--Helgason theorem	
				(applied to Hermitian symmetric spaces of compact type) is equivalent to the decomposition 
				of the symmetric algebra $S\mf n^+$ under the action of $K$, which is 
				well-known due to the work of Hua (classical, \cite{Hu63}), Kostant (unpublished), and 
				Schmid \cite{Sc69}. In fact, this equivalence has been the original motivation for our 
				investigation. Since the isomorphism \eqref{eq:UKisomorphism} is explicitly given, we are 
				able to translate known formulas for highest weight vectors in $S\mf n^+$ (due to Upmeier 
				\cite{Up86}) to explicit formulas for highest weight vectors in $L^2(X)$, see 
				Remark~\ref{rmk:ExplicitHighestWeights}.
	\item For general line bundles which admit holomorphic sections, Theorem~\ref{thm:B}
				recovers Schlichtkrull's result \cite{Sch84} generalizing the Cartan--Helgason theorem, 
				see Remark~\ref{rmk:linebundles}.
	\item To demonstrate the use of our result for vector bundles of higher rank, we  
				obtain the precise decomposition of the holomorphic tangent bundle. We note that this 
				decomposition is not multiplicity free (except for rank 1), see
				Theorem~\ref{thm:tangentdecomp}.
	\item For general vector bundles, an immediate consequence of Theorem~\ref{thm:B} is that the 
				multiplicities $m_\lambda^\sE$ are uniformly bounded by the dimension of the fiber 
				$E$, see Corollary~\ref{cor:weightdescription}. In the setting of bounded symmetric 
				domains, similar results have been proved by T.~Kobayashi \cite{Ko08}.
\end{enumerate}

\subsection*{Further remarks}
Even though nearly holomorphic sections are defined on any Kähler manifold, they have been studied in detail only in the Hermitian symmetric case (to the best of our knowledge), and even in this setting, almost all results are concerned with line bundles on bounded symmetric domains (having a global Kähler potential), see \cite{S86,S87,Zh02}. Our result applied to line bundles can be considered as dual to the results obtained by Zhang in \cite{Zh02}, where nearly holomorphic functions are used to describe relative discrete series of weighted $L^2$-spaces on bounded symmetric domains. However, we note that since compact Hermitian symmetric spaces do not admit global Kähler potentials, it is non-trivial to transfer local to global properties.

In \cite{PZ93}, Peetre--Zhang cover the case of line bundles on the Riemann sphere, and in \cite{S87} Shimura obtains for the classical compact Hermitian symmetric spaces (by a case-by-case analysis) explicit formulas for generators of the algebra of nearly holomorphic functions. Using Jordan theory, we obtain a uniform description of these generators which also applies to the exceptional Hermitian symmetric spaces, see Corollary~\ref{cor:nearlyholomorphicgenerators}. The iterated invariant Cauchy--Riemann operators are discussed in more generality by Engli\v{s} and Peetre in \cite{EP96}, however we note that their results are not used in our proofs.

\subsection*{Outlook}
Both of our main results so far rely on the existence of non-trivial (nearly) holomorphic sections. In our subsequent paper \cite{S12} we show that in case of Theorem~\ref{thm:A}, this assumption can be dropped, since the space of nearly holomorphic sections turns out to be non-trivial for all $U$-homogeneous vector bundles. Therefore, $\sN(X,\sE)$ is always dense in $C(X,\sE)$ with respect to uniform convergence. We also improve Theorem~\ref{thm:B} by characterizing a $K$-invariant subspace $S(E)\subseteq E\otimes S\mf n^+$ such that Theorem~B holds for all $U$-homogeneous vector bundles if $E\otimes S\mf n^+$ is replaced by $S(E)$. Both results are obtained by solving the extension problem for nearly holomorphic sections, i.e., by characterizing those local nearly holomorphic sections $f\in\sN(\mf n^+,\sE_{\mf n^+})$ (with $\mf n^+$ considered as an open and dense subset of $X$) which extend to global nearly holomorphic sections.

An open problem is the generalization of our results to non-symmetric Kähler manifolds. As a first step, one might consider generalized flag varieties $X=G/Q$ where $G$ is the same complex Lie group as above and $Q$ is any parabolic subgroup of $G$. In this case, the grading of $\mf u_\CC$ corresponding to $Q$ becomes more complicated, and Jordan theoretic arguments used in our investigation must be replaced by corresponding Lie theoretic arguments. We note that beyond the Hermitian symmetric case there are different $U$-invariant Kähler metrics defined on $X$, and the Killing metric is not Kähler in general. However, there is a unique Kähler--Einstein metric, see e.g.\ \cite{A06}.

\subsection*{Organisation}
This paper is organized as follows. In Section~\ref{sec:NearlyHolSections} recall basic properties of the invariant Cauchy--Riemann operator for holomorphic vector bundles on general Kähler manifolds and introduce the notion of nearly holomorphic sections. We further investigate the local characterization of these sections and prove the identity theorem as well as the finite dimensionality of $\sN^m(X,\sE)$ on compact Kähler manifolds. Starting with Section~\ref{sec:NearHolOnHermSym}, we confine to $U$-homogeneous Hermitian vector bundles on Hermitian symmetric spaces of compact type, $X=U/K$. Here, we prove Theorem~\ref{thm:A} and explicitly describe generators for the algebra of nearly holomorphic functions. We also recall the notions from Jordan theory used in the proofs. Section~\ref{sec:RepTheoApp} is devoted to the discussion of the $U$-type decomposition of $L^2(X,\sE)$. We first link nearly holomorphic sections to this problem by showing that $\sN(X,\sE)$ coincides with the space of $U$-finite vectors in $L^2(X,\sE)$ (assuming $\sN(X,\sE)$ is non-trivial). Then, Theorem~\ref{thm:B} and various consequences are proved. In particular, the decomposition problem is solved for the case of the holomorphic tangent bundle. In Section~\ref{sec:taylor}, we prove a generalized Taylor expansion formula for nearly holomorphic sections. For convenience to the reader, the appendix provides some data of the classification of Hermitian symmetric spaces.\\[2mm]
\noindent
\emph{Acknowledgment:} I would like to thank Joachim Hilgert, Jan Möllers, and Henrik Seppänen for helpful discussions on the topic of this paper. I am also grateful to Bent \char216rsted for bringing Zhang's paper \cite{Zh02} to my attention, and hence initiating this work on nearly holomorphic sections.
\newpage

\section{Invariant Cauchy--Riemann operators and nearly holomorphic sections}\label{sec:NearlyHolSections}
Throughout this section, let $(X,h)$ be a (possibly non-compact) Kähler manifold of complex dimension $n$. For $x\in X$, the complexification of the real tangent space $T_x$ is denoted by $T_x^\CC$, and its decomposition into the holomorphic and the anti-holomorphic part is $T_x^\CC = T_x^{(1,0)}\oplus T_x^{(0,1)}$. The Kähler metric, $h$,  induces a smooth bundle isomorphism of the antiholomorphic cotangent bundle and the holomorphic tangent bundle, denoted by
\[
 \eta:(T^{(0,1)})^*\stackrel{\cong}{\longrightarrow} T^{(1,0)}. 
\]
Let $\sE$ be a holomorphic vector bundle on $X$, and let $C(X,\sE)$, $C^\infty(X,\sE)$, and $\sO(X,\sE)$, denote the space of continuous, smooth, and holomorphic sections in $\sE$, respectively. If, in addition, $H$ is an Hermitian structure on $\sE$, let $L^2(X,\sE)$ denote the Hilbert space of square integrable sections. Let
\[
	\delbar: C^\infty(X,\sE)\to C^\infty(X,\sE\otimes (T^{(1,0)})^*)
\]
denote the usual Dolbeault operator of $\sE$ defined by trivializations.

\subsection{Invariant Cauchy--Riemann operator}\label{sec:InvCauchyRiem}
The \emph{invariant Cauchy--Riemann operator} is defined by (cf.\ \cite{EP96})
\[
	\iCR_\sE := \eta^*\circ\delbar: C^\infty(X,\sE)\to C^\infty(X,\sE\otimes T^{(1,0)})\;,
\]
where $\eta^*:C^\infty(X,\sE\otimes (T^{(0,1)})^*)\to C^\infty(X,\sE\otimes T^{(1,0)})$ denotes the isomorphism induced by $\eta$. If $(z^1,\ldots, z^n)$ are coordinates on an open subset $\sU\subseteq X$, and $(e_\alpha)_{\alpha=1,\ldots,N}$ is a system of local trivializing holomorphic sections of $\sE_\sU$ ($=\pi^{-1}(\sU)$ where $\pi:\sE\to X$ denotes the canonical projection), then $\iCR_\sE$ is locally given by
\begin{align}\label{eq:localiCR}
	\iCR_\sE(\sum_\alpha f^\alpha e_\alpha)
	= \sum_{i,j,\alpha}h^{\bar j i}
		\frac{\partial f^\alpha}{\partial\bar z^j}\,e_\alpha\otimes\del_i\,,
\end{align}
where $\del_i := \tfrac{\partial}{\partial z^i}$ is the standard basis of the holomorphic tangent space, and $h^{\bar j i}$ is the matrix inverse of the Kähler metric coefficients $h_{i\bar j}:=h(\del_i,\delbar_j)$.

Since $\sE\otimes T^{(1,0)}$ is again a holomorphic vector bundle on $X$, the invariant Cauchy--Riemann operator can be iterated, i.e., we set $\iCR_\sE^1 := \iCR_\sE$ and define inductively
\[
	\iCR_\sE^m:=\iCR_{\sE\otimes (T^{(1,0)})^{\otimes (m-1)}}\circ\iCR_\sE^{m-1}
\]
for $m>1$. By abuse of notation, we omit the respective index referring to the vector bundle and simply write
\[
	\iCR^m =\iCR\circ\cdots\circ\iCR:
	C^\infty(X,\sE)\to C^\infty(X,\sE\otimes(T^{(1,0)})^{\otimes m})\,.
\]
We recall some basic properties.

\begin{samepage}
\begin{proposition}[\cite{EP96,PZ98}]\label{prop:BasisPropsNearHol}\
	\begin{enumerate}
		\item \emph{Invariance}.\\
					If $g$ is a metric preserving biholomorphic map on $X$ that lifts to a holomorphic 
					bundle isomorphism $\tilde g$ on $\sE$, then
					\[
						\iCR(\tilde g^* f) = (\tilde g^*\otimes dg^*)\iCR f\;,
					\]
					where $\tilde g^*$ and $dg^*$ denote the induced actions on sections of 
					$\sE$ and $T^{(1,0)}$, respectively.\vspace{2mm}
		\item \emph{Symmetry}.\\
					The iterate $\iCR^m$ of $\iCR$ maps $C^\infty(X,\sE)$ into 
					$C^\infty(X,\sE\otimes S_mT^{(1,0)})$, where
					$S_mT^{(1,0)}\subseteq (T^{(1,0)})^{\otimes m}$ denotes the subbundle of symmetric 
					tensors.
	\end{enumerate}
\end{proposition}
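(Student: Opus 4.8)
The plan is to treat the two parts separately; part~(1) is a formal consequence of the naturality of the two ingredients $\delbar$ and $\eta$ out of which $\iCR$ is built, whereas part~(2) reduces, after a good choice of coordinates, to the symmetry of iterated commuting partial derivatives. For part~(1): the Dolbeault operator is intrinsic to the complex structure of $X$ and the holomorphic structure of $\sE$, hence natural with respect to the pair $(g,\tilde g)$; working in a local holomorphic trivialization, where $\delbar(\sum_\alpha f^\alpha e_\alpha) = \sum_{j,\alpha}(\partial f^\alpha/\partial\bar z^j)\,e_\alpha\otimes d\bar z^j$ and $\tilde g^*$ is composition with a holomorphic transition matrix, one checks $\delbar(\tilde g^* f) = (\tilde g^*\otimes(\overline{dg})^*)\,\delbar f$, where $(\overline{dg})^*$ is the induced pullback on $(0,1)$-forms. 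On the other hand, $\eta$ is manufactured solely from the K\"ahler metric $h$ and the complex structure, both of which $g$ preserves; hence $\eta$ is $g$-equivariant, which after transposition reads $\eta^*\circ(\tilde g^*\otimes(\overline{dg})^*) = (\tilde g^*\otimes dg^*)\circ\eta^*$ on $\sE$-valued $(0,1)$-forms. Composing the two identities yields $\iCR(\tilde g^* f) = \eta^*\delbar(\tilde g^* f) = (\tilde g^*\otimes dg^*)\,\eta^*\delbar f = (\tilde g^*\otimes dg^*)\,\iCR f$; iterating this, applied successively to the bundles $\sE\otimes(T^{(1,0)})^{\otimes(m-1)}$ on which $g$ acts compatibly, gives the analogous identity for $\iCR^m$.

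For part~(2): since $\iCR^m f$ is a globally defined section of $\sE\otimes(T^{(1,0)})^{\otimes m}$, independent of all local choices, it suffices to show that for each fixed $x_0\in X$ its value at $x_0$ lies in $\sE_{x_0}\otimes S_m T^{(1,0)}_{x_0}$. I would choose holomorphic coordinates $(z^1,\dots,z^n)$ centered at $x_0$ that are K\"ahler normal (Bochner) coordinates, so that all pure holomorphic --- hence, by conjugation, all pure anti-holomorphic --- derivatives $\partial_{\bar l_1}\!\cdots\partial_{\bar l_a}h_{i\bar j}$ of order $a\geq 1$ vanish at $x_0$; differentiating the identity $\sum_i h^{\bar j i}h_{i\bar k} = \delta^j_k$ and inducting on $a$ then gives the same vanishing for the inverse coefficients $h^{\bar j i}$. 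Next I would iterate the local formula~\eqref{eq:localiCR} --- legitimate because each $\sE\otimes(T^{(1,0)})^{\otimes(m-1)}$ carries the holomorphic frame $e_\alpha\otimes\partial_{i_1}\otimes\cdots\otimes\partial_{i_{m-1}}$, so that only $\bar z$-derivatives and multiplications by the functions $h^{\bar j i}$ occur --- and expand $\iCR^m f$ into a sum of monomials, each the product of (differentiated or undifferentiated) factors $h^{\bar j i}$ with one $\bar z$-multi-derivative of some $f^\alpha$, carrying exactly $m$ anti-holomorphic derivatives in total. By the vanishing just established, the only monomial that survives at $x_0$ is the one in which every $h^{\bar j i}$ is undifferentiated and all $m$ derivatives fall on $f^\alpha$; hence, with $\mathbf k = (k_1,\dots,k_m)$ and $\mathbf l = (l_1,\dots,l_m)$,
\[
	\iCR^m f(x_0) = \sum_{\alpha,\mathbf k,\mathbf l}\Bigl(\prod_{r=1}^m h^{\bar l_r k_r}(x_0)\Bigr)\,\partial_{\bar l_1}\!\cdots\partial_{\bar l_m}f^\alpha(x_0)\;e_\alpha\otimes\partial_{k_1}\otimes\cdots\otimes\partial_{k_m}.
\]
This is symmetric in $k_1,\dots,k_m$: the partials $\partial_{\bar l_r}$ commute, so $\partial_{\bar l_1}\!\cdots\partial_{\bar l_m}f^\alpha$ is symmetric in $\mathbf l$, and since each $h^{\bar l_r k_r}$ pairs $l_r$ with $k_r$, any permutation of the $k$'s is absorbed by relabelling the summation over $\mathbf l$. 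As $x_0$ was arbitrary, $\iCR^m f\in C^\infty(X,\sE\otimes S_m T^{(1,0)})$.

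The delicate point is part~(2): in an arbitrary coordinate system the higher iterates of $\iCR$ produce correction terms involving anti-holomorphic derivatives of the $h^{\bar j i}$ (arising when $\delbar$ is applied after the metric-dependent index raising $\eta^*$), and one must verify that these corrections are themselves symmetric --- which is exactly where the K\"ahler identity $\partial_k h_{i\bar j} = \partial_i h_{k\bar j}$ enters. Passing to K\"ahler normal coordinates is the device that makes every such correction vanish at the point under consideration, thereby reducing the statement to the trivial symmetry above; without it one would instead run a direct induction on $m$, invoking the K\"ahler identities repeatedly, which is feasible but combinatorially more involved.
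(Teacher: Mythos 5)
Your proof is correct, and it is worth noting that the paper itself gives no proof of this proposition: it is quoted from Engli\v{s}--Peetre and Peng--Zhang, with the remark that the symmetry part also follows from Lemma~2.0 of Shimura's paper, where it is obtained by showing that the first-order operators $\iCR_\ell=\sum_k h^{\bar k\ell}\partial_{\bar k}$ commute, using the K\"ahler identity $\partial_k h_{i\bar j}=\partial_i h_{k\bar j}$ and its consequences (this commutativity is exactly what the paper later re-uses in the proof of Proposition~\ref{prop:LocalDescription}). Your argument is therefore a genuinely self-contained alternative: part~(1) by naturality of $\delbar$ and $h$-equivariance of $\eta$ is the standard and correct argument, and for part~(2) the reduction to a pointwise statement in K\"ahler normal (Bochner) coordinates replaces the combinatorial use of the K\"ahler identities by a single vanishing statement at the chosen point; both routes, as they must, use the K\"ahler condition (in your case it enters through the existence of a local potential, which is what makes the Bochner normalization possible, and indeed the normalization fails for general Hermitian metrics). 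One small point to tighten: for a merely smooth K\"ahler metric you should not invoke vanishing of the pure anti-holomorphic derivatives of $h_{i\bar j}$ to \emph{all} orders at $x_0$ (an infinite jet cannot in general be realized by an actual holomorphic coordinate change); but for a fixed $m$ only derivatives of $h^{\bar j i}$ up to order $m-1$ occur in the Leibniz expansion of $\iCR^m f$, and the finite-order Bochner normalization is achieved by a polynomial holomorphic change of coordinates, so the argument goes through verbatim after this truncation. With that adjustment, and your (correct) observation that $e_\alpha\otimes\partial_{i_1}\otimes\cdots\otimes\partial_{i_{m-1}}$ is a holomorphic frame so that \eqref{eq:localiCR} may be iterated, the proof is complete; an advantage of Shimura's commutator route is that it yields the commuting operators $\iCR_\ell$ as a by-product, which the paper needs anyway, whereas your route is shorter if one only wants the symmetry statement itself.
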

\end{samepage}
We note that the symmetry property also follows from Lemma~2.0 in Shimuras paper \cite{S86}, in which the concept of nearly holomorphic functions is introduced for the first time.

\begin{remark}
	The invariant Cauchy--Riemann operator can be defined in the same way for the more general class 
	of Hermitian manifolds. However, the proof of the symmetry property \cite{PZ98, S86} relies on
	the Kähler property of the metric.
\end{remark}

\begin{remark}\label{rmk:ellipticoperators}
	We like to point out that the sequence of operators
	\[
		C^\infty(X,\sE)\stackrel{\iCR}{\rightarrow} C^\infty(X,\sE\otimes T^{(1,0)})
		\stackrel{\iCR}{\rightarrow}\cdots\stackrel{\iCR}{\rightarrow}
		C^\infty(X,\sE\otimes S_mT^{(1,0)})\stackrel{\iCR}{\rightarrow}\cdots
	\]
	is not a complex. In fact, the non-triviality of the iterated Cauchy--Riemann operators $\iCR^m$ 
	is the core of the theory of nearly holomorphic sections, which we discuss in the next section. 
	Nevertheless there are elliptic operators, $L_m$, attached to each of the 
	operators $\iCR^m$, which have been introduced by Peetre, Peng and Zhang \cite{PPZ90, PZ93} on 
	the unit disc, and which are discussed in full generality in \cite{EP96}. For the definition of 
	$L_m$, choose some Hermitian structure $H$ on $\sE$, extend this in combination with the 
	Kähler metric $h$ to Hermitian structures on the vector bundles $\sE\otimes S_m T^{(1,0)}$, 
	$m\geq 0$, and let $(\iCR^m)^*$ be the adjoint of $\iCR^m$ with respect to the corresponding 
	Hermitian inner products. Then, the \emph{generalized Laplacians} $L_m$ ($m\geq 1$) are 
	defined by
	\[
		L_m:= (\iCR^m)^*\circ\iCR^m: C^\infty(X,\sE)\to C^\infty(X,\sE)\,.
	\]
	Indeed, for the trivial line bundle $\sE=X\times\CC$ with standard Hermitian structure, 
	$L_1$ coincides with the familiar Laplace--Beltrami operator on $X$, see \cite[p.23]{EP96}. In 
	general, we claim that each $L_m$ is an elliptic operator (which has not been noted in 
	\cite{EP96, PPZ90, PZ93}). For the notation and details of elliptic operator theory we refer to 
	\cite{We08}. We first note that the definitions immediately imply that $\iCR^m$ is a 
	differential operator of order $m$, hence $(\iCR^m)^*$ and $L_m$ are also differential operators 
	of order $m$ and $2m$, respectively. Moreover, it follows from \eqref{eq:localiCR} that the 
	symbol of $\iCR$ is given by
	\[
		\sigma(\iCR)(x,\xi):\sE_x\to\sE_x\otimes T_x^{(1,0)},\
		e\mapsto e\otimes i\,\widehat{\xi^{(0,1)}}\,,
	\]
	where $\xi^{(0,1)}$ denotes the projection of $\xi\in T_x^*$ onto the dual antiholomorphic 
	tangent space $(T_x^{(0,1)})^*$, and $\widehat{\xi^{(0,1)}}$ is the holomorphic tangent vector
	dual to $\xi^{(0,1)}$ with respect to the Kähler metric $h$. Now, the symbol of the 
	iterated operator $\iCR^m$ is
	\[
		\sigma(\iCR^m)(x,\xi):\sE_x\to\sE_x\otimes S_mT_x^{(1,0)},\
		e\mapsto e\otimes (i\,\widehat{\xi^{(0,1)}})^m\,,
	\]
	where $\zeta^m$ denotes the $m$'th symmetric tensor product of $\zeta\in T_x^{(1,0)}$. Since the 
	symbol of the adjoint operator is the adjoint of the symbol, it follows that
	\[
		\sigma((\iCR^m)^*)(x,\xi):\sE_x\otimes S_mT_x^{(1,0)}\to\sE_x,\
		e\otimes \prod_j v_j\mapsto e\cdot\prod_j(-i\,\xi^{(1,0)}(v_j))
	\]
	(linearly extended to all of $\sE_x\otimes S_mT_x^{(1,0)}$). Finally, this yields that the 
	symbol of $L_m$ is given by
	\[
		\sigma(L_m)(x,\xi):\sE_x\to\sE_x,\
		e \mapsto e\cdot h_x^*(\xi,\xi)^m\,,
	\]
	where $h^*$ denotes the Hermitian inner product on the cotangent bundle $T^*$. This is an 
	isomorphism for all $\xi\neq 0$, and hence $L_m$ is an elliptic operator.
\end{remark}

\subsection{Nearly holomorphic sections}\label{subsec:NearlyHolSections}
A section $f\in C^\infty(X,\sE)$ is called \emph{nearly holomorphic}, if it is annihilated by $\iCR^m$ for some $m\in\NN$. For the smallest such $m$, we call $\deg f:=m-1$ the \emph{degree} of $f$, and denote by
\[
	\sN^m(X,\sE) := \ker\iCR^{m+1}
\]
the space of all nearly holomorphic sections of degree at most $m$. For convenience we also set $\sN^{-1}(X,\sE):=\{0\}$. The space of \emph{all} nearly holomorphic sections is denoted by
\[
	\sN(X,\sE) := \bigcup_{m=0}^\infty \sN^m(X,\sE)\;.
\]
For the trivial line bundle $\sE = X\times\CC$, we set $\sN^m(X):=\sN^m(X,X\times\CC)$, $\sN(X):=\sN(X,X\times\CC)$. These are the \emph{nearly holomorphic functions} introduced by Shimura \cite{S86}.

\begin{remark}
	Since $\iCR^m$ is a differential operator, we may also consider its action on local sections 
	$f\in C^\infty(\sU,\sE_\sU)$ for $\sU\subseteq X$ open. Then, $\sN(\sU,\sE_\sU)$ is defined in 
	the same way as for global sections. It readily follows that the assignment
	$\sU\mapsto\sN(\sU,\sE_\sU)$ in combination with the natural restriction maps form an abelian 
	sheaf.
\end{remark}

The term 'nearly holomorphic' is motivated by the following local characterization. Let $\sU\subseteq X$ be an open subset with local coordinates $z=(z^1,\ldots, z^n)$ and a Kähler potential $\Psi\in C^\infty(\sU)$. Then define 
\begin{align}\label{eq:qmapcomponents}
	q:=\del\Psi\in C^\infty(\sU,(T^{(1,0)}_\sU)^*)\,,\quad
	q_\ell:=\frac{\partial\Psi}{\partial z^\ell}\in C^\infty(\sU)\,.
\end{align}
We call $q$ the \emph{$q$-map of $\Psi$}, and note that the $q_\ell$ are the coefficient functions of $q$ with respect to the chosen coordinates. For $\b i\in\NN^n$ we use the usual multi-index notation, so $|{\b i}| = i_1+\cdots+i_n$ and $q(z)^\b i = \prod_{\ell=1}^n q_\ell(z)^{i_\ell}$. 

\begin{proposition}\label{prop:LocalDescription}
	A local section $f\in C^\infty(\sU,\sE_\sU)$ is nearly holomorphic of degree $m$ if and only 
	if it is a polynomial in $q_1,\ldots,q_n$ of degree $m$ with holomorphic coefficients, 
	i.e., if there exist local holomorphic sections $f_\b i\in\sO(\sU,\sE_\sU)$ such that
	\begin{align}\label{eq:NearlyHolPolyDecomp}
		f(z) = \sum_{|{\b i}|\leq m} f_\b i(z)\cdot q(z)^\b i
	\end{align}
	for all $z\in\sU$, and $f_\b i\neq 0$ for some $\b i\in\NN^n$ with $|\b i|=m$. Moreover, the 
	coefficient sections $f_\b i$ are uniquely determined by the choice of $\Psi$.
\end{proposition}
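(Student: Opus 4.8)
The plan is to reduce the whole statement to one computation. Since $\iCR=\eta^*\circ\delbar$, the operator $\iCR$ annihilates holomorphic sections and obeys a Leibniz rule on products with functions, and from the local formula \eqref{eq:localiCR} together with the K\"ahler potential identity $\partial q_\ell/\partial\bar z^j=h_{\ell\bar j}$ one gets $\iCR q_\ell=\partial_\ell$, a \emph{holomorphic} section of $T^{(1,0)}$. Writing $\mathcal P^m(\sU,\mathcal F)$ for the space of sections of the form $\sum_{|\mathbf i|\le m}f_{\mathbf i}q^{\mathbf i}$ with $f_{\mathbf i}\in\sO(\sU,\mathcal F_\sU)$, the Leibniz rule then gives (with $\mathbf e_\ell\in\NN^n$ the $\ell$-th unit vector)
\[
	\iCR\Big(\sum_{|\mathbf i|\le m}f_{\mathbf i}q^{\mathbf i}\Big)
	=\sum_{|\mathbf j|\le m-1}\Big(\sum_{\ell}(j_\ell+1)\,f_{\mathbf j+\mathbf e_\ell}\otimes\partial_\ell\Big)q^{\mathbf j},
\]
so $\iCR$ carries $\mathcal P^m(\sU,\mathcal F)$ into $\mathcal P^{m-1}(\sU,\mathcal F\otimes T^{(1,0)})$; since $\mathcal P^0=\sO$, iterating shows $\iCR^{m+1}$ kills $\mathcal P^m$. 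The same recursion shows $\iCR^m$ sends the top-degree part $\sum_{|\mathbf i|=m}f_{\mathbf i}q^{\mathbf i}$ to $\sum_{|\mathbf i|=m}c_{\mathbf i}\,f_{\mathbf i}\otimes\partial^{\mathbf i}$ with nonzero combinatorial constants $c_{\mathbf i}$, which is injective because the $\partial^{\mathbf i}$ with $|\mathbf i|=m$ are pointwise linearly independent in $S_mT^{(1,0)}$. This settles the ``if'' direction and the claim about the degree, and — applying $\iCR^M$ to a relation $\sum h_{\mathbf i}q^{\mathbf i}=0$ and inducting downward on $M=\max\{|\mathbf i|:h_{\mathbf i}\neq0\}$ — the uniqueness of the coefficients.

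For ``only if'' I would prove by induction on $m$, uniformly over holomorphic bundles, that $\ker\iCR^{m+1}\subseteq\mathcal P^m$. The base case $m=0$ is $\ker\iCR=\sO$, immediate from \eqref{eq:localiCR} and invertibility of $(h^{\bar j i})$. For the step, let $f\in\ker\iCR^{m+1}$; then $\iCR f\in\ker\iCR^m$, so by induction $\iCR f=\sum_{|\mathbf j|\le m-1}g_{\mathbf j}q^{\mathbf j}$ with $g_{\mathbf j}=\sum_i g_{\mathbf j}^i\otimes\partial_i$ holomorphic; set $\phi^i:=\sum_{\mathbf j}g_{\mathbf j}^i q^{\mathbf j}\in C^\infty(\sU,\sE_\sU)$. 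It is enough to exhibit some $\tilde f\in\mathcal P^m(\sU,\sE_\sU)$ with $\delbar\tilde f=\delbar f$, for then $f-\tilde f$ is holomorphic and $f\in\mathcal P^m$. Now $\iCR f=\eta^*\delbar f$ unravels to $\delbar f=\sum_{j}\big(\sum_i h_{i\bar j}\phi^i\big)\,d\bar z^j$, whereas for any polynomial $\tilde f=\sum f_{\mathbf i}q^{\mathbf i}$ with holomorphic coefficients one has $\delbar\tilde f=\sum_{j}\big(\sum_i (\partial\tilde f/\partial q_i)\,h_{i\bar j}\big)\,d\bar z^j$, the $q_i$ being treated as independent variables. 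Since $(h_{i\bar j})$ is invertible, $\delbar\tilde f=\delbar f$ is equivalent to $\partial\tilde f/\partial q_i=\phi^i$ for all $i$, i.e.\ to recognising the formal polynomial vector field $(\phi^1,\ldots,\phi^n)$ in the variables $q_1,\ldots,q_n$ as a gradient.

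The crux is therefore the integrability (curl-free) condition $\partial\phi^i/\partial q_\ell=\partial\phi^\ell/\partial q_i$, which I would deduce from $\delbar^2f=0$. Writing $\delbar f=\sum_j\omega_j\,d\bar z^j$ with $\omega_j=\sum_i h_{i\bar j}\phi^i$, closedness forces $\partial\omega_j/\partial\bar z^k=\partial\omega_k/\partial\bar z^j$ for all $j,k$. Expanding, using $h_{i\bar j}=\partial q_i/\partial\bar z^j$ (so that $\partial h_{i\bar j}/\partial\bar z^k$ is symmetric in $j,k$ and the arising terms cancel) and $\partial\phi^i/\partial\bar z^k=\sum_\ell h_{\ell\bar k}\,\partial\phi^i/\partial q_\ell$ (valid because the $g_{\mathbf j}^i$ are holomorphic), one is left after relabelling the summation indices with $\sum_{i,\ell}h_{i\bar j}h_{\ell\bar k}\big(\partial\phi^i/\partial q_\ell-\partial\phi^\ell/\partial q_i\big)=0$ for all $j,k$; invertibility of $(h_{i\bar j})$ then gives the claim. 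The formal Poincar\'e lemma finally produces $\tilde f=\sum_{|\mathbf j|\le m-1}\frac1{|\mathbf j|+1}\sum_i g_{\mathbf j}^i\,q^{\mathbf j+\mathbf e_i}\in\mathcal P^m(\sU,\sE_\sU)$ with $\partial\tilde f/\partial q_i=\phi^i$, closing the induction. I expect this curl-free computation to be the only genuine obstacle; everything else is bookkeeping with the Leibniz rule for $\iCR$ and the local formula \eqref{eq:localiCR}.
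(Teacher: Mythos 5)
Your proposal is correct, but it takes a genuinely different route from the paper's. The paper reduces to the scalar case by choosing local holomorphic trivializations of $\sE$, invokes Shimura's Lemma~2.1 for the key implication that a nearly holomorphic \emph{function} is a polynomial in $q_1,\ldots,q_n$ with holomorphic coefficients, proves uniqueness by applying the commuting operators $\iCR^{\mathbf i}$ to the top-degree terms, and then needs an extra change-of-trivialization argument to see that the coefficient sections $f_{\mathbf i}$ are well defined on all of $\sU$. You instead reprove the inclusion $\ker\iCR^{m+1}\subseteq\mathcal{P}^m$ from scratch, directly for bundle-valued sections, by induction on $m$ uniformly over holomorphic bundles: the induction hypothesis applied to $\iCR f$ produces the candidate gradient data $\phi^i$, and the genuinely new ingredient is the formal Poincar\'e lemma in the variables $q_1,\ldots,q_n$, whose integrability condition you extract from the symmetry of mixed $\bar z$-derivatives together with $h_{\ell\bar j}=\partial q_\ell/\partial\bar z^j$; your uniqueness argument (hit a vanishing relation with $\iCR^{M}$ and use pointwise independence of the symmetric monomials $\partial^{\mathbf i}$) is in substance the same as the paper's. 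What your route buys: it is self-contained (no appeal to Shimura), it never trivializes $\sE$, so the coefficients come out as sections over all of $\sU$ at once and the paper's patching step disappears, and the homotopy formula $\tilde f=\sum_{\mathbf j,i}\tfrac{1}{|\mathbf j|+1}\,g^i_{\mathbf j}\,q^{\mathbf j+\mathbf e_i}$ is an explicit integration procedure close in spirit to the generalized Taylor expansion of Section~\ref{sec:taylor}. What the paper's route buys is brevity.

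One step you should make explicit. The curl-free identity you derive from $\partial\omega_j/\partial\bar z^k=\partial\omega_k/\partial\bar z^j$ is, a priori, an identity between the functions obtained after substituting $q=q(z)$, whereas the formal Poincar\'e lemma (the verification that $\partial\tilde f/\partial q_i=\phi^i$, which is a statement about the coefficients $g^i_{\mathbf j}$) requires it as an identity of polynomials in the formal variables $q$ with holomorphic coefficients. This is not a gap, since $\partial\phi^i/\partial q_\ell-\partial\phi^\ell/\partial q_i$ is again such a polynomial and the uniqueness statement you proved in your first paragraph (a $q$-polynomial with holomorphic coefficients vanishing identically on $\sU$ has vanishing coefficients) upgrades the evaluated identity to the formal one --- but the appeal to that uniqueness should be stated.
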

\begin{proof}
The main part of this proposition follows essentially from the corresponding result in the special case of the trivial line bundle, which is proved in \cite{S86}. First choose local trivializing holomorphic sections $(e_\alpha)_{\alpha = 1,\ldots,N}$ on some open subset $\sV\subseteq\sU$ and decompose $f = \sum_\alpha f^\alpha e_\alpha$. Since $\iCR$ acts trivially on the holomorphic sections $e_\alpha$, $f$ is a nearly holomorphic section if and only if each $f^\alpha$ is a nearly holomorphic function. In this case, \cite[Lemma~2.1]{S86} implies that 
\begin{align}\label{eq:NearlyHolPolyDecompCoeff}
	f^\alpha(z) = \sum_{|{\b i}|\leq m}f^\alpha_\b i(z)\cdot q^\b i(z)
\end{align}
for some holomorphic maps $f^\alpha_\b i\in\sO(\sV)$. We show that this decomposition is unique. Set $\iCR_\ell:=\sum_k h^{\bar k\ell}\tfrac{\partial}{\partial \bar z^k}$. Due to \cite[Lemma~2.0]{S86}, these operators commute, cf. also Proposition~\ref{prop:BasisPropsNearHol}\,(b). Let $\iCR^\b i:=\prod_\ell (\iCR_\ell)^{i_\ell}$. It easily follows that $\iCR_\ell q_k(z) = \delta_{\ell k}$ for all $\ell, k$, and hence
\begin{align*}
	(\iCR^\b iq^\b j)(z) =
		\begin{cases}
			\tfrac{\b j!}{(\b j-\b i)!}\,q^{\b j-\b i}(z) & \text{if $\b i\leq\b j$,}\\
			0	& \text{otherwise.}
		\end{cases}
\end{align*}
Here we use the abbreviations $\b j! := \prod j_\ell!$, $(\b i-\b j)_\ell := i_\ell-j_\ell$, and $\b i\leq\b j$ if and only if $i_\ell\leq j_\ell$ for all $\ell$. In particular,
\begin{align*}
	(\iCR^\b iq^\b j)(z)
		= \b i!\cdot\delta_{i_1,j_1}\cdots\delta_{i_n,j_n}
	\quad\text{if}\quad |{\b i}|\geq|{\b j}|\,,
\end{align*}
This is the local version of \cite[Lemma~2.2]{Zh02}. The uniqueness statement now follows by induction on $m$. For $m=0$, there is nothing to prove. For $m>0$, applying $\iCR^\b i$ with $|{\b i}| = m$ to \eqref{eq:NearlyHolPolyDecompCoeff} yields $\iCR^\b if(z) =f_\b i^\alpha(z)$, thus the holomorphic coefficient functions $c_\b i^\alpha$ with $|{\b i}| = m$ are unique. Subtracting these terms from $f^\alpha$ yields a local nearly holomorphic function of degree $< m$, so by induction hypothesis, the coefficient functions of the lower degrees are also uniquely determined by $f$.\\
Now suppose that $(\tilde e_\alpha)_{\alpha = 1,\ldots,N}$ is a different system of local trivializing holomorphic sections on $\sV$, and let $\tilde f^\alpha$ and $\tilde f^\alpha_\b i$ be the corresponding coefficient functions. Then $f^\alpha(z) = \sum_\beta T^\alpha_\beta(z)\tilde f^\beta(z)$ for some holomorphic maps $T^\alpha_\beta\in\sO(\sV)$. The uniqueness of the coefficients in \eqref{eq:NearlyHolPolyDecompCoeff} now implies $f^\alpha_\b i = \sum_\beta T^\alpha_\beta\tilde f^\beta_\b i$ for all $\b i$. Therefore,
\[
	f_\b i
		:=\sum_\alpha f_\b i^\alpha e_\alpha
		= \sum_\alpha \tilde f_\b i^\alpha\tilde e_\alpha
\]
is independent of the choice of the trivializing sections. Choosing an open covering of $\sU$ with local trivializing holomorphic sections, this shows that $f_\b i$ is defined on all of $\sU\subseteq X$, and it is uniquely determined by the Kähler potential $\Psi$.
\end{proof}

\begin{remark}\label{rmk:coeffQuestion}
	We like to stress that the procedure for determining the coefficient sections $f_\b i$ of 
	\eqref{eq:NearlyHolPolyDecomp} starts with the coefficients of highest degree, $|\b i| = \deg f$.
	This is somehow unsatisfactory, since this assumes the a priori knowledge of the degree 
	of $f$. Moreover, in view of expanding more general sections into a series of nearly holomorphic 
	sections (as a sort of Taylor expansion), it would be desirable to have non-recursive formulas 
	for the coefficient sections. In fact, it suffices to have a formula for the lowest degree term
	$f_\b 0$, since applying such a formula to $\iCR^{\b i}f$ (with $\iCR^{\b i}$ defined as above) 
	also yields the coefficients of the higher degree terms. For compact Hermitian symmetric spaces, 
	this problem is solved in Section~\ref{sec:taylor} by using a particular Kähler potential. 
\end{remark}

\begin{remark}\label{rmk:DifferentKaehlerPotential}
	We briefly indicate what happens if we choose a different Kähler potential $\tilde\Psi$ on 
	$\sU$. Since the difference $\Psi-\tilde\Psi$ is the real part of a holomorphic function 
	$g\in\sO(\sU)$, it follows that the maps $q_\ell=\partial_\ell\Psi$ and
	$\tilde q'_\ell=\partial_\ell\tilde\Psi$ satisfy $q_\ell=\tilde q_\ell+\partial_\ell g$ for all 
	$\ell$. Now it is a combinatorial exercise to determine from \eqref{eq:NearlyHolPolyDecomp} the 
	relation between the corresponding coefficient sections $f_\b i$ and $\tilde f_\b j$. We note 
	that the coefficient sections of highest order, $|{\b i}| = m$, are independent of the choice of 
	the Kähler potential. In fact, up to constants these are the coefficient functions of the 
	holomorphic section	$\bar D^mf\in\sO(\sU,\sE_\sU\otimes S_mT^{(1,0)})$.
\end{remark}

\begin{theorem}[Identity theorem]\label{thm:IdTheorem}
	Let $X$ be connected and $\sU\subseteq X$ be open. Then, the restriction map
	\[
		\sN(X,\sE)\to\sN(\sU,\sE_\sU),\ f\mapsto f\at{\sU}
	\] 
	is an injection.
\end{theorem}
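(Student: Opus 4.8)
The plan is to reduce the statement to the classical identity theorem for holomorphic sections via the local polynomial description from Proposition~\ref{prop:LocalDescription}. Since $X$ is connected, it suffices to show that if $f\in\sN(X,\sE)$ vanishes on a nonempty open set, then $f$ vanishes identically; equivalently, the set where $f$ vanishes to infinite order (in the nearly holomorphic sense) is open and closed. First I would reduce to the case where $\sU$ is connected and, by standard connectedness arguments, it is enough to prove: if $f\in\sN(X,\sE)$ and $f$ vanishes on some nonempty open $\sV\subseteq X$, then $f$ vanishes on a neighbourhood of $\overline{\sV}$, which by iteration spreads vanishing over all of $X$.

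The key step is local. Fix a point $x_0\in X$, choose a connected coordinate neighbourhood $\sU$ with local coordinates $z=(z^1,\dots,z^n)$, a K\"ahler potential $\Psi$ on $\sU$, trivializing holomorphic sections $(e_\alpha)$, and write $f=\sum_\alpha f^\alpha e_\alpha$ with each $f^\alpha$ nearly holomorphic of degree $\leq m$ say, so that by Proposition~\ref{prop:LocalDescription},
\[
	f^\alpha(z)=\sum_{|\b i|\leq m}f^\alpha_\b i(z)\,q(z)^\b i,\qquad f^\alpha_\b i\in\sO(\sU).
\]
Applying the operators $\iCR^\b j$ and using the identity $(\iCR^\b i q^\b j)(z)=\b i!\,\delta_{i_1 j_1}\cdots\delta_{i_n j_n}$ for $|\b i|\geq|\b j|$ established in the proof of Proposition~\ref{prop:LocalDescription}, one recovers each holomorphic coefficient $f^\alpha_\b i$ as $\iCR^\b i$ applied (with the appropriate lower-order corrections subtracted) to $f^\alpha$; in particular the $f^\alpha_\b i$ are uniquely determined and depend only on $f$ and $\Psi$. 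Now if $f$ vanishes on an open subset $\sW\subseteq\sU$, then all the derivatives $\iCR^\b j f$ vanish on $\sW$, hence each holomorphic coefficient section $f^\alpha_\b i$ vanishes on $\sW$. Since the $f^\alpha_\b i$ are holomorphic on the connected open set $\sU$ (or on an overlapping cover thereof), the classical identity theorem for holomorphic functions forces $f^\alpha_\b i\equiv 0$ on $\sU$, and therefore $f\equiv 0$ on $\sU$.

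With this local statement in hand, the global conclusion follows by the usual connectedness argument: let $Z=\{x\in X: f\text{ vanishes on a neighbourhood of }x\}$. This set is open by definition, nonempty by hypothesis (it contains $\sU$ after shrinking), and the local step above shows it is closed — if $x\in\overline Z$, pick a connected coordinate neighbourhood $\sU'$ of $x$; since $\sU'$ meets $Z$, $f$ vanishes on a nonempty open subset of $\sU'$, hence by the local step $f\equiv 0$ on $\sU'$, so $x\in Z$. Since $X$ is connected, $Z=X$, i.e.\ $f=0$. I expect the only subtlety to be bookkeeping across overlapping trivializations and coordinate patches — the holomorphic coefficients $f^\alpha_\b i$ are defined patchwise, but Proposition~\ref{prop:LocalDescription} already guarantees the intrinsic sections $f_\b i$ are globally well-defined on $\sU$, so one simply invokes the identity theorem for the holomorphic sections $f_\b i\in\sO(\sU,\sE_\sU\otimes S_{|\b i|}T^{(1,0)})$ (or argues on a connected cover of $\sU$), and no genuine obstacle arises beyond this.
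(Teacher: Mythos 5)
Your proof is correct, but it follows a different route from the paper. You work locally: you invoke Proposition~\ref{prop:LocalDescription} on a connected coordinate patch admitting a K\"ahler potential, recover the holomorphic coefficients $f_{\b i}$ from $f$ by the operators $\iCR^{\b i}$ (so vanishing of $f$ on an open set forces vanishing of all $f_{\b i}$ there), apply the classical identity theorem to the holomorphic coefficients to kill $f$ on the whole patch, and then spread the vanishing over $X$ by the open--closed connectedness argument. The paper instead argues globally in one stroke: if $f\in\sN^m(X,\sE)$ with $m=\deg f$ minimal and $f\at{\sU}=0$, then $\iCR^m f$ lies in $\ker\iCR=\ker\delbar$, i.e.\ it is a \emph{global} holomorphic section of $\sE\otimes S_mT^{(1,0)}$ vanishing on $\sU$, hence identically zero by the identity theorem for holomorphic sections on the connected manifold $X$; this contradicts minimality of $m$ unless $f=0$. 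The paper's argument is shorter and needs neither local K\"ahler potentials, trivializations, nor the patch-to-patch propagation (all of that is absorbed into the single application of the identity theorem for holomorphic sections of $\sE\otimes S_mT^{(1,0)}$), while your argument buys a genuinely local statement -- an identity principle within every connected potential neighbourhood -- as a by-product of the coefficient description; just make sure to note explicitly that every point has a connected coordinate neighbourhood on which a K\"ahler potential exists (local $\partial\delbar$-lemma), since Proposition~\ref{prop:LocalDescription} is only available on such patches, and that $\sU\neq\emptyset$ is implicitly assumed.
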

\begin{proof}
For $f\in\sN^m(X,\sE)$ with $m$ smallest possible, assume that $f|_\sU=0$. If $m\geq 0$, then $\iCR^m f$ is a section in $\sE\otimes S_mT^{(1,0)}$, and in the kernel of $\iCR$. Since $\iCR = \eta^*\circ\delbar$ where $\eta^*$ is an isomorphism, the kernel of $\iCR$ coincides with the kernel of $\delbar$. Hence $\iCR^m f$ is a holomorphic section in $\sE\otimes S_mT^{(1,0)}$. The assumption $f|_\sU = 0$ also implies $\iCR^m f|_\sU = 0$. According to the identity theorem for holomorphic sections, this yields $\iCR^m f = 0$, and hence $f\in\sN^{m-1}(X,\sE)$, which contradicts the minimality of $m$. Therefore, $m=-1$ and $f\in\sN^{-1}(X,\sE)=\{0\}$.
\end{proof}

In connection with the identity theorem, Proposition~\ref{prop:LocalDescription} implies that a global nearly holomorphic section $f\in\sN(X,\sE)$ is uniquely determined by the corresponding set of local holomorphic sections $f_\b i\subseteq\sO(\sU,\sE_\sU)$ for $\sU\subseteq X$ open. In particular, we note that $f$ has degree $m$ if and only if $f|_\sU$ has degree $m$, independent of the choice of $\sU$. This observation also yields the following result.

\begin{corollary}\label{cor:NearHolModule}
	$\sN(X)$ is a filtered algebra,
	\[
		\sN^m(X)\cdot\sN^{m'}(X)\subseteq\sN^{m+m'}(X)\,,
	\]
	Moreover, $\sN(X,\sE)$ is a filtered $\sN(X)$-module with 
	\[
		\deg(fg) = \deg f + \deg g
	\]
	for $f\in\sN(X)$, $g\in\sN(X,\sE)$.
\end{corollary}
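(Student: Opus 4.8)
The plan is to reduce everything to the local polynomial description furnished by Proposition~\ref{prop:LocalDescription} and the rigidity provided by the identity theorem (Theorem~\ref{thm:IdTheorem}). Fix a connected $X$, pick any open $\sU\subseteq X$ carrying local coordinates $z=(z^1,\dots,z^n)$ and a K\"ahler potential $\Psi$, and let $q_1,\dots,q_n$ be the associated $q$-map functions. Given $f\in\sN^m(X)$ and $g\in\sN^{m'}(X,\sE)$, write their local expansions $f|_\sU=\sum_{|\b i|\le m}f_\b i\,q^\b i$ and $g|_\sU=\sum_{|\b j|\le m'}g_\b j\,q^\b j$ with holomorphic coefficients $f_\b i\in\sO(\sU)$, $g_\b j\in\sO(\sU,\sE_\sU)$. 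Then $(fg)|_\sU=\sum_{\b i,\b j}f_\b i g_\b j\,q^{\b i+\b j}=\sum_{|\b k|\le m+m'}\bigl(\sum_{\b i+\b j=\b k}f_\b i g_\b j\bigr)q^\b k$, and the coefficients $\sum_{\b i+\b j=\b k}f_\b i g_\b j$ are again holomorphic sections of $\sE_\sU$ since $\sO(\sU)$ acts on $\sO(\sU,\sE_\sU)$. By Proposition~\ref{prop:LocalDescription} this exhibits $(fg)|_\sU$ as nearly holomorphic of degree at most $m+m'$, so $fg\in\sN^{m+m'}(\sU,\sE_\sU)$; moreover $fg$ is a globally defined smooth section, hence by the sheaf remark following the definition of $\sN$ it lies in $\sN^{m+m'}(X,\sE)$. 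Specializing $\sE=X\times\CC$ gives the filtered-algebra inclusion $\sN^m(X)\cdot\sN^{m'}(X)\subseteq\sN^{m+m'}(X)$, and the general case gives the filtered-module structure.

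For the degree formula $\deg(fg)=\deg f+\deg g$, first note $\deg(fg)\le\deg f+\deg g$ is immediate from the inclusion just proved. For the reverse inequality I would work at the top degree. Let $m=\deg f$ and $m'=\deg g$. By Remark~\ref{rmk:DifferentKaehlerPotential}, the top-degree coefficients of a nearly holomorphic section are, up to constants, the coefficient sections of the holomorphic section $\iCR^m f$ (resp.\ $\iCR^{m'}g$); in particular they are not all zero precisely because $f$ (resp.\ $g$) genuinely has that degree. The top-degree part of $fg$ is the bidegree-$(m,m')$ multilinear combination of these, i.e.\ essentially the image of $\iCR^m f\otimes\iCR^{m'}g$ under the symmetrization $S_mT^{(1,0)}\otimes S_{m'}T^{(1,0)}\to S_{m+m'}T^{(1,0)}$. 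The key point is that this symmetrization map, applied to a product of a nonzero holomorphic section of $\sE_\sU\otimes S_mT^{(1,0)}$ with a nowhere-relevant... more precisely a nonzero holomorphic section of $S_{m'}T^{(1,0)}$, is nonzero: in local coordinates it amounts to the statement that if $P(z,q)$ and $Q(z,q)$ are polynomials in $q$ with holomorphic coefficients whose top-degree homogeneous parts $P_m$, $Q_{m'}$ are nonzero, then $(PQ)_{m+m'}=P_m Q_{m'}\ne 0$. This follows because $\sO(\sU)[q_1,\dots,q_n]$ is a polynomial ring over the integral domain $\sO(\sU)$ (using that $\sU$ is connected, shrinking $\sU$ if necessary so that $\sO(\sU)$ has no zero divisors), so the product of nonzero homogeneous top parts is nonzero. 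Hence the top-degree part of $fg$ is nonzero, so $\deg(fg)\ge m+m'$, and equality holds.

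The one point needing care — and what I expect to be the main obstacle — is the passage from ``$fg$ is locally nearly holomorphic on each chart'' to ``$fg\in\sN(X,\sE)$ globally,'' together with the consistency of the degree across charts. The degree consistency is exactly the observation recorded after Theorem~\ref{thm:IdTheorem}: a global nearly holomorphic section has a well-defined degree independent of the chart, and the local expansions glue. For the integral-domain argument in the degree-formula step one must restrict to a connected chart $\sU$ on which $\sO(\sU)$ has no zero divisors (e.g.\ a connected coordinate ball), which is harmless since nearly holomorphicity and the value of the degree are local and $X$ is connected; the identity theorem then propagates the conclusion. Apart from this bookkeeping the argument is a routine transfer of the polynomial-ring structure of $\sO(\sU)[q]$ through the dictionary of Proposition~\ref{prop:LocalDescription}.
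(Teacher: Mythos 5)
Your proof is correct and takes essentially the same route as the paper, which deduces the corollary directly from Proposition~\ref{prop:LocalDescription} together with the observation (via Theorem~\ref{thm:IdTheorem}) that a global nearly holomorphic section is determined by, and has its degree detected on, any chart --- precisely your multiplication-of-local-expansions argument. Your integral-domain justification of $\deg(fg)=\deg f+\deg g$ (plus the componentwise reduction for $\sE$-valued coefficients) merely spells out a step the paper leaves implicit.
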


We close the discussion of nearly holomorphic sections on general Kähler manifolds with the following statement.

\begin{proposition}\label{prop:FiniteDim}
	If $X$ is compact with finitely many connected components, then $\sN^m(X,\sE)$ is finite 
	dimensional for all $m$.
\end{proposition}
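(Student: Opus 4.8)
The plan is to show that $\sN^m(X,\sE) = \ker \iCR^{m+1}$ is finite dimensional by combining the ellipticity observation from Remark~\ref{rmk:ellipticoperators} with standard elliptic theory on compact manifolds, reducing to the case where $X$ is connected. First I would note that it suffices to treat a single connected component, since $C^\infty(X,\sE)$ is the direct sum of the spaces of smooth sections over the (finitely many) components, and $\iCR$ is local, so $\sN^m(X,\sE)$ is the direct sum of the corresponding spaces over the components; hence we may assume $X$ is compact and connected.

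The key step is to relate $\ker\iCR^{m+1}$ to the kernel of an elliptic operator. Fix any Hermitian structure $H$ on $\sE$, extend it together with the K\"ahler metric $h$ to Hermitian structures on all the bundles $\sE\otimes S_kT^{(1,0)}$, and form the generalized Laplacian $L_{m+1} = (\iCR^{m+1})^*\circ\iCR^{m+1}$ as in Remark~\ref{rmk:ellipticoperators}. That remark shows $L_{m+1}$ is an elliptic differential operator on the compact manifold $X$ acting on sections of $\sE$. Now observe that for $f\in C^\infty(X,\sE)$ one has $L_{m+1}f = 0$ if and only if $\langle L_{m+1}f, f\rangle_{L^2} = \Norm{\iCR^{m+1}f}_{L^2}^2 = 0$ (using that $X$ is compact so the $L^2$ inner product and the formal adjoint behave as expected, with no boundary terms), which holds if and only if $\iCR^{m+1}f = 0$, i.e.\ $f\in\sN^m(X,\sE)$. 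Therefore $\sN^m(X,\sE) = \ker L_{m+1}$.

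The conclusion then follows from the fundamental finiteness theorem for elliptic operators: an elliptic differential operator on a compact manifold has finite-dimensional kernel (see, e.g., \cite{We08}). Applying this to $L_{m+1}$ gives $\dim\sN^m(X,\sE) = \dim\ker L_{m+1} < \infty$, and summing over the finitely many connected components completes the proof.

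I expect the main obstacle to be purely bookkeeping rather than conceptual: one must make sure the adjoint $(\iCR^{m+1})^*$ used to define $L_{m+1}$ is the genuine $L^2$-adjoint (so that the identity $\langle L_{m+1}f,f\rangle = \Norm{\iCR^{m+1}f}^2$ is exact, which is where compactness of $X$ is used to discard boundary contributions), and that the ellipticity computation of Remark~\ref{rmk:ellipticoperators} indeed applies to $L_{m+1}$ with $m+1\geq 1$ in place of a generic $m\geq 1$ --- but that computation is stated there in full generality, so nothing new is needed. An alternative, slightly more self-contained route would avoid the Laplacian and instead argue that $\iCR^{m+1}$ itself, as a map into the symmetric tensor bundle, is overdetermined-elliptic in the sense that its kernel agrees with that of the elliptic $L_{m+1}$; but invoking $L_{m+1}$ directly is cleanest.
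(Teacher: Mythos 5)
Your proof is correct and follows essentially the same route as the paper: both invoke the elliptic operator $L_{m+1}=(\iCR^{m+1})^*\circ\iCR^{m+1}$ from Remark~\ref{rmk:ellipticoperators} together with the finite-dimensionality of kernels of elliptic operators on compact manifolds \cite{We08}. The only difference is cosmetic: the paper contents itself with the inclusion $\sN^m(X,\sE)\subseteq\ker L_{m+1}$, which already suffices, whereas you additionally prove the reverse inclusion via the $L^2$ pairing and spell out the reduction to connected components.
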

\begin{proof}
Due to Remark~\ref{rmk:ellipticoperators}, $\sN^m(X,\sE)$ is a subspace of $\ker L_{m+1}$ for the elliptic operator $L_{m+1} = (\iCR^{m+1})^*\circ\iCR^{m+1}$. Now compactness of $X$ implies that $\ker L_{m+1}$ is finite dimensional, see \cite[Theorem~4.8 in Chapter IV]{We08}. 
\end{proof}

\begin{remark}
	We note that Proposition~\ref{prop:FiniteDim} also follows from the more familiar fact (also 
	proved by elliptic operator theory) that the space of holomorphic sections in a vector bundle of 
	a compact manifold is finite dimensional. This implies that the kernel of each operator in the 
	sequence
	\[
		C^\infty(X,\sE)\stackrel{\iCR}{\rightarrow} C^\infty(X,\sE\otimes T^{(1,0)})
		\stackrel{\iCR}{\rightarrow}\cdots\stackrel{\iCR}{\rightarrow}
		C^\infty(X,\sE\otimes S_mT^{(1,0)})\stackrel{\iCR}{\rightarrow}\cdots
	\]
	is finite dimensional (since $\ker\iCR = \ker\delbar$). Now basic arguments form linear 
	algebra show that the kernel of the iterated operator $\iCR^{m+1}$ is finite dimensional too.
\end{remark}


\section{Nearly holomorphic sections on compact\\Hermitian symmetric spaces}\label{sec:NearHolOnHermSym}
Let $X$ be an irreducible Hermitian symmetric space of compact type. Let $U$ be the simply connected covering group of the identity component of the automorphism group of $X$, which is a compact group. Then, $X=U/K$ where $K\subseteq U$ denotes the stabilizer subgroup of some base point in $X$. If $G$ denotes the complexification of $U$, which is also simply connected, we may view $X$ as complex flag manifold, $X=G/P$, with maximal parabolic subgroup $P\subseteq G$ such that $K\subseteq P$. The elements of $G$ act on $X$ by biholomorphic transformations. For details, we refer to \cite{He01}.

In the following, a $G$-homogeneous holomorphic vector bundle $\sE=G\times_PE$ on $X$ is called \emph{irreducible}, if the associated holomorphic representation $\rho:P\to\GL(E)$ is irreducible. 

From the invariance of the Cauchy--Riemann operator it follows that the space $\sN(X,\sE)$ of nearly holomorphic sections is $U$-invariant. In fact, each subspace $\sN^m(X,\sE)$ is $U$-invariant. The main goal of this section is the following result, representation theoretic applications are discussed in Section~\ref{sec:RepTheoApp}.

\begin{theorem}\label{thm:NearlyHolDens}
	Let $\sE$ be an irreducible $G$-homogeneous holomorphic vector bundle on $X$. If $\sN(X,\sE)$ is 
	non-trivial, then $\sN(X,\sE)$ is dense in $C(X,\sE)$ with respect to uniform convergence. 
\end{theorem}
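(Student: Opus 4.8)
The plan is to prove density by showing that $\sN(X,\sE)$ is a $U$-invariant subspace of $C(X,\sE)$ which is large enough to satisfy the hypotheses of the Stone--Weierstrass theorem (in its $U$-equivariant, vector-bundle form), and then to invoke the Peter--Weyl theorem to conclude. Concretely, since $\iCR$ is $U$-invariant by Proposition~\ref{prop:BasisPropsNearHol}(a), each $\sN^m(X,\sE)$ is a $U$-submodule of $C^\infty(X,\sE)$, and by Proposition~\ref{prop:FiniteDim} it is finite dimensional; hence $\sN(X,\sE)$ is a sum of finite-dimensional $U$-modules, i.e.\ consists of $U$-finite vectors. The first step is therefore to reduce the density statement to a representation-theoretic one: if $\sN(X,\sE)$ contains, for every irreducible $U$-type $V_\lambda$ occurring in $L^2(X,\sE)$, a copy of the full isotypic component, then $\sN(X,\sE)$ is dense in $L^2(X,\sE)$ and (being made of smooth, in fact real-analytic sections and closed under the relevant operations) dense in $C(X,\sE)$ for uniform convergence. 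So the goal becomes: every $U$-type in $L^2(X,\sE)$ already appears inside $\sN(X,\sE)$.

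The second, main step is to produce enough nearly holomorphic sections. Here I would use the hypothesis $\sN(X,\sE)\neq\{0\}$ together with the multiplicative structure. Pick $0\neq f\in\sN(X,\sE)$; by the identity theorem (Theorem~\ref{thm:IdTheorem}) $f$ is nonzero on a dense open set, and its $U$-translates $g^*f$ are again nearly holomorphic. Using Corollary~\ref{cor:NearHolModule}, products of nearly holomorphic \emph{functions} with the section $f$ stay nearly holomorphic, and $\sN(X)$ itself contains, for the trivial line bundle, a large algebra: the local description \eqref{eq:NearlyHolPolyDecomp} shows $\sN(\sU)$ contains all holomorphic functions on $\sU$ and the $q_\ell$, and globally one can build many nearly holomorphic functions from matrix coefficients of $U$ (e.g.\ the Kähler potential arising from an equivariant embedding $X\hookrightarrow\mathbb P(V)$ and its $q$-map, whose components are globally nearly holomorphic). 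The crucial claim to establish is that the $\sN(X)$-module generated by the $U$-translates of $f$ is $U$-invariant, nonzero, and separates points of $X$ in the appropriate bundle-theoretic sense (separates fibers and, at each point, spans $\sE_x$). Given $f\neq0$ somewhere, $U$-homogeneity of $\sE$ makes $\{g^*f\}$ span every fiber; separation of distinct points then follows because $\sN(X)$ separates points of $X$ (it contains enough matrix coefficients / holomorphic functions from a projective embedding).

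The third step is to assemble these pieces via an equivariant Stone--Weierstrass / Peter--Weyl argument. The closure $A$ of $\sN(X,\sE)$ in $C(X,\sE)$ is a closed $U$-invariant $C(X)$-submodule once one checks $\overline{\sN(X)}=C(X)$ by scalar Stone--Weierstrass (using that $\sN(X)$ is a point-separating subalgebra containing the constants and closed under conjugation — conjugation requires a small argument, since $\overline{\sN(X)}$ need not be $\sN(X)$, but one can instead work with the real algebra generated by $\sN(X)$ and $\overline{\sN(X)}$, all of whose elements are still smooth and whose closure contains $\sN(X)$). A closed $C(X)$-submodule of $C(X,\sE)$ that spans every fiber is all of $C(X,\sE)$ (a partition-of-unity / localization argument). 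Hence $A=C(X,\sE)$.

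\textbf{Main obstacle.} The hard part will be the second step: squeezing genuinely global nearly holomorphic sections out of the single assumption $\sN(X,\sE)\neq\{0\}$, and in particular verifying that the module they generate spans all fibers and separates all points. On a compact Hermitian symmetric space there is no global Kähler potential, so one cannot directly multiply by the functions $q_\ell$ globally; the argument must instead exploit $U$-equivariance (translating $f$ around, and using that $K$ acts irreducibly on the fiber together with transitivity of $U$ on $X$) and the existence of a sufficiently rich supply of globally nearly holomorphic functions coming from an equivariant projective embedding. Making the fiber-spanning and point-separation claims precise, and connecting them cleanly to the bundle-valued Stone--Weierstrass theorem, is where the real work lies; the elliptic-operator input (finite dimensionality) and the identity theorem are then just bookkeeping.
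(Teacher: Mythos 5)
Your reduction steps are essentially the paper's: translating a single nonzero $f\in\sN(X,\sE)$ around by $U$ and using irreducibility of the fiber representation to span every fiber, then localizing with a partition of unity so that density of the scalar algebra $\sN(X)$ in $C(X)$ plus the module property $\sN(X)\cdot\sN(X,\sE)\subseteq\sN(X,\sE)$ finishes the bundle case (this is exactly Lemma~\ref{lem:densityCrit} and the discussion following it; the Peter--Weyl detour in your first step is unnecessary and, as stated, circular, since ``every $U$-type of $L^2(X,\sE)$ occurs in $\sN(X,\sE)$'' is in the paper a \emph{consequence} of this theorem, not an ingredient). The genuine gap is in your second and third steps, i.e.\ precisely at the scalar statement $\overline{\sN(X)}^{\,\|\cdot\|_\infty}=C(X)$. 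Stone--Weierstrass needs a point-separating, conjugation-stable subalgebra, and your fix --- passing to the algebra generated by $\sN(X)$ and its complex conjugates --- proves density of that \emph{larger} algebra, which is useless here: products $\bar g\cdot f$ with $g\in\sN(X)$, $f\in\sN(X,\sE)$ are not known to be nearly holomorphic, so the resulting dense module need not lie in the closure of $\sN(X,\sE)$. (What one would need is the reverse inclusion, that the conjugation-closed algebra lies in the closure of $\sN(X)$, and you give no argument for that.) Moreover, your supply of global nearly holomorphic functions is not substantiated: a compact Hermitian symmetric space has no global K\"ahler potential, so ``the $q$-map of the potential from an equivariant projective embedding, whose components are globally nearly holomorphic'' is not available as stated, and the local fact that $\sN(\sU)$ contains $\sO(\sU)$ and the $q_\ell$ produces nothing global. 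So both the point-separation and the conjugation-stability of a subalgebra of $\sN(X)$ remain unproven, and this is the heart of the matter, not bookkeeping.

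The paper closes exactly this gap by an explicit construction: the functions $\omega(Y)(u)=\kappa(Y,\Ad_uZ_0)$, $Y\in\mf g$, of \eqref{eq:omegamap} generate a unital subalgebra $\Omega\subseteq C^\infty(X)$ which is conjugation-invariant (via the Cartan involution, $\overline{\omega(Y)}=\omega(-\vartheta Y)$) and separates points (non-degeneracy of the Killing form together with $\Ad_uZ_0=Z_0\iff u\in K$), hence dense by Stone--Weierstrass (Proposition~\ref{prop:DensityOfOmega}); and then a Jordan-theoretic computation of $\omega(Y)\at{\Vp}$ shows each $\omega(Y)$ is a degree-one polynomial in the $q$-map $q(z)=\bar z^{-z}$ with holomorphic coefficients, so $\Omega\subseteq\sN(X)$ (Proposition~\ref{prop:OmegaNearlyHolomorphic}). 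These globally defined, conjugation-stable, point-separating nearly holomorphic functions are exactly the object your plan presupposes but never produces; without some construction of this kind (or another proof that $\sN(X)$, not an enlargement of it, is dense in $C(X)$), the proposal does not yield the theorem.
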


\begin{remark}
	We note that in particular, Theorem~\ref{thm:NearlyHolDens} applies to $G$-homogeneous 
	holomorphic vector bundles that admit non-trivial holomorphic sections. In the successive paper 
	\cite{S12}, we improve this statement by showing that in fact every $G$-homogeneous holomorphic
	vector bundle admits non-trivial nearly holomorphic sections, hence $\sN(X,\sE)$ is	dense in 
	$C(X,\sE)$ in any case.
\end{remark}

For the proof of Theorem~\ref{thm:NearlyHolDens}, we first show that it suffices to prove that the space $\sN(X)$ of nearly holomorphic functions is dense in $C(X)$. The basic idea of the next lemma is taken from \cite[Theorem~5]{Mos06}.

\begin{lemma}\label{lem:densityCrit}
	Assume that $\sA\subseteq C(X)$ is a dense subalgebra, and $\Sigma\subseteq C(X,\sE)$ is a 
	subset such that for each $x\in X$, the fiber $\sE_x$ is spanned by 
	$\Set{\sigma(x)}{\sigma\in\Sigma}$. Then, the $\sA$-module generated by $\Sigma$ is dense in 
	$C(X,\sE)$ with respect to uniform convergence.
\end{lemma}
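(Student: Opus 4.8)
The plan is to reduce the statement to a partition of unity argument combined with the density of $\sA$ in $C(X)$. First I would fix an arbitrary section $f \in C(X,\sE)$ and a target precision $\varepsilon > 0$, and aim to approximate $f$ uniformly within $\varepsilon$ by an element of the $\sA$-module generated by $\Sigma$. The starting observation is a local one: since $X$ is compact and each fiber $\sE_x$ is spanned by $\{\sigma(x) : \sigma \in \Sigma\}$, for every $x \in X$ I can pick finitely many sections $\sigma_1^{x}, \dots, \sigma_{N_x}^{x} \in \Sigma$ whose values at $x$ form a basis of $\sE_x$; by continuity (in a local trivialization near $x$) they remain linearly independent, and in fact a basis of every fiber, on some open neighborhood $\sU_x$ of $x$. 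On $\sU_x$ we may therefore write $f|_{\sU_x} = \sum_{j=1}^{N_x} g_j^{x}\, \sigma_j^{x}|_{\sU_x}$ for uniquely determined continuous functions $g_j^{x} \in C(\sU_x)$ (obtained by inverting the frame, which depends continuously on the point).

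Next I would pass from local to global by compactness: extract a finite subcover $\sU_{x_1}, \dots, \sU_{x_r}$ of $X$ and choose a continuous partition of unity $\{\chi_k\}_{k=1}^r$ subordinate to it, so $\chi_k \in C(X)$, $\operatorname{supp}\chi_k \subseteq \sU_{x_k}$, $\sum_k \chi_k \equiv 1$. Then $f = \sum_{k=1}^r \chi_k f = \sum_{k=1}^r \sum_{j=1}^{N_{x_k}} (\chi_k g_j^{x_k})\, \sigma_j^{x_k}$, where $\chi_k g_j^{x_k}$, extended by zero outside $\sU_{x_k}$, is a genuinely global element of $C(X)$ (the support condition guarantees continuity across the boundary of $\sU_{x_k}$). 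Thus $f$ lies in the $C(X)$-module generated by the finitely many sections $\sigma_j^{x_k} \in \Sigma$. Now I invoke the density of $\sA$: for each of the finitely many coefficient functions $\chi_k g_j^{x_k}$, choose $a_{k,j} \in \sA$ with $\|\chi_k g_j^{x_k} - a_{k,j}\|_\infty$ small; then $\sum_{k,j} a_{k,j}\, \sigma_j^{x_k}$ lies in the $\sA$-module generated by $\Sigma$ and, since there are only finitely many terms and each $\|\sigma_j^{x_k}\|_\infty$ is a fixed finite constant (using compactness of $X$ and a fixed norm on $\sE$ coming from the Hermitian structure), the triangle inequality bounds its distance from $f$ by a constant multiple of the coefficient errors, hence by $\varepsilon$ after choosing the approximations fine enough.

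The only genuinely delicate point is making sure the local coefficient functions $g_j^{x}$ are continuous and that the frames $(\sigma_j^{x})$ stay nondegenerate on a full neighborhood — this is where one uses that the span condition is an open condition (the determinant of the frame, read off in a local trivialization, is continuous and nonzero at $x$) together with the compactness of $X$ to get finitely many such neighborhoods. Everything else is the standard partition-of-unity-plus-Stone--Weierstrass-style bookkeeping; no use of the Kähler or symmetric structure is needed here, so the argument works verbatim for any Hermitian vector bundle on any compact space. I expect the write-up to be short.
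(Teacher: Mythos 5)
Your argument is correct, and it is essentially the proof the paper has in mind: the paper omits the details but explicitly indicates a partition of unity subordinate to a cover trivializing $\sE$, which is exactly your scheme of local frames extracted from $\Sigma$, continuous local coefficients, a partition of unity, and then uniform approximation of the finitely many coefficient functions by elements of $\sA$. No gaps; the openness of the frame condition and compactness of $X$ are handled correctly.
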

The proof of this lemma is straightforward, using the existence of a partition of unity corresponding to an open covering of $X$ that trivializes $\sE$. We omit the details.

In the setting of Theorem~\ref{thm:NearlyHolDens}, the action of $U$ on any non-trivial section $f\in C(X,\sE)$ generates a subset $\Sigma:=U.f\subseteq C(X,\sE)$, that satisfies the assumption of Lemma~\ref{lem:densityCrit}. Indeed, since $U$ acts transitively on $X$, there is a section $f'\in\Sigma$ with $f'(o)\neq 0$ for the base point $o=eK$. Since $\sE$ is irreducible, the $K$-action on $f'$ generates a spanning set $\Set{(k.f')(o)=\rho(k)^{-1}f'(o)}{k\in K}$ for the fiber $\sE_o$. Using again transitivity of the $U$-action on $X$, this also shows that for each $x\in X$, the fiber $\sE_x$ is spanned by $\set{f'(x)}{f'\in\Sigma}$.

Applying this argument to a non-trivial nearly holomorphic section $f\in\sN(X,\sE)$, and recalling that $\sN(X,\sE)$ is $U$-invariant, it follows that $\Sigma:=\sN(X,\sE)$ satisfies the assumption of Lemma~\ref{lem:densityCrit}. Moreover, since $\sN(X,\sE)$ is an $\sN(X)$-module (Corollary~\ref{cor:NearHolModule}), it therefore remains to show that $\sA:=\sN(X)$ is a dense subalgebra of $C(X)$. In other words, it suffices to prove Theorem~\ref{thm:NearlyHolDens} for the case of the trivial line bundle $\sE = X\times \CC$, noting that $\sN(X)$ is non-trivial since constant functions are nearly holomorphic.

We now give explicit generators for the subalgebra of nearly holomorphic functions. Let $\mf g=\Lie G$ be the Lie algebra of $G$, and let $\vartheta:\mf g\to\mf g$ denote the Cartan involution corresponding to the compact real form $\mf u = \Lie U$. Recall that $\mf g$ decomposes into $\mf g = \mf n^+\oplus\mf l\oplus\mf n^-$, where $\mf n^\pm$ are $\Ad_K$-invariant abelian subalgebras and $\mf l$ is the complexification of the Lie algebra $\mf k$ of $K$. Moreover, this decomposition is induced by a unique grading element $Z_0\in\mf z(\mf l)$ of the center of $\mf l$, so $\mf n^\pm = \Set{Y\in\mf g}{[Z_0,Y] = \pm Y}$. Let $\kappa:\mf g\times\mf g\to\CC$ be the Killing form of $\mf g$. The Levi-decomposition of the parabolic subgroup $P$ is given by $P = L\ltimes\exp(\mf n^-)$, where $L$ is reductive with Lie algebra $\mf l$. Consider the map
\begin{align}\label{eq:omegamap}
	\omega:\mf g\to C^\infty(X),\ Y\mapsto \omega(Y)(u):=\kappa(Y,\Ad_u Z_0)\;.
\end{align}
Since $\Ad_kZ_0=Z_0$ for all $k\in K$, it follows that $\omega(Y)$ indeed defines a smooth function on $X$.
Let $\Omega\subseteq C^\infty(X)$ be the unital subalgebra generated by $\omega(\mf g)$,
\begin{align}\label{eq:OmegaDef}
	\Omega := \CC[\omega(\mf g)]\subseteq C^\infty(X)\,.
\end{align}

Using the classical Stone--Weierstrass theorem, we show that $\Omega$ is dense in $C(X)$ (Proposition~\ref{prop:DensityOfOmega}), and by means of the Jordan theoretic description of Hermitian symmetric spaces we prove that $\Omega$ consists of nearly holomorphic functions (Proposition~\ref{prop:OmegaNearlyHolomorphic}). This completes the proof of Theorem~\ref{thm:NearlyHolDens}. 

\begin{proposition}\label{prop:DensityOfOmega}
	The subalgebra $\Omega$ is dense in $C(X)$ with respect to uniform convergence.
\end{proposition}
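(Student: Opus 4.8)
The plan is to verify the two hypotheses of the classical Stone--Weierstrass theorem for the unital subalgebra $\Omega \subseteq C(X)$: that $\Omega$ separates the points of $X$, and that $\Omega$ is closed under complex conjugation (being already unital). Separation of points is the heart of the matter; conjugation-closure should follow from the explicit form of $\omega$ together with the fact that $Z_0$ lies in the compact real form, as explained below.

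First I would reduce point-separation to a statement about the adjoint orbit. Writing a point of $X = U/K$ as $uK$, the family of functions $\omega(Y)$, $Y\in\mf g$, is, up to the non-degenerate pairing $\kappa$, nothing but the family of linear functionals $uK \mapsto \kappa(Y,\Ad_uZ_0)$; hence $\Omega$ separates $uK$ from $u'K$ as soon as $\Ad_u Z_0 \neq \Ad_{u'} Z_0$ in $\mf g$. So it suffices to show that the map $U/K \to \mf g$, $uK \mapsto \Ad_u Z_0$, is injective. This map is well defined because $\Ad_k Z_0 = Z_0$ for $k\in K$ (indeed $K$ is, up to covering, the centralizer of $Z_0$), and its injectivity amounts to the statement that the centralizer in $U$ of $Z_0$ is exactly $K$, i.e.\ $\Ad_u Z_0 = Z_0 \Rightarrow u\in K$. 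This is a standard fact: $Z_0$ is (a multiple of) the canonical central element of $\mf l$ defining the grading $\mf g = \mf n^+\oplus\mf l\oplus\mf n^-$, its $U$-centralizer has Lie algebra $\mf k$, and since $X=U/K$ is assumed simply connected (equivalently, $U$ simply connected and $K$ connected in the present setup), this centralizer is connected and hence equals $K$. I would cite \cite{He01} for this.

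Next I would check the conjugation property. For $u\in U$ the element $\Ad_u Z_0$ lies in $\Ad_U(\mf u\cap i\mf u^{\perp})$... more simply: $iZ_0 \in \mf u$ (the grading element is purely imaginary on the compact real form), so $\Ad_u(iZ_0)\in\mf u$ for all $u\in U$, and the restriction of $-i\kappa$ to $\mf u\times\mf u$ is real-valued (a negative multiple of an inner product). Therefore for $Y\in\mf g$ one computes $\overline{\omega(Y)(u)} = \overline{\kappa(Y,\Ad_uZ_0)} = \kappa(\vartheta\bar Y,\vartheta\overline{\Ad_uZ_0})$ using that $\kappa$ is real on the real form and $\vartheta$-invariant; since $\vartheta$ fixes $\mf u$ pointwise and $\overline{\Ad_u Z_0}$ differs from $\pm\Ad_uZ_0$ in a controlled way (here $\vartheta(Z_0) = -Z_0$ as $Z_0$ swaps $\mf n^+\leftrightarrow\mf n^-$), one gets $\overline{\omega(Y)} = \omega(Y')$ for a suitable $Y'\in\mf g$ (explicitly $Y' = -\vartheta\bar Y$). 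Hence $\overline\Omega = \Omega$. I would present this as a short direct computation rather than invoke abstract machinery.

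Finally, $\Omega$ contains the constants by definition (it is the \emph{unital} algebra generated by $\omega(\mf g)$), so Stone--Weierstrass applies and yields that $\Omega$ is dense in $C(X)$ for uniform convergence. I expect the main obstacle to be the clean identification of the centralizer of $Z_0$ with $K$ (i.e.\ the injectivity of $uK\mapsto\Ad_uZ_0$): everything hinges on the simple-connectedness assumptions built into the definition of $U$ and on the structure theory of the grading, and one must be careful that $K$ is exactly the $U$-stabilizer of the base point and coincides with the full $\Ad$-centralizer of $Z_0$, not merely its identity component.
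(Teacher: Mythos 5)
Your proposal is correct and follows essentially the same route as the paper: Stone--Weierstrass applied to the unital algebra $\Omega$, conjugation invariance via the compact real form (the paper computes $\overline{\omega(Y)}=\omega(-\vartheta(Y))$ using $\vartheta(Z_0)=-Z_0$, which is your computation up to the convention of whether $\vartheta$ already incorporates the conjugation), and point separation from the non-degeneracy of $\kappa$ together with the fact that $\Ad_uZ_0=Z_0$ if and only if $u\in K$. The only cosmetic difference is that the paper first reduces, by $U$-equivariance of $\omega$, to separating the base point from $uK\neq eK$, whereas you argue injectivity of $uK\mapsto\Ad_uZ_0$ directly and spell out the centralizer identification that the paper simply asserts.
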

\begin{proof}
Due to the Stone--Weierstrass theorem, it suffices to show that $\Omega$ is conjugation invariant and separates points. Conjugation invariance follows from
\begin{align*}
	\overline{\omega(Y)(u)}
		&= \overline{\kappa(Y,\Ad_uZ_0)} \\
		&= \kappa(\vartheta(Y),\vartheta(\Ad_uZ_0)) \\
		&= \kappa(\vartheta(Y),\Ad_u(-Z_0))\\
		&=\omega(-\vartheta(Y))(u)\;.
\end{align*}
Concerning the separation of points, we first note that the $\Ad_G$-invari\-ance of the Killing form immediately implies that $\omega$ is $U$-equivariant. Therefore, it suffices to show separation of the base point $eK\in X$ from any other point $uK\neq eK$, i.e., we have to show that for all $u\in U\setminus K$, there exists an element $Y\in\mf g$ such that $\omega(Y)(u)\neq\omega(Y)(e)$, or equivalently $\kappa(Y,Z_0-\Ad_uZ_0)\neq 0$. Since $\Ad_uZ_0 = Z_0$ if and only if $u\in K$, this is a consequence of the non-degeneracy of the Killing form $\kappa$ on $\mf g$.
\end{proof}

Recall that the exponential map induces a holomorphic map,
\[
	\mf n^+\hookrightarrow X=G/P,\ z\mapsto\exp(z)P,
\]
onto an open and dense subset of $X$. When identifying $\mf n^+$ with its image, we simply write $\mf n^+\subseteq X$ in the sequel. Moreover, we identify the holomorphic tangent space $T^{(1,0)}_z$ at $z\in\Vp$ with $\Vp$. Since the Killing form restricts to a non-degenerate pairing $\kappa|:\mf n^+\times\mf n^-\to\CC$, we may also identify the holomorphic cotangent space $(T^{(1,0)}_z)^*\cong(\mf n^+)^*$ with $\Vm$ via the isomorphism
\begin{align}\label{eq:KillingIsomorphism}
	\mf n^-\to(\mf n^+)^*,\ w\mapsto \big(v\mapsto-\kappa(v,w)\big).
\end{align}
Here, the sign is introduced to avoid corresponding signs in the formulas for the Kähler potential and its $q$-map in Lemma~\ref{lem:HermMetricAndKaehlerPot}.

In order to determine the restriction of $\omega(Y)$ to $\mf n^+$ (and hence to show that $\omega(Y)$ is nearly holomorphic), we briefly recall some notions from Jordan theory. For a detailed introduction, we refer to \cite{Lo75,Lo77}, see also \cite{BN04}. In particular, we use the list of Jordan identities from the appendix of \cite{Lo77} and refer to single identities by \JP{xy}.

The Lie bracket on $\mf g$ defines a Jordan pair structure on $(\mf n^+,\mf n^-)$ given by
\[
	\mf n^\pm\times\mf n^\mp\times\mf n^\pm\to\mf n^\pm,\
	(x,y,z)\mapsto\JTP{x}{y}{z}_\pm:=-[[x,y],z]\;.
\]
As usual, we omit the indices $\pm$ on the triple product $\JTP{-}{-}{-}$, and define additional operators by
\[
	Q_xy:=\tfrac{1}{2}\JTP{x}{y}{x}
	\quad\text{and}\quad
	D_{x,y}z := Q_{x,z}y := \JTP{x}{y}{z}\;.
\]
The context determines the domains of these operators, e.g.\ for $x\in\mf n^+$ the operator $Q_x$ is a linear map from $\Vm$ to $\Vp$. In the following, let $\Tr_\pm T$ and $\Det_\pm T$ denote the trace and the determinant of endomorphisms on $\mf n^\pm$. In particular, if $h$ is an element of the Levi factor $L\subseteq P$, then the restrictions of the adjoint action of $h$ to $\mf n^+$ and $\mf n^-$ define isomorphisms,
\[
	h^\pm:\mf n^\pm\to\mf n^\pm,\ x\mapsto h^\pm x:=\Ad_hx\,.
\]
Likewise, if $T$ is an element of the Lie algebra $\mf l$ of $L$, then
\[
	T^\pm:\mf n^\pm\to\mf n^\pm,\ x\mapsto T^\pm x:=[T,x]
\]
are endomorphisms of $\mf n^+$ and $\mf n^-$. In fact, the pair $(h^+,h^-)$ (resp. $(T^+,T^-)$) is an \emph{automorphism} (resp. \emph{derivation}) of the Jordan pair $(\mf n^+,\mf n^-)$. As an example, we note that for $x\in\mf n^+$ and $y\in\mf n^-$ we have $T:=[x,y]\in\mf l$ with $T^+ = D_{x,y}$ and $T^- = -D_{y,x}$.
By abuse of notation, we omit the indices $\pm$ and simply write $hx=\Ad_hx$ and $Tx=[T,x]$ for $x\in\mf n^\pm$. Using the Killing form it is straightforward to show that $\Det_- h = (\Det_+ h)^{-1}$ and $\Tr_- T = -\Tr_+ T$.
 
Via the open and dense embedding $\mf n^+\subseteq X$, we may identify the elements of the Lie algebra with certain holomorphic vector fields on $\mf n^+$, i.e., holomorphic maps $\zeta\in\sO(\mf n^+,\mf n^+)$. More precisely,
\begin{align*}
	\mf n^+ &\cong \Set{\zeta(z) = u}{u\in\Vp} & &\text{(constant vector fields),}\\
	\mf n^- &\cong \Set{\zeta(z) = Q_zv}{v\in\Vm}& &\text{(quadratic vector fields),}
\end{align*}
and $\mf l$ corresponds to linear vector fields, $\zeta(z) = Tz$ for $T\in\mf l$. According to the decomposition $\mf g = \mf n^+\oplus\mf l\oplus\mf n^-$, we write $(u,T,v)$ with $u\in\Vp$, $T\in\mf l$, $v\in\Vm$ for elements in $\mf g$. The distinguished element $Z_0\in\mf l$ is identified with the vector field corresponding to the identity, so $Z_0=(0,\Id_{\Vp},0)$. The commutator of two elements $X_1= (v_1,T_1,w_1)$ and $X_2 = (v_2,T_2,w_2)$ is given by
\begin{align*}
	[X_1,X_2] = \big(T_1v_2-T_2v_1,\ D_{v_2,w_1} + [T_1,T_2] - D_{v_1,w_2},\ 
							T_2w_1-T_1w_2\big),
\end{align*}
and the Killing form $\kappa$ on $\mf g$ translates to 
\begin{align}\label{eq:KillingForm}
	\kappa(X_1,X_2) = \kappa_\mf l(T_1,T_2) + 2\,\Tr_+(T_1T_2 - D_{v_1,w_2} - D_{v_2,w_1})\;,
\end{align}
where $\kappa_\mf l$ denotes the Killing form on $\mf l$, see e.g.\ \cite[I.7.8]{Sat80}.

 A particularly important operator is the \emph{Bergman operator} defined by
\begin{align}\label{eq:BergmanOperator}
	\B{x}{y}:\mf n^+\to\mf n^+,\ \B{x}{y}:=\Id_{\mf n^+} - D_{x,y} + Q_xQ_y
\end{align}
for $x\in\mf n^+$, $y\in\mf n^-$. Its determinant is the power of an irreducible polynomial, $\Delta$, on $\mf n^+\times\mf n^-$,
\begin{align}\label{eq:BergmanAndDeterminant}
	\Det_+\B{x}{y} = \Delta(x,y)^p.
\end{align}
The polynomial $\Delta$ is called the \emph{Jordan pair determinant} (or the \emph{generic norm}, cf.\ \cite{Lo75}), and the exponent $p$ is one of the structure constants of the irreducible Hermitian symmetric space. The pair $(x,y)\in\mf n^+\times\mf n^-$ is called \emph{quasi-invertible} if $\Delta(x,y)\neq 0$, or equivalently if $\B{x}{y}$ is invertible. In this case,
\begin{align}\label{eq:quasiinverse}
	x^y:=\B{x}{y}^{-1}(x-Q_xy)\in\mf n^+
\end{align}
is the \emph{quasi-inverse} of $(x,y)$. In fact, the quasi-inverse is precisely the element obtained by the action of $\exp(y)\in G$ on $x\in\mf n^+\subseteq X$, see \cite[\S\,8.4]{Lo77}. Exchanging $x$ and $y$ in \eqref{eq:BergmanOperator} and \eqref{eq:quasiinverse}, defines the (dual) Bergman operator $\B{y}{x}\in\End(\mf n^-)$ and the corresponding quasi-inverse $y^x\in\mf n^-$, if it exists. It is well-known that $\B{x}{y}$ is invertible if and only if $\B{y}{x}$ is invertible, and in this case there is an element of the Levi factor $h\in L$ such that $(h^+,h^-) = (\B{x}{y},\B{y}{x}^{-1})$. By abuse of notation, we may write $\B{x}{y}\in L$. 

The restriction of the Cartan involution $\vartheta$ of $\mf g$ to $\mf n^+$ and $\mf n^-$ yields isomorphisms $\vartheta|:\mf n^\pm\to\mf n^\mp$, which are mutual inverses. For convenience, both isomorphisms are denoted by $x\mapsto\bar x$ for $x\in\mf n^\pm$. It is well-known that for all $z\in\mf n^+$ the Bergman operator $\B{z}{-\bar z}$ is a positive definite operator with respect to the Hermitian inner product
\begin{align}\label{eq:InnerProduct}
	(v|w):=-\kappa(v,\bar w)\quad\text{with}\quad v,w\in\mf n^+,
\end{align}
see e.g.\ \cite[\S\,3.15]{Lo77}. In the realization of $\mf g$ as vector fields on $\Vp$, the Cartan involution on $\mf g$ reads
\[
	\vartheta(u,T,v) = (\overline v,-T^*,\overline u)\,,
\]
where $T^*\in\End(\Vp)$ denotes the adjoint of $T\in\End(\Vp)$ with respect to the inner product \eqref{eq:InnerProduct}.

\begin{lemma}\label{lem:ChartLift}
	The map
	\[
		\mf n^+\to U,\ z\mapsto u_z:=\exp(z)\cdot\B{z}{-\bar z}^\half\cdot\exp(\bar z)
	\]
	defines a smooth lift of the embedding $\mf n^+\hookrightarrow X$, i.e., the following diagram 
	is commutative
	\begin{center}
	\begin{tikzpicture}[description/.style={fill=white,inner sep=2pt},
											baseline=0pt]
		\matrix (m) [matrix of math nodes, row sep=3em,
								 column sep=2.5em, text height=1.5ex, text depth=0.25ex, outer sep=4pt]
		{  & U \\
			\mf n^+ & G=X/P \\};
		\path[right hook->] (m-2-1) edge (m-1-2);
		\path[->>] (m-1-2) edge (m-2-2);
		\path[right hook->] (m-2-1) edge (m-2-2);
	\end{tikzpicture}
	\quad with\qquad
	\begin{tikzpicture}[description/.style={fill=white,inner sep=2pt},
											baseline=0pt]
		\matrix (m) [matrix of math nodes, row sep=3em,
								 column sep=2.5em, text height=1.5ex, text depth=0.25ex, outer sep=8pt]
		{  & u_z \\
			z & \exp(z)P = u_z P\ . \\};
		\path[|->] (m-2-1) edge (m-1-2);
		\path[|->] (m-1-2) edge (m-2-2);
		\path[|->] (m-2-1) edge (m-2-2);
	\end{tikzpicture}
	\end{center}
\end{lemma}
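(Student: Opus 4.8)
\noindent\emph{Proof plan.}
The statement contains four assertions: that $\B z{-\bar z}^\half$ makes sense as an element of the group, that $z\mapsto u_z$ is smooth, that $u_zP=\exp(z)P$, and that $u_z\in U$. The first three are routine; the last is the point.

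\smallskip
\noindent\emph{Well-definedness, smoothness, and $u_zP=\exp(z)P$.}
Being invertible, $\B z{-\bar z}$ is, as an operator on $\Vp$, the image of some $h\in L$ (as recalled before the lemma), and by hypothesis it is positive definite and self-adjoint for $(v|w)=-\kappa(v,\bar w)$. In the Cartan decomposition $L=K\exp(i\mf k)$ --- where $K$ acts on $\Vp$ by unitary and $\exp(i\mf k)$ by positive self-adjoint operators, because $\vartheta$ preserves $\mf l$ and acts there by $T\mapsto-T^*$ --- uniqueness of the polar decomposition forces $\log\B z{-\bar z}$ to lie in the image of $i\mf k\subseteq\mf l$; accordingly put $\B z{-\bar z}^\half:=\exp(\tfrac12\log\B z{-\bar z})\in L$, whose action on $\Vp$ is the positive square root of $\B z{-\bar z}$. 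Since $\vartheta(\log\B z{-\bar z})=-\log\B z{-\bar z}$, the conjugate-linear involution $\Theta\colon G\to G$ integrating $\vartheta$ (for which $U=G^\Theta$, as $G$ is simply connected) satisfies $\Theta(\B z{-\bar z}^\half)=\B z{-\bar z}^{-\half}$. The map $z\mapsto\B z{-\bar z}$ is polynomial and $z\mapsto\log\B z{-\bar z}$ is then real-analytic, so $z\mapsto u_z$ is smooth as a product of smooth $G$-valued maps. Finally $\B z{-\bar z}^\half\in L\subseteq P$ and $\exp(\bar z)\in\exp(\Vm)\subseteq P$ (since $\bar z=\vartheta(z)\in\Vm$ and $P=L\ltimes\exp(\Vm)$), hence $u_z\in\exp(z)\,P$, i.e.\ $u_zP=\exp(z)P$.

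\smallskip
\noindent\emph{The point: $u_z\in U$.}
Applying $\Theta$ factor by factor and using $\vartheta(z)=\bar z$, $\vartheta(\bar z)=z$ gives $\Theta(u_z)=\exp(\bar z)\,\B z{-\bar z}^{-\half}\,\exp(z)$, so $u_z\in U$ is equivalent to $\exp(z)\B z{-\bar z}^\half\exp(\bar z)=\exp(\bar z)\B z{-\bar z}^{-\half}\exp(z)$, and hence (multiply by $\exp(-\bar z)$ on both sides and by $\B z{-\bar z}^\half$ on the left) to
\[
\B z{-\bar z}^\half\cdot\bigl(\exp(-\bar z)\exp(z)\bigr)\cdot\B z{-\bar z}^\half=\exp(z)\exp(-\bar z).
\]
Now I would invoke the Harish--Chandra (Bruhat) factorization $\exp(\Vm)\exp(\Vp)\subseteq\exp(\Vp)\cdot L\cdot\exp(\Vm)$ for quasi-invertible pairs (cf.\ \cite[\S\,8]{Lo77}): since $(z,-\bar z)$ is quasi-invertible,
\[
\exp(-\bar z)\exp(z)=\exp\bigl(z^{-\bar z}\bigr)\cdot\B z{-\bar z}^{-1}\cdot\exp\bigl((-\bar z)^z\bigr),
\]
where $z^{-\bar z}=\B z{-\bar z}^{-1}(z+Q_z\bar z)\in\Vp$ and $(-\bar z)^z\in\Vm$ are the quasi-inverses. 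Substituting this, conjugating the unipotent factors past the (mutually commuting) powers of $\B z{-\bar z}$ in $L$, and using uniqueness of the $\exp(\Vp)\cdot\exp(\Vm)$-part of such a factorization ($\Vp\to G/P$, $v\mapsto\exp(v)P$, and $\exp\colon\Vm\to\exp(\Vm)$ being injective), the displayed identity reduces to
\[
\B z{-\bar z}^\half\,z^{-\bar z}=z\qquad\text{and}\qquad\B z{-\bar z}^{-\half}\,(-\bar z)^z=-\bar z,
\]
the second being the image of the first under $\vartheta$ (using $\vartheta(Q_z\bar z)=Q_{\bar z}z$ and the relation between $\B z{-\bar z}$ and the dual operator $\B{-\bar z}{z}$). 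By the formula for $z^{-\bar z}$, the first is equivalent to the Jordan identity $\B z{-\bar z}^\half\,z=z+Q_z\bar z$.

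\smallskip
\noindent\emph{Main obstacle.}
The identity $\B z{-\bar z}^\half\,z=z+Q_z\bar z$ is the heart of the matter, being the place where one must pass from the operator-theoretic square root $\B z{-\bar z}^\half$ to a Jordan-algebraic expression. I would derive it from the spectral decomposition $z=\sum_j\lambda_j c_j$ of $z$ in the Hermitian Jordan triple system $(\Vp,\vartheta)$ (pairwise orthogonal tripotents $c_j$, scalars $\lambda_j>0$): Peirce orthogonality makes $\B z{-\bar z}$ and $Q_z$ act diagonally on $\bigoplus_j\CC c_j$, with $\B z{-\bar z}c_j=(1+\lambda_j^2)^2 c_j$ and $Q_z\bar z=\sum_j\lambda_j^3 c_j$, so that $\B z{-\bar z}^\half z=\sum_j\lambda_j(1+\lambda_j^2)c_j=z+Q_z\bar z$. (Alternatively one checks this directly in each of the standard matrix models --- there $\B z{-\bar z}$ is left times right multiplication by two commuting positive operators intertwined by $z$ --- or simply quotes the classical formula for the Harish--Chandra-type chart of a compact Hermitian symmetric space.) Granting this identity, together with the sign- and convention-bookkeeping in the factorization above, the assembly of the three pieces is formal.
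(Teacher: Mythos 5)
Your argument is correct, but it is a genuinely different route from the paper's. The paper disposes of the lemma in two lines by citing Loos \cite[\S\,9.7f]{Lo77} for exactly the two facts at stake: that $\B{z}{-\bar z}^\half$ lies in the Levi factor $L$ and that $u_z=\exp(w+\bar w)$ for some $w\in\Vp$, whence $u_z\in U$ and the commutativity of the diagram is immediate from $P=L\ltimes\exp(\Vm)$. You instead prove $\Theta(u_z)=u_z$ directly: after building $\B{z}{-\bar z}^\half\in\exp(i\mf k)\subseteq L$ by polar decomposition (so that $\Theta$ inverts it), you reduce the fixed-point equation via the factorization $\exp(-\bar z)\exp(z)=\exp(z^{-\bar z})\B{z}{-\bar z}^{-1}\exp((-\bar z)^z)$ --- the same Loos \S\,8.11 identity the paper quotes later for the local $G$-action --- and uniqueness of the $\exp(\Vp)\cdot L\cdot\exp(\Vm)$ factorization, to the single identity $\B{z}{-\bar z}^\half z=z+Q_z\bar z$, which you verify by the spectral decomposition $z=\sum_j\lambda_jc_j$ (your eigenvalue computations $\B{z}{-\bar z}c_j=(1+\lambda_j^2)^2c_j$ and $Q_z\bar z=\sum_j\lambda_j^3c_j$ check out, and the $\Vm$-side identity does follow from the $\Vp$-side one by applying $\vartheta$, since $\vartheta$ conjugates $(\B{z}{-\bar z}^{\half})^+$ into $(\B{z}{-\bar z}^{-\half})^-$). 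What your route buys is a self-contained proof, relying only on Jordan facts already in circulation in the paper, and as a by-product it establishes $\B{z}{-\bar z}^{-\half}z=z^{-\bar z}$, which the paper later has to extract from $u_z=u_{-z}^{-1}$; what it costs is length and some bookkeeping you currently gloss: the passage from the operator logarithm to an element of $\mf l$ (and hence smoothness of $z\mapsto u_z$) uses injectivity of $\ad:\mf l\to\End(\Vp)$, and concluding $u_z\in U$ from $\Theta(u_z)=u_z$ uses connectedness of $G^\Theta$ for the simply connected $G$; both are standard and easily supplied.
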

\begin{proof}
It follows from \cite[\S\,9.7f]{Lo77} that $u_z$ can be written as $u_z = \exp(w + \bar w)$ for some $w\in\mf n^+$, and that $\B{z}{-\bar z}^\half$ is an element of the Levi factor $L\subseteq P$. Therefore, $u_z$ is indeed an element of $U$, and since $P=L\ltimes\exp(\mf n^-)$, we readily obtain the commutativity of the diagram.
\end{proof}

The following lemma provides explicit formulas for the Kähler metric of $X$ (more precisely, for the corresponding sesquilinear form on the holomorphic tangent space) and for a Kähler potential on the open and dense subset $\mf n^+\subseteq X$. Up to a normalizing constant, the $U$-invariant Kähler metric is unique.

\begin{lemma}\label{lem:HermMetricAndKaehlerPot}
	For all $z\in\mf n^+\subseteq X$, the Kähler metric is given by
	\[
		h_z:\mf n^+\times\mf n^+\to\CC,\ h_z(v,w)=(\B{z}{-\bar z}^{-1}v|w).
	\]
	Moreover, $\Psi:\mf n^+\to\RR$ defined by
	\[
		\Psi(z):=2p\,\log \Delta(z,-\bar z)
	\]
	is a Kähler potential with corresponding $q$-map, $q:\mf n^+\to\mf n^-$, given by
	\[
		q(z)=\partial\Psi(z) = \bar z^{-z}\,,
	\]
	where we have identified $(\mf n^+)^*$ with $\mf n^-$ via \eqref{eq:KillingIsomorphism}.
\end{lemma}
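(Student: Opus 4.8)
The plan is to compute everything explicitly in the chart $\mf n^+\subseteq X$ using the lift $u_z$ from Lemma~\ref{lem:ChartLift} and the Jordan-theoretic formula \eqref{eq:KillingForm} for the Killing form. First I would establish the formula for the metric. Since the $U$-invariant K\"ahler metric is unique up to scale, it suffices to exhibit a $U$-invariant sesquilinear form on $T^{(1,0)}$ and fix the normalization at the base point $z=0$, where $\B{0}{0}=\Id$ and $h_0(v,w)=(v|w)$ is the inner product \eqref{eq:InnerProduct}. To verify $U$-invariance of $h_z(v,w)=(\B{z}{-\bar z}^{-1}v|w)$, I would use the transformation behaviour of the Bergman operator under the $G$-action: for $g\in U$ sending $z$ to $z'=g.z$, the differential $d g_z:\mf n^+\to\mf n^+$ satisfies a cocycle identity relating $\B{z'}{-\bar{z'}}$ to $\B{z}{-\bar z}$ via $dg_z$ and its adjoint (this is standard Jordan pair theory, essentially the statement that $\B{z}{-\bar z}$ transforms like an automorphy factor; see \cite[\S\,3]{Lo77} and the positivity cited after \eqref{eq:InnerProduct}). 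Combined with $dg_z$ being a complex-linear isometry between the respective inner products twisted by the Bergman operators, this gives $h_{z'}(dg_z v, dg_z w)=h_z(v,w)$, as required.

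Next I would identify the K\"ahler potential. The function $\Psi(z)=2p\log\Delta(z,-\bar z)$ is real-valued because $\Delta(z,-\bar z)=\Det_+\B{z}{-\bar z}^{1/p}$ (up to the $p$-th root issue, which is harmless since $\B{z}{-\bar z}$ is positive definite, hence $\Det_+\B{z}{-\bar z}>0$), and $\B{z}{-\bar z}$ is a positive definite operator, so $\Delta(z,-\bar z)>0$. To check that $\Psi$ is a potential for $h$, I must verify $h_{i\bar j}=\partial_i\bar\partial_j\Psi$, i.e. $\partial\bar\partial\Psi$ equals the metric form. Here the efficient route is to first compute $q=\partial\Psi$ and then differentiate once more. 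Using \eqref{eq:BergmanAndDeterminant} and the standard derivative formula $\partial_v\log\Det_+\B{z}{-\bar z}=-\Tr_+(\B{z}{-\bar z}^{-1}D_{v,-\bar z})$ (differentiating $\Det_+(\Id-D_{z,-\bar z}+Q_zQ_{-\bar z})$ in the $z$-variable and using the Jordan identities \JP{xy} relating the derivative of the Bergman operator to $D$-operators), one gets $\partial\Psi(z)\cdot v = -2\Tr_+(\B{z}{-\bar z}^{-1}D_{v,-\bar z})$. The claim $q(z)=\bar z^{-z}$ then amounts, under the identification \eqref{eq:KillingIsomorphism} and the Killing-form formula \eqref{eq:KillingForm}, to the Jordan identity expressing the quasi-inverse $\bar z^{-z}$ in terms of the Bergman operator, namely $\bar z^{-z}=\B{-\bar z}{z}^{-1}(-\bar z - Q_{-\bar z}z)$, together with a trace identity of the shape $\kappa(v,\bar z^{-z}) = 2\Tr_+(\B{z}{-\bar z}^{-1}D_{v,-\bar z})$; I would derive this by expanding both sides in Jordan operators and invoking the pertinent identities from the appendix of \cite{Lo77}. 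The sign in \eqref{eq:KillingIsomorphism} is precisely what makes the final formula come out without a minus sign.

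Finally, to confirm that this $\Psi$ really induces the metric $h_z$ of the first part, I would differentiate $q(z)=\bar z^{-z}$ once more in the $\bar z$-direction; by the formula for the differential of the quasi-inverse map (again \cite[\S\,3]{Lo77}), $\bar\partial_w(\bar z^{-z})$ is governed by $\B{z}{-\bar z}^{-1}$, and one lands exactly on $h_z(v,w)=(\B{z}{-\bar z}^{-1}v|w)$ after using \eqref{eq:KillingIsomorphism} once more. I expect the main obstacle to be bookkeeping: correctly matching the various sign conventions (the sign in \eqref{eq:KillingIsomorphism}, the sign in the triple product $\JTP{x}{y}{z}=-[[x,y],z]$, and the $-\bar z$ versus $\bar z$ arguments) and invoking precisely the right Jordan identities for the derivatives of $\Det_+\B{z}{-\bar z}$ and of the quasi-inverse, rather than any conceptual difficulty. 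As a sanity check I would verify the formulas at $z=0$ (where $q(0)=0$, $\partial q|_0=\Id$ under the identifications) and note that $U$-invariance of $h$ plus uniqueness then forces agreement everywhere, which also provides an independent confirmation of the potential computation.
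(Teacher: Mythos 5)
Your overall strategy coincides with the paper's: establish the metric formula by invariance plus normalization at the base point, obtain $q=\partial\Psi$ by differentiating $2\log\Det_+\B{z}{-\bar z}$ and recognizing the quasi-inverse through Jordan identities and the Killing-form pairing \eqref{eq:KillingIsomorphism}, then recover $h_z$ from $\delbar\partial\Psi$ via the derivative of the quasi-inverse. (For the metric part the paper is slightly more direct: instead of a cocycle identity for the Bergman operator plus uniqueness of the invariant metric, it uses the explicit lift $u_z\in U$ of Lemma~\ref{lem:ChartLift}, for which $u_z(x)=z+\B{z}{-\bar z}^{\half}x^{\bar z}$ gives $\partial u_z(0)=\B{z}{-\bar z}^{\half}$, and then transports $h_0=(\cdot|\cdot)$; note also that your invariance argument needs some care since elements of $U$ do not preserve the chart $\mf n^+$.)

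However, the concrete identities you commit to in the central computation are wrong, and the step as written would fail. The holomorphic derivative of the Bergman operator is $\del_v\B{z}{-\bar z}=-D_{v,-\bar z}+Q_{v,z}Q_{-\bar z}$; your formula $\del_v\log\Det_+\B{z}{-\bar z}=-\Tr_+\bigl(\B{z}{-\bar z}^{-1}D_{v,-\bar z}\bigr)$ drops the quadratic term, and the trace identity you plan to prove, $\kappa(v,\bar z^{-z})=2\Tr_+\bigl(\B{z}{-\bar z}^{-1}D_{v,-\bar z}\bigr)$, is false. Already for $X=\PP^1$ (Jordan pair $(\CC,\CC)$ with $Q_xy=x^2y$, $D_{x,y}=2xy$) one has $\B{z}{-\bar z}=(1+|z|^2)^2$ and $\bar z^{-z}=\bar z/(1+|z|^2)$, so the left side is $-4v\bar z/(1+|z|^2)$ while the right side is $-4v\bar z/(1+|z|^2)^2$; they differ by a factor $(1+|z|^2)$, which is exactly the missing $Q_{v,z}Q_{-\bar z}$ contribution. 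The correct chain, which is what the paper carries out, is
\begin{align*}
	\del_v\Psi
	= 2\,\Tr_+\bigl(\B{z}{-\bar z}^{-1}(-D_{v,-\bar z}+Q_{v,z}Q_{-\bar z})\bigr)
	= 2\,\Tr_+ D_{v,\bar z^{-z}}
	= -\kappa(v,\bar z^{-z})\,,
\end{align*}
where the middle equality is the Jordan identity \JP{31} and the last uses \eqref{eq:KillingForm}; only then does \eqref{eq:KillingIsomorphism} give $q(z)=\bar z^{-z}$. (There is also a sign slip in your quasi-inverse expression: $\B{-\bar z}{z}^{-1}(-\bar z-Q_{-\bar z}z)=-\bar z^{-z}$, being the quasi-inverse of $(-\bar z,z)$ rather than of $(\bar z,-z)$.) Your final step, $\delbar_{\bar w}q(z)=\B{\bar z}{-z}^{-1}\bar w$ leading to $h_z(v,w)=(\B{z}{-\bar z}^{-1}v|w)$, is fine and agrees with the paper once the $q$-map is correctly established.
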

\begin{proof}
At the base point $o=eP\in X$, the Kähler metric is $K$-invariant, and since the inner product $(-|-)$ defined in \eqref{eq:InnerProduct} is the only $K$-invariant inner product on $\mf n^+$ (up to a constant), it follows that $h_0(v,w) = (v|w)$. For $z\in\Vp$, Lemma~\ref{lem:ChartLift} yields that 
\[
	h_z(v,w) = h_0(\partial u_z(0)^{-1}v,\partial u_z(0)^{-1}w)
	= (\B{z}{-\bar z}^{-\half}v,\B{z}{-\bar z}^{-\half}w) = (\B{z}{-\bar z}^{-1}v,w)\,,
\]
where we have used that $u_z(x) = z+\B{z}{-\bar z}^\half x^{\bar z}$, and the holomorphic derivative of $(x\mapsto x^{\bar z})$ is given by $\partial_v(x^{\bar z}) = \B{x}{\bar z}^{-1}v$, see \cite[\S\,7.8]{Lo77}. Next, we determine $\del_v\Psi$. Since $\tfrac{d}{dt}(\Det_+ A_t) = \Det_+ A_t\cdot\Tr_+(A_t^{-1}\tfrac{d}{dt} A_t)$ for any smooth curve $t\mapsto A_t$ into $\End(\mf n^+)$, we obtain
\begin{align*}
	\del_v\Psi &= 2\,\del_v\log\Det_+\B{z}{-\bar z} 
		= 2\,\Tr_+\big(\B{z}{-\bar z}^{-1}(-D_{v,\bar z} + Q_{v,z}Q_{\bar z})\big)\;.
\end{align*}
Due to \JP{31}, this simplifies to $\del_v\Psi=2\Tr_+ D_{v,{\bar z}^{-z}}$, and due to \eqref{eq:KillingForm}, we conclude that $\del_v\Psi = -\kappa(v,{\bar z}^{-z})$ and hence $q(z) = \bar z^{-z}$. Finally,
\[
	\delbar_{\bar w}\del_v\Psi(z)
		= -\delbar_{\bar w}\kappa(v,{\bar z}^{-z})
		= -\kappa(v,\B{\bar z}{-z}^{-1}\bar w)
		= -\kappa(\B{z}{-\bar z}^{-1}v,\bar w)
		= (\B{z}{-\bar z}^{-1} v, w)\,.
\]
This completes the proof.
\end{proof}

Now, we are prepared to determine the local description of the smooth functions on $X$ defined by $\omega$ in \eqref{eq:omegamap}. 

\begin{proposition}\label{prop:OmegaNearlyHolomorphic}
	For any $Y\in\mf g$, the function $\omega(Y)\in C^\infty(X)$ is nearly holomorphic.
	More precisely, for $Y=(v,T,w)\in\mf g$, the restriction of $\omega(Y)$ to $\Vp\subseteq X$ is 
	given by
	\begin{align*}
		\omega(Y)\at{\Vp}(z) 
			&= 2\,\Tr_+ T -\kappa(z,w) +\kappa(v-Tz + Q_zw,\,q(z)).
	\end{align*}
	Therefore, $\Omega\subseteq\sN(X)$.
\end{proposition}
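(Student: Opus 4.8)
The plan is to compute $\omega(Y)\at{\Vp}(z)=\kappa(Y,\Ad_{u_z}Z_0)$ explicitly using the smooth lift $z\mapsto u_z=\exp(z)\cdot\B{z}{-\bar z}^\half\cdot\exp(\bar z)$ from Lemma~\ref{lem:ChartLift}, and the formula \eqref{eq:KillingForm} for the Killing form in the vector-field realization $\mf g=\Vp\oplus\mf l\oplus\Vm$. The right-hand side of the asserted formula is manifestly a polynomial in the components $q_\ell$ of the $q$-map $q(z)=\bar z^{-z}$ (from Lemma~\ref{lem:HermMetricAndKaehlerPot}) with holomorphic coefficients: the terms $2\,\Tr_+T$ and $-\kappa(z,w)$ are holomorphic, and $\kappa(v-Tz+Q_zw,\,q(z))$ is linear in $q(z)$ with coefficient a holomorphic $\mf n^-$-valued map $z\mapsto -(v-Tz+Q_zw)$ (here we use the identification \eqref{eq:KillingIsomorphism}). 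Hence once the formula is established, Proposition~\ref{prop:LocalDescription} immediately gives $\omega(Y)\at{\Vp}\in\sN(\Vp)$, and since $\Vp$ is open and dense in $X$ and $\omega(Y)$ is globally smooth, the identity theorem (Theorem~\ref{thm:IdTheorem}) — or rather the sheaf property together with density — shows $\omega(Y)\in\sN(X)$. Since $\Omega$ is generated as an algebra by the $\omega(Y)$ and $\sN(X)$ is an algebra (Corollary~\ref{cor:NearHolModule}), this yields $\Omega\subseteq\sN(X)$.

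The main computation proceeds in steps. First I would write $\Ad_{u_z}Z_0=\Ad_{\exp(z)}\Ad_{h}\Ad_{\exp(\bar z)}Z_0$ with $h:=\B{z}{-\bar z}^\half\in L$. Since $h\in L$ acts on the grading element $Z_0=(0,\Id_{\Vp},0)$ by an inner automorphism of $\mf l$ fixing its center, $\Ad_hZ_0=Z_0$. Next, $\Ad_{\exp(\bar z)}Z_0=Z_0+[\bar z,Z_0]+\tfrac12[\bar z,[\bar z,Z_0]]+\cdots$; because $\bar z\in\Vm$ and $[Z_0,\Vm]=-\Vm$ while $[\Vm,\Vm]=0$, the series terminates: $[\bar z,Z_0]=\bar z$ (up to sign from the grading, $[Z_0,\bar z]=-\bar z$ so $[\bar z,Z_0]=\bar z$) and all higher brackets vanish, giving $\Ad_{\exp(\bar z)}Z_0=Z_0+\bar z$, i.e.\ the element $(0,\Id,0)+(0,0,\bar z)$ in our coordinates. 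Then I apply $\Ad_{\exp(z)}$ with $z\in\Vp$: here the relevant brackets are $[z,Z_0]=-z$ (from $[Z_0,z]=z$), $[z,\bar z]=-D_{z,\bar z}\in\mf l$ up to the sign in our triple-product convention, and a further bracket $[z,[z,\bar z]]$ landing in $\Vp$, after which the nilpotency stops the expansion. Carefully bookkeeping the three $\Vp$-, $\mf l$-, $\Vm$-components of $\Ad_{u_z}Z_0$ (using the commutator formula for $[X_1,X_2]$ displayed before \eqref{eq:KillingForm}), and then pairing against $Y=(v,T,w)$ via \eqref{eq:KillingForm}, should collapse to the stated expression after applying the Jordan identities — in particular the relation between $\B{\bar z}{-z}^{-1}\bar z$ and $q(z)=\bar z^{-z}$ used in the proof of Lemma~\ref{lem:HermMetricAndKaehlerPot}, and identities such as \JP{31} to rewrite the $\mf l$-component.

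The hard part will be the third step: correctly tracking signs and the truncated $\Ad_{\exp(z)}$-series on each graded component, and recognizing the resulting combinations of Jordan operators as $\kappa(v-Tz+Q_zw,\,q(z))$. A cleaner route, which I would try first to avoid brute force, is to use $U$-equivariance of $\omega$ together with the $G$-action on $\Vp\subseteq X$: the element $\exp(\bar z)\in G$ sends the base point $o$ to $z\in\Vp$, and $\omega(Y)\at{\Vp}(z)=\omega(Y)(\exp(\bar z)\cdot o)=\omega(\Ad_{\exp(-\bar z)}Y)(o)$. Evaluating at the base point is trivial since $\Ad_{u_o}Z_0=Z_0$, so $\omega(Y)\at{\Vp}(z)=\kappa(\Ad_{\exp(-\bar z)}Y,\,Z_0)$, and now one only needs the finite $\Ad_{\exp(-\bar z)}$-expansion of $Y=(v,T,w)$ and a single pairing with $Z_0$ via \eqref{eq:KillingForm}; the dependence on $z$ enters holomorphically through $\bar z$ being paired, and the quasi-inverse $q(z)=\bar z^{-z}$ emerges from resumming. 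Either way, once the displayed formula is verified, the conclusions $\omega(Y)\in\sN(X)$ and $\Omega\subseteq\sN(X)$ follow as above.
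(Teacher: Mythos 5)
Your reduction steps are fine and agree with the paper: once the displayed local formula is known, it is a polynomial of degree one in the components of $q$ with holomorphic coefficients, so Proposition~\ref{prop:LocalDescription} gives near-holomorphy on $\Vp$; density of $\Vp$ together with smoothness of $\omega(Y)$ (so that $\iCR^{2}\omega(Y)$ vanishes identically once it vanishes on $\Vp$) gives it on $X$; and $\Omega\subseteq\sN(X)$ follows since $\sN(X)$ is an algebra. The genuine gap is that neither of your two routes actually establishes the formula, which is the whole content of the proposition, and both contain real errors rather than just unfinished bookkeeping.

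The route you say you would try first cannot work. Since $\bar z\in\mf n^-$ and $\exp(\mf n^-)\subseteq P$, the element $\exp(\bar z)$ \emph{fixes} the base point $o=eP$; it does not send $o$ to $z$ (the point $z$ is $\exp(z)P$ with $z\in\mf n^+$). More importantly, $\omega$ is only $U$-equivariant: $g\mapsto\kappa(Y,\Ad_gZ_0)$ is not right-$P$-invariant, because the stabilizer of $Z_0$ in $G$ is $L$ and not $P$ (indeed $\Ad_{\exp(w)}Z_0=Z_0+w$ for $w\in\mf n^-$), so one may not evaluate $\omega(Y)$ at a point of $X$ through a representative in $G\setminus U$ --- this is precisely why Lemma~\ref{lem:ChartLift} is needed. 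Finally, $\Ad_{\exp(-\bar z)}Y$ is a terminating (nilpotent) expansion, hence polynomial in $\bar z$, and pairing it with $Z_0$ can never produce the rational quasi-inverse $q(z)=\bar z^{-z}$; there is no ``resumming'' of a finite sum. Your first route is closer to a viable argument, but as written it drops the action of the middle factor $\Ad_h$, $h=\B{z}{-\bar z}^{\half}$, on the term $\bar z$ coming from $\Ad_{\exp(\bar z)}Z_0=Z_0+\bar z$: you only record $\Ad_hZ_0=Z_0$ and then apply $\Ad_{\exp(z)}$ to $Z_0+\bar z$. That middle action is exactly where the quasi-inverse enters, via $h^-\bar z=\B{\bar z}{-z}^{-\half}\bar z=\bar z^{-z}$ (note the square root: the relation $\B{\bar z}{-z}^{-1}\bar z=\bar z^{-z}$ you invoke is not correct, compare \eqref{eq:quasiinverse} and the identity $\B{z}{-\bar z}^{-\half}z=z^{-\bar z}$ derived in the paper from $u_z=u_{-z}^{-1}$). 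Omitting it yields a polynomial in $(z,\bar z)$, which cannot equal the stated expression. For comparison, the paper computes $\Ad_{u_z}Z_0$ in the vector-field realization via $(\Ad_{u^{-1}}\zeta)(x)=\partial u(x)^{-1}\zeta(u(x))$ and $u_z^{-1}(x)=\B{z}{-\bar z}^{\half}x^{-\bar z}-z$, and then evaluates the Killing form using its associativity together with $z^{-\bar z}=z-Q_z\bar z^{-z}$ and $D_{\bar z,z^{-\bar z}}=D_{\bar z^{-z},z}$; a corrected version of your group-factorization could serve as an alternative, but these Jordan-theoretic manipulations still have to be carried out and are absent from the proposal.
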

\begin{proof}
By definition, $\omega(Y)$ is a smooth function on $X$. Since $\Vp\subseteq X$ is open and dense, it follows that $\omega(Y)$ is nearly holomorphic on $X$ if and only if the restriction $\omega(Y)|_\Vp$ is nearly holomorphic on $\Vp$. It therefore suffices to proof the local formula for $\omega(Y)$. Due to Lemma~\ref{lem:ChartLift}, the restriction of $\omega(Y)$ to $\Vp$ is given by
\[
	\omega(Y)\at{\Vp}(z) = \kappa(Y,\Ad_{u_z}Z_0)
\]
with $u_z:=\exp(z)\cdot\B{z}{-\bar z}^\half\cdot\exp(\bar z)$. Realizing elements of $\mf g$ as holomorphic vector fields on $\Vp$, the adjoint action reads
\[
	(\Ad_{u^{-1}}\zeta)(x) = \partial u(x)^{-1}\cdot\zeta(u(x))
\]
for $u\in U$, $\zeta\in\mf g$ and $x\in\Vp$. Since $u_z^{-1}(x) = u_{-z}(x) = \B{z}{-\overline z}^\half x^{-\overline z} - z$, this yields
\begin{align*}
	(\Ad_{u_z}Z_0)(x)
		&= \big(\B{z}{-\overline z}^\half
							 \B{x}{-\overline z}^{-1}\big)^{-1}\cdot\big(u_z^{-1}(z)\big)\\
		&= \B{x}{-\bar z} x^{-\bar z} - \B{x}{-\bar z}\B{z}{-\bar z}^{-\half} z \\
		&= x+Q_x\bar z - \B{x}{-\bar z}(z^{-\bar z}) \\
		&= -z^{-\bar z} + x + \JTP{z^{-\bar z}}{\bar z}{x} + Q_x(\bar z - Q_{\bar z} z^{-\bar z})\\
		&= -z^{-\bar z} + (\Id_\Vp + D_{z^{-\bar z},\bar z})(x) + Q_x{\bar z}^{-z}\;.
\end{align*}
Here, the identity $\B{z}{-\bar z}^{-\half}z = z^{-\bar z}$ follows from $\exp(-\bar z)u_z = \B{z}{-\bar z}^{-\half}\exp(z)$ which is a consequence of $u_z = u_{-z}^{-1}$.
We thus obtain that 
\[
	\Ad_{u_z}Z_0 = Z_0 + (-z^{-\bar z},\,D_{z^{-\bar z},\bar z},\,{\bar z}^{-z})\in i\mf u.
\]
For the evaluation of the Killing form, we use the identity 
\begin{align*}
	(-z^{-\bar z},D_{z^{-\bar z},\bar z},{\bar z}^{-z})
		= [(z^{-\bar z},0,0),(0,\Id_\Vp,-\tfrac{1}{2}\bar z)]
		+ [(0,0,{\bar z}^{-z}),(\tfrac{1}{2}z,\Id_\Vp,0)]
\end{align*}
and the associativity of the Killing form. We conclude that
\begin{align*}
	\kappa(Y,\Ad_{u_z}Z_0) &= 2\Tr T -\kappa(z^{-\bar z},w) +\kappa(v,\bar z^{- z})\\
	 &\quad-\tfrac{1}{2}\kappa(Tz^{-\bar z},\bar z) -\tfrac{1}{2}\kappa(Tz,\bar z^{-z}).
\end{align*}
Due to the identity $z^{-\bar z} = z - Q_z\bar z^{-z}$ (see the symmetry formula in \cite[Appendix]{Lo77}), the term $\kappa(z^{-\bar z},w)$ can be rewritten as
\[
	\kappa(z^{-\bar z},w) = \kappa(z,w) - \kappa(Q_z\bar z^{-z},w)
	= \kappa(z,w) - \kappa(Q_zw,\bar z^{-z})\;.
\]
To show that the last two terms of $\kappa(Y,\Ad_{u_z}Z_0)$ coincide, it suffices to assume $T = D_{x,y}$ since $\mf l = [\mf n^+,\mf n^-]$. Using the relation $D_{\bar z,z^{-\bar z}} = D_{\bar z^{-z},z}$ which follows from $z^{-\bar z} = z - Q_z\bar z^{-z}$ and \JP{32}, and also using the associativity of the Killing form (recall that $D_{x,y}z = -[[x,y],z]$), we obtain
\begin{align*}
	\kappa(D_{x,y}z^{-\bar z},\bar z)
		= \kappa(x,D_{\bar z,z^{-\bar z}}y)
		= \kappa(x,D_{\bar z^{-z},z}y)
		= \kappa(D_{x,y}z,\bar z^{-z})\;.
\end{align*}
Putting all terms together this yields the stated formula.
We thus have proved that $\omega(Y)\at{\Vp}$ is indeed nearly holomorphic (of degree $1$) on $\Vp$. 
\end{proof}

In the next section, we use a simple representation theoretic argument to show that $\Omega$ actually coincides with the algebra of nearly holomorphic functions. Then, Proposition~\ref{prop:OmegaNearlyHolomorphic} yields the following description of nearly holomorphic functions, which was first obtained by Shimura for the classical Hermitian symmetric spaces of compact type by a case by case analysis, see Theorem~2.3 in \cite{S87}.

\begin{corollary}\label{cor:nearlyholomorphicgenerators}
	Let $(c_1,\ldots,c_n)$ be a basis of $\Vp$, and $(\tilde c_1,\ldots,\tilde c_n)$ be the 
	corresponding dual basis of $\Vm$ with respect to the Killing form $\kappa$. Set 
	\[
		q_\ell(z):=\kappa(c_\ell, q(z)),\quad
		d_{\ell k}(z):=\kappa(D_{z,\,q(z)}c_\ell,\tilde c_k)\,.
	\]
	Then, the space of nearly holomorphic functions satisfies
	\[
		\sN(X)
			= \CC\big[\Set{q_\ell,\ \overline{q_k},\ d_{\ell k}}{\ell,k=1,\ldots,n}\big]\,.
	\]
	In other words, the local picture of each nearly holomorphic function is a polynomial in the 
	terms $q_\ell$, $\overline{q_k}$, $d_{\ell k}$, and conversely, each polynomial in these terms 
	represents a nearly holomorphic function.
\end{corollary}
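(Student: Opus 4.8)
The plan is to deduce the statement from Proposition~\ref{prop:OmegaNearlyHolomorphic} together with the identification $\Omega=\sN(X)$ that will be supplied by the representation-theoretic discussion in Section~\ref{sec:RepTheoApp}. (For the latter: both $\Omega$ and $\sN(X)$ are $U$-stable, contain the constants, and are $U$-finite, since $\sN(X)=\bigcup_m\sN^m(X)$ with each $\sN^m(X)$ finite-dimensional by Proposition~\ref{prop:FiniteDim}; as $\Omega\subseteq\sN(X)$ by Proposition~\ref{prop:OmegaNearlyHolomorphic}, while $\Omega$ is already dense in $C(X)$ by Proposition~\ref{prop:DensityOfOmega} and $\sN(X)$ is the algebraic sum of all $U$-isotypic components of $L^2(X)$, the two modules must coincide.) Granting $\Omega=\sN(X)$, it suffices to prove that, after restriction to the open dense subset $\mf n^+\subseteq X$, the unital algebra generated by $\omega(\mf g)$ coincides with $\CC[\{q_\ell,\overline{q_k},d_{\ell k}\}]$; the identity theorem (Theorem~\ref{thm:IdTheorem}) then carries this identity back to $X$, and the inclusion stating that every polynomial in the listed functions is nearly holomorphic follows from $\Omega\subseteq\sN(X)$.

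Since $\omega$ is linear in its argument, $\Omega$ is the unital algebra generated by the subspace $\omega(\mf n^+)+\omega(\mf l)+\omega(\mf n^-)$, and I would compute the restriction of each summand to $\mf n^+$ from $\omega((v,T,w))\at{\mf n^+}(z)=2\,\Tr_+T-\kappa(z,w)+\kappa(v-Tz+Q_zw,q(z))$. For $v\in\mf n^+$ this gives $z\mapsto\kappa(v,q(z))$, of linear span $\mathrm{span}\{q_1,\dots,q_n\}$. For $w\in\mf n^-$, taking $w=-\overline{c_\ell}$ and using the conjugation identity $\overline{\omega(Y)}=\omega(-\vartheta(Y))$ from the proof of Proposition~\ref{prop:DensityOfOmega} with $\vartheta((c_\ell,0,0))=(0,0,\overline{c_\ell})$, one gets $\omega((0,0,-\overline{c_\ell}))\at{\mf n^+}=\overline{q_\ell}$, so this summand contributes $\mathrm{span}\{\overline{q_1},\dots,\overline{q_n}\}$. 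The decisive case is $T\in\mf l$, which yields $z\mapsto 2\,\Tr_+T-\kappa(Tz,q(z))$; here I would rewrite $\kappa(Tz,q(z))=\kappa(T,[z,q(z)])$ via the associativity of the Killing form, observe $[z,q(z)]\in\mf l$, and establish the span-matching identity $\mathrm{span}\{z\mapsto\kappa(Tz,q(z)):T\in\mf l\}=\mathrm{span}\{d_{\ell k}\}$. This rests on three structural facts about the irreducible compact Hermitian symmetric case: $\kappa$ is non-degenerate on $\mf l$ (the grading $\mf g=\mf n^+\oplus\mf l\oplus\mf n^-$ is $\kappa$-orthogonal), $\mf l$ acts faithfully on $\mf n^+$ (so the matrix-coefficient functionals $S\mapsto\kappa(Sc_\ell,\tilde c_k)$ span $\mf l^*$), and $[z,q(z)]$ acts on $\mf n^+$ as $\pm D_{z,q(z)}$, so that $d_{\ell k}(z)=\pm\kappa([z,q(z)]^+c_\ell,\tilde c_k)$ lies in the span of the functions $z\mapsto\kappa(T,[z,q(z)])$, and conversely. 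Because $\Omega$ contains the constants, the terms $2\,\Tr_+T$ cause no difficulty, and the $\mf l$-summand together with $1$ spans $\CC\cdot1+\mathrm{span}\{d_{\ell k}\}$.

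Adding up the three contributions, $\mathrm{span}\big(\{1\}\cup\omega(\mf g)\at{\mf n^+}\big)=\mathrm{span}\{1,q_\ell,\overline{q_k},d_{\ell k}\}$, hence the two unital algebras agree on $\mf n^+$; together with $\Omega=\sN(X)$ and Theorem~\ref{thm:IdTheorem} this gives the Corollary. I expect the main obstacle to be the span-matching identity for the $\mf l$-summand: that is the place where the Jordan-theoretic content of Proposition~\ref{prop:OmegaNearlyHolomorphic} is really used, and where one must keep track of the sign in $[z,q(z)]^+=\pm D_{z,q(z)}$; the only nontrivial global ingredient, the equality $\Omega=\sN(X)$, is handled separately in Section~\ref{sec:RepTheoApp}.
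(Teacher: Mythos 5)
Your proposal is correct and follows essentially the same route as the paper: it combines the identification $\Omega=\sN(X)$ (Remark~\ref{rmk:OmegaEquality}, i.e.\ the argument of Proposition~\ref{prop:NearlyHolomorphicAndUFinite} for the trivial bundle, using Propositions~\ref{prop:DensityOfOmega}, \ref{prop:OmegaNearlyHolomorphic} and \ref{prop:FiniteDim}) with the explicit restriction formula of Proposition~\ref{prop:OmegaNearlyHolomorphic} and then matches generators. The only (harmless) deviation is in the $\mf l$-part, where you argue via non-degeneracy of $\kappa$ on $\mf l$ and faithfulness of $\mf l$ on $\Vp$, whereas the paper uses $\mf l=[\Vp,\Vm]$ and the identity $\kappa(D_{v,w}z,q(z))=\kappa(D_{z,q(z)}v,w)$; both yield the same span identification with the $d_{\ell k}$.
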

\begin{proof}
We first note that constant functions are nearly holomorphic. Then, Remark~\ref{rmk:OmegaEquality}, Proposition~\ref{prop:OmegaNearlyHolomorphic} and its proof show that the space of nearly holomorphic functions is algebraically generated by the following elements
\begin{align*}
	\omega((v,0,0))\at{\Vp}(z) &= \kappa(v,q(z)),\\
	\omega((0,0,w))\at{\Vp}(z) &= -\kappa(\overline{q(z)},w),\\
	\omega((0,T,0))\at{\Vp}(z) &= 2\,\Tr_+ T  -\kappa(Tz,q(z))\,,
\end{align*}
with $v\in\Vp$, $w\in\Vm$, $T\in\mf l$. Since $\kappa(x,\bar y) = \overline{\kappa(y,\bar x)}$, the first two sets of generators correspond to the coefficient functions $q_\ell(z)$ and $\overline{q_k(z)}$. For the last type of generator, it suffices to consider $T=D_{v,w}$ for $v\in\Vp$, $w\in\Vm$, since $\mf l$ is (linearly) generated by such elements. The constant term $2\,\Tr_+ T$ can be ignored, since (by definition) constants take part of the generators of a unital $\CC$-algebra. Finally, the associativity of the Killing form implies 
\[
	\kappa(D_{v,w} z,q(z)) = \kappa(D_{v,q(z)}z,w) = \kappa(D_{z,q(z)}v,w),
\]
so the generators corresponding to elements in $\mf l$ can be replaced by the set of all $d_{\ell k}(z)$, $k,\ell = 1,\ldots, n$.
\end{proof}


\section{Application to harmonic analysis}\label{sec:RepTheoApp}
In this section, we apply the results of the last sections to representation theoretic questions on the Hilbert space $L^2(X,\sE)$ of square integrable sections in a $G$-homogeneous irreducible Hermitian vector bundle $\sE=G\times_P E$. 

\subsection{Preliminaries}
As before, let $X=G/P=U/K$ be an irreducible Hermitian symmetric space. Let $\mf h\subseteq\mf g$ be a Cartan subalgebra of $\mf g$ and $\Phi^+(\mf g,\mf h)\subseteq\Phi(\mf g,\mf h)$ be a system of positivity in the corresponding root system , such that $P$ is the standard parabolic subgroup corresponding to the simple root $\alpha_1\in\Phi^+(\mf g,\mf h)$. More precisely, if $\Delta = \{\alpha_1,\ldots,\alpha_\ell\}$ is the set of simple roots, then any $\beta\in\Phi(\mf g,\mf h)$ has a unique decomposition $\beta = \sum m_i(\beta)\alpha_i$, and
\[
	\mf p = \bigoplus_{\beta\in\Phi(\mf g,\mf h):\ m_1(\beta)\leq 0} \mf g_\beta\,,
\]
where $\mf g_\beta$ denotes the root space of $\beta$. In this context, the assumption that $X$ is Hermitian symmetric is equivalent to the condition that $|m_1(\beta)|\leq 1$ for all $\beta$, which implies that the components of the decomposition $\mf g = \mf n^+\oplus\mf l\oplus\mf n^-$ (as above) are given by 
\[
	\mf l = \mf h\oplus\bigoplus_{\beta\in\Phi(\mf g,\mf h):\ m_1(\beta)=0}\mf g_\beta\,,\quad
	\mf n^\pm = \bigoplus_{\beta\in\Phi(\mf g,\mf h):\ m_1(\beta)=\pm1}\mf g_\beta\,.
\]
We note that $\mf h$ is also a Cartan subalgebra of $\mf l$ with root system $\Phi(\mf l,\mf h)\subseteq\Phi(\mf g,\mf h)$, consisting of the \emph{compact} roots. We set
\[
	\Phi:=\Phi(\mf g,\mf h),\quad\Phi_c:=\Phi(\mf l,\mf h),\quad\Phi_{nc}:=\Phi\setminus\Phi_c,
\]
and write $\Phi^+$, $\Phi^+_c$, $\Phi^+_{nc}$ for the corresponding subsets of positive roots. The roots in $\Phi_{nc}$ are called \emph{non-compact}. The positive (resp. negative) non-compact roots are those with root spaces lying in $\mf n^+$ (resp. $\mf n^-$). For each root $\alpha\in\Phi^+$, we fix a corresponding $\mf{sl}_2$-triple $(X_\alpha,H_\alpha,Y_\alpha)$,
\[
	[H_\alpha,X_\alpha] = 2\,X_\alpha,\quad
	[H_\alpha,Y_\alpha] = -2\,Y_\alpha,\quad
	[X_\alpha,Y_\alpha] = H_\alpha\,.
\]
For simple roots, we set $(X_i,H_i,Y_i):=(X_{\alpha_i},H_{\alpha_i},Y_{\alpha_i})$. The Cartan subalgebra $\mf h\subseteq\mf g$ splits into $\mf h = \mf z(\mf l)\oplus\mf h'$, where $\mf z(\mf l)$ is the center of $\mf l$, and $\mf h':=\CC H_2\oplus\cdots\oplus\CC H_\ell$ is a Cartan subalgebra of the semisimple part $\mf l_\ss$ of $\mf l$, i.e., $\mf l = \mf z(\mf l)\oplus\mf l_\ss$. The center $\mf z(\mf l)$ is spanned by the distinguished element $Z_0\in\mf l$ and satisfies $\mf z(\mf l) = \set{H\in\mf h}{\alpha_i(H) = 0\ \forall i>1}$. 

The set of dominant integral weights with respect to $\Phi^+$ is denoted by
\[
	\Lambda := \Set{\lambda\in\mf h^*}{\lambda(H_i)\in\NN\text{ for all $i=1,\ldots,\ell$}}\,.
\]
Since $G$ is simply-connected, $\Lambda$ parametrizes the set of all isomorphism classes of irreducible finite dimensional representations of $G$. A representative of the isomorphism class corresponding to $\lambda\in\Lambda$ is denoted by $V_\lambda$. Let $\Lambda_c\subseteq\mf h^*$ denote the set of highest weights corresponding to finite dimensional irreducible representations of $L$. This is a subset of the set of dominant analytically integral weights with respect to $\Phi^+_c$. The isomorphism class corresponding to $\lambda\in\Lambda_c$ is denoted by $E_\lambda$. We note that $\Lambda\subseteq\Lambda_c$, and for $\lambda\in\Lambda$ we have
\[
	V_\lambda^{\mf n^+} :=\Set{v\in V_\lambda}{Y.v = 0\text{ for all }Y\in\mf n^+}
	\cong E_\lambda
\]
as $L$-modules, see \cite[V.\S7]{Kna02}.

Recall that continuous (resp. smooth or holomorphic) sections in $\sE=G\times_P E$ can be described as continuous (resp. smooth or holomorphic) functions $f:G\to E$ satisfying the equivariance condition
\begin{align}\label{eq:equivariance}
	f(gp) = \rho(p)^{-1}f(x)\quad\text{for all}\quad g\in G,\ p\in P.
\end{align}
Recall that Hermitian structures on $\sE$ correspond to $K$-invariant Hermitian inner products on $E$. Let $|\cdot|$ denote the corresponding $K$-invariant norm on $E$. In this description, the $L^2$-norm is given by
\[
	\|f\|^2 = \int_U |f(u)|^2du\,,
\]
where $du$ is the suitably normalized invariant measure on $U$. Then, $L^2(X,\sE)$ is the completion of $C(X,\sE)$ with respect to this norm. The standard action of $G$ on continuous sections reads $(g.f)(g') = f(g^{-1}g')$. This actions extends to a continuous representation of $G$ on $L^2(X,\sE)$, denoted by $\pi$, which restricts to a unitary representation of $U\subseteq G$. 

If $f:G\to E$ represents a continuous section in $\sE$, then its restriction to the open and dense subset $\mf n^+\subseteq X$ is given by
\begin{align}\label{eq:LocalVsInducedPicture}
	f_{\mf n^+}(z)=f(\exp(z))\quad\text{for}\quad z\in\mf n^+,
\end{align}
which is a continuous map $f_{\mf n^+}:\mf n^+\to E$. The $L^2$-norm translates to an integral on $\mf n^+$, whose explicit form is not needed in the sequel. However, let $L^2(\mf n^+,E)$ denote the corresponding $L^2$-space, which is isomorphic to $L^2(X,\sE)$ (with appropriate $G$-action), and which is called the \emph{local picture} of $L^2(X,\sE)$.

\begin{lemma}
	In the local picture, the $G$-action on $f\in L^2(\Vp,E)$ reads
	\begin{align*}
		\pi(\exp(v))f(z) &= f(z-v)\,,\\
		\pi(h)f(z) &= \rho(h)\,f(h^{-1}z)\,,\\
		\pi(\exp(w))f(z) &= \rho(\B{z}{-w})\,f(z^{-w})
	\end{align*}
	with $v\in\Vp$, $h\in L$, $w\in\Vm$.
\end{lemma}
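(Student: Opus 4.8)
The plan is to work in the local picture. A square-integrable section is represented by an $E$-valued function $f$ on $G$ subject to the equivariance relation \eqref{eq:equivariance}, and its restriction to the chart $\Vp\subseteq X$ is $f_\Vp(z)=f(\exp z)$ as in \eqref{eq:LocalVsInducedPicture}. Since the $G$-action on sections is $(g.f)(g')=f(g^{-1}g')$, one has $(\pi(g)f)_\Vp(z)=f(g^{-1}\exp z)$, and the whole computation reduces to factoring $g^{-1}\exp(z)$ over the \emph{big cell} $\exp(\Vp)\cdot P$. Indeed, once $g^{-1}\exp(z)=\exp(z')\,\ell\,\exp(w')$ with $z'\in\Vp$, $\ell\in L$, $w'\in\Vm$, the equivariance \eqref{eq:equivariance} together with the fact that $\exp(\Vm)$ is the unipotent radical of $P$ and therefore acts trivially in the irreducible representation $\rho$ gives
\[
	(\pi(g)f)_\Vp(z)=\rho(\ell)^{-1}\,f_\Vp(z')\,.
\]
So it remains to identify $z'$ and $\ell$ for $g\in\exp(\Vp)$, $g\in L$, and $g\in\exp(\Vm)$. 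I note that for fixed $g$ the big-cell condition may fail on a proper analytic subset of $\Vp$; this is harmless, since $\pi(g)f$ is only an $L^2$-class (and on continuous sections the identities extend by continuity from the dense open set on which they hold).

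For $g=\exp(v)$ with $v\in\Vp$, commutativity of $\Vp$ gives $\exp(-v)\exp(z)=\exp(z-v)$, hence $z'=z-v$, $\ell=e$. For $g=h\in L$, conjugation gives $h^{-1}\exp(z)=\exp(\Ad_{h^{-1}}z)\,h^{-1}$, hence $z'=h^{-1}z$, $\ell=h^{-1}$, and $\rho(h^{-1})^{-1}=\rho(h)$ yields the second formula. These two cases are immediate and need no Jordan theory.

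The substance of the lemma is the case $g=\exp(w)$ with $w\in\Vm$, where I would invoke the Jordan-theoretic realization of $G$ from \cite[\S\,8]{Lo77}: whenever $(z,-w)$ is quasi-invertible, i.e.\ $\Delta(z,-w)\neq0$, one has
\[
	\exp(-w)\exp(z)=\exp\bigl(z^{-w}\bigr)\cdot h_{z,w}\cdot\exp\bigl((-w)^{z}\bigr)\,,
\]
where $h_{z,w}\in L$ acts on $\Vp$ by $\B{z}{-w}^{-1}$; equivalently $h_{z,w}^{-1}=\B{z}{-w}$ under the abuse of notation introduced together with the Bergman operator. Then $z'=z^{-w}$, $\ell=h_{z,w}$, so $\rho(\ell)^{-1}=\rho\bigl(h_{z,w}^{-1}\bigr)=\rho(\B{z}{-w})$, which is the third formula. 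The main obstacle is precisely the identification of the Levi component of $\exp(-w)\exp(z)$ with $\B{z}{-w}^{-1}$; I would extract it from the group laws in \cite[\S\S\,8--9]{Lo77}, cross-check it by reducing to a sum of rank-one Jordan pairs and a direct $\SL_2$-computation (which also fixes the sign), and note its consistency with the derivative formula $\partial_v(x^{-w})=\B{x}{-w}^{-1}v$ from \cite[\S\,7.8]{Lo77}. Once this factorization is established, the lemma follows.
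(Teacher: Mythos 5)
Your argument is correct and is essentially the paper's own proof: both reduce everything to the relation \eqref{eq:LocalVsInducedPicture}, the equivariance \eqref{eq:equivariance}, and the factorizations $h^{-1}\exp(z)=\exp(h^{-1}z)h^{-1}$ and $\exp(-w)\exp(z)=\exp(z^{-w})\,\B{z}{-w}^{-1}\exp((-w)^z)$ from \cite[\S\,8.11]{Lo77}, which is exactly the Levi-component identification you single out as the main point. Your additional remarks (triviality of $\rho$ on the unipotent radical, and the quasi-invertibility holding off a null set) only make explicit what the paper leaves implicit, so no further verification such as the proposed $\SL_2$ cross-check is needed.
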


\begin{proof}
This easily follows from the relation \eqref{eq:LocalVsInducedPicture}, the equivariance property \eqref{eq:equivariance}, and the identities 
\begin{align*}
	h^{-1}\exp(z) &= \exp(h^{-1}z)h^{-1},\\
	\exp(-w)\exp(z) &= \exp(z^{-w})\B{z}{-w}^{-1}\exp((-w)^z)
\end{align*}
for $h\in L$, $z\in\Vp$ and $w\in\Vm$, see \cite[\S\,8.11]{Lo77}.
\end{proof}
The representation of the Lie algebra $\mf g$ induced by the $G$-action on smooth sections is readily obtained by differentiation,
\begin{align*}
	d\pi(u,0,0)f(z) &= -d_uf(z)\,,\\
	d\pi(0,T,0)f(z) &= -d_{Tz}f(z) + d\rho(T)\,f(z)\,,\\
	d\pi(0,0,v)f(z) &= -d_{Q_zv}f(z) + d\rho(D_{z,v})\,f(z)\,.
\end{align*}
It is important to note that $d_u$ denotes the \emph{real} directional derivative along $u$. In order to investigate complex structures, it is convenient to consider the embedding $\Vp\hookrightarrow\Vp\times\Vm$ given by $z\mapsto(z,\bar z)$. Then, the complexified tangent space at $(z,\bar z)$ is identified with $\Vp\times\Vm$ where $\Vp$ is the holomorphic and $\Vm$ is the anti-holomorphic component. In this way, $d_u$ (or more appropriately $d_{(u,\bar u)}$) becomes $\del_u + \delbar_{\bar u}$. In particular, the element $Y=(v,T,\bar v)\in\mf u$ with $v\in\mf n^+$, $T\in\mf k$, acts by
\[
	d\pi(Y)f = -\partial_{v+Tz+Q_z\bar v}f 
						 -\bar\partial_{\bar v+ T\bar z + Q_{\bar z}v}f
						 +d\rho(T + D_{z,\bar v})\,f(z)\;.
\]
For the discussion of weights corresponding to this $\mf u$-action, we are interested in its complexification. Since $\mf u_\CC =\mf g$, it is likely to get confused about the complexified action of $\mf u_\CC$ and the action of $\mf g$ discussed above. We denote the complexified action by $d\pi_\CC$. The full complexified $\mf g$-action is not needed in the sequel.

\begin{lemma}\label{lem:localLiealgebraAction}
	In the local picture, the complexified action of $\mf u^\CC=\mf g$ is given by
	\begin{align*}
		d\pi_\CC(v,0,0)f(z) &= \big(-\partial_v-\bar\partial_{Q_{\bar z} v}\big)f(z)\,,\\	
		d\pi_\CC(0,T,0)f(z) &= \big(-\partial_{Tz}-\bar\partial_{T\bar z} + d\rho(T)\big)f(z)\,,\\
		d\pi_\CC(0,0,w)f(z) &= \big(-\partial_{Q_zw}-\bar\partial_w + d\rho(D_{z,w})\big)f(z)
	\end{align*}
	with $v\in\Vp$, $T\in\mf l$, $w\in\Vm$.
\end{lemma}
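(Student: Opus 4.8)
The plan is to exploit the uniqueness of the complex-linear extension. Since $\mf u_\CC=\mf g$, the complexified action $d\pi_\CC$ is, by definition, the unique $\CC$-linear map from $\mf g$ into the operators on $C^\infty(\Vp,E)$ whose restriction to $\mf u\subseteq\mf g$ equals the (merely $\RR$-linear) $\mf u$-action $d\pi$. Recall from $\vartheta(u,T,v)=(\bar v,-T^*,\bar u)$ that $\mf u=\Fix(\vartheta)=\set{(v,T,\bar v)}{v\in\Vp,\ T\in\mf k}$, on which $d\pi$ is given by
\[
	d\pi(v,T,\bar v)f=-\partial_{v+Tz+Q_z\bar v}f-\bar\partial_{\bar v+T\bar z+Q_{\bar z}v}f+d\rho(T+D_{z,\bar v})\,f
\]
as established just above. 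Hence it suffices to show that the assembled right-hand side of the claimed formula,
\[
	\Phi(v,T,w)f:=\big(-\partial_{v+Tz+Q_zw}-\bar\partial_{w+T\bar z+Q_{\bar z}v}+d\rho(T+D_{z,w})\big)f,\quad(v,T,w)\in\mf g,
\]
defines a $\CC$-linear map $\Phi$ on $\mf g$ which restricts to $d\pi$ on $\mf u$.

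For the first point, $v+Tz+Q_zw\in\Vp$ depends $\CC$-linearly on $(v,T,w)$ because the Lie bracket and the Jordan triple product (hence $Q$ and $D$) are $\CC$-linear in each argument, and likewise $w+T\bar z+Q_{\bar z}v\in\Vm$ and $T+D_{z,w}\in\mf l$ do; moreover $d\rho$ is $\CC$-linear since $\rho$ is holomorphic, and for fixed $z$ the maps $\Vp\ni u\mapsto\partial_u$ and $\Vm\ni w\mapsto\bar\partial_w$ are $\CC$-linear. Therefore $\Phi$ is $\CC$-linear. For the second point, evaluate $\Phi$ at $(v,T,\bar v)\in\mf u$: the holomorphic directions add up to $v+Tz+Q_z\bar v$, the antiholomorphic ones to $\bar v+T\bar z+Q_{\bar z}v$, and the isotropy term to $T+D_{z,\bar v}$, so that $\Phi(v,T,\bar v)=d\pi(v,T,\bar v)$ by the displayed $\mf u$-formula. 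By uniqueness of the $\CC$-linear extension, $d\pi_\CC=\Phi$ on all of $\mf g$, and setting two of $v,T,w$ equal to zero yields the three asserted formulas.

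The computation is short once organised this way, and the main point requiring care is precisely the bookkeeping of complex versus real linearity: the \emph{real} directional derivative $d_u=\partial_u+\bar\partial_{\bar u}$ is only $\RR$-linear in $u$, since $u\mapsto\bar u$ is conjugate-linear, whereas the individual operators $\partial_u$ (in $u\in\Vp$) and $\bar\partial_w$ (in $w\in\Vm$) are $\CC$-linear, and it is the latter that appear in $\Phi$. A more computational alternative, not using the $\CC$-linearity observation, would be to write an arbitrary $Y\in\mf g$ as $Y=Y_1+iY_2$ with $Y_1=\tfrac12(Y+\vartheta Y)$ and $Y_2=\tfrac1{2i}(Y-\vartheta Y)$ in $\mf u$, apply the $\mf u$-formula to $d\pi(Y_1)+i\,d\pi(Y_2)$, and check that the factors of $i$ produced by the conjugate-linearity of $\vartheta$ on $\Vp$ and of $T\mapsto T^*$ on $\mf l$ cancel against the outer $i$ so as to leave $\Phi(Y)$; I expect the $\CC$-linearity route to be the cleaner one.
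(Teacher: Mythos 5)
Your argument is correct and is precisely the argument the paper leaves implicit ("the proof is straightforward, we therefore omit the details"): one complexifies the displayed $\mf u$-formula, using that $\partial_u$ ($u\in\Vp$) and $\bar\partial_w$ ($w\in\Vm$) are $\CC$-linear in their directions while the real derivative $d_u=\partial_u+\bar\partial_{\bar u}$ is only $\RR$-linear, and that $d\rho$ is $\CC$-linear on $\mf l$. Your uniqueness-of-the-$\CC$-linear-extension packaging is a clean way to organize exactly this bookkeeping, so no further comment is needed.
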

Using the identification of $\Vp$ with the diagonal in $\Vp\times\Vm$, the proof of this proposition is straightforward, we therefore omit the details.

\subsection{Main result}
Standard representation theory for compact Lie groups yields that $L^2(X,\sE)$ decomposes into the Hilbert sum
\begin{align}\label{eq:L2decomposition}
	L^2(X,\sE)= \widehat{\bigoplus_{\lambda\in\Lambda}}\; W_\lambda^\sE\,
\end{align}
of $U$-isotypic components $W_\lambda^\sE\cong m_\lambda^\sE\cdot V_\lambda$, where $m_\lambda^\sE\geq 0$ denotes the multiplicity of $V_\lambda$ in $L^2(X,\sE)$. According to Frobenius reciprocity, $m_\lambda^\sE$ is finite, so $W_\lambda^\sE$ are finite dimensional. Let $W^\sE$ denote the algebraic sum of the isotypic components,
\[
	W^\sE:=\bigoplus_{\lambda\in\Lambda} W_\lambda^\sE\;,
\]
which forms a $U$-invariant dense subspace of $L^2(X,\sE)$, and can also be characterized as the set of $U$-finite vectors in $L^2(X,\sE)$.

\begin{proposition}\label{prop:NearlyHolomorphicAndUFinite}
	If $\sN(X,\sE)\neq\{0\}$, then $\sN(X,\sE)$ coincides with the space $W^\sE$ of $U$-finite 
	vectors in $L^2(X,\sE)$.
\end{proposition}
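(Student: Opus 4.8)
The plan is to prove the two inclusions $\sN(X,\sE) \subseteq W^\sE$ and $W^\sE \subseteq \sN(X,\sE)$ separately, using the hypothesis $\sN(X,\sE)\neq\{0\}$ only for the second. For the first inclusion, observe that each $\sN^m(X,\sE)$ is $U$-invariant (noted already, via the invariance property in Proposition~\ref{prop:BasisPropsNearHol}) and, by Proposition~\ref{prop:FiniteDim}, finite dimensional since $X$ is compact. A finite-dimensional $U$-invariant subspace of $C^\infty(X,\sE)\subseteq L^2(X,\sE)$ consists entirely of $U$-finite vectors, so $\sN^m(X,\sE)\subseteq W^\sE$ for every $m$, whence $\sN(X,\sE)=\bigcup_m\sN^m(X,\sE)\subseteq W^\sE$. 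This direction is essentially immediate.

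For the reverse inclusion I would first reduce to showing that every isotypic component $W^\sE_\lambda$ is contained in $\sN(X,\sE)$; since $W^\sE$ is the algebraic (finite) sum of the $W^\sE_\lambda$ and $\sN(X,\sE)$ is a linear subspace, this suffices. Fix $\lambda$ with $m^\sE_\lambda>0$. By Theorem~\ref{thm:NearlyHolDens}, $\sN(X,\sE)$ is dense in $C(X,\sE)$, hence dense in $L^2(X,\sE)$; therefore the orthogonal projection of $\sN(X,\sE)$ onto the finite-dimensional subspace $W^\sE_\lambda$ is a nonzero (indeed dense, hence all of) $W^\sE_\lambda$. The key point is then that the $U$-equivariant projection $L^2(X,\sE)\to W^\sE_\lambda$ maps nearly holomorphic sections to nearly holomorphic sections. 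To see this, take $f\in\sN(X,\sE)$; its projection onto $W^\sE_\lambda$ can be written as $\int_U \overline{\chi_\lambda(u)}\,\pi(u)f\,du$ (up to a scalar, $\chi_\lambda$ the character of $V_\lambda$, $\dim V_\lambda$ normalization), which lies in the span of $U.f$. But $U.f\subseteq\sN^{\deg f}(X,\sE)$ by $U$-invariance of each $\sN^m$, and $\sN^{\deg f}(X,\sE)$ is a \emph{finite-dimensional} (hence closed) subspace of $L^2(X,\sE)$, so the average $\int_U \overline{\chi_\lambda(u)}\,\pi(u)f\,du$, being a limit of convex combinations of elements of this closed finite-dimensional space, again lies in $\sN^{\deg f}(X,\sE)\subseteq\sN(X,\sE)$. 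Thus the projection of $f$ lies in $\sN(X,\sE)\cap W^\sE_\lambda$, and ranging over all $f$ we get that this intersection is dense in $W^\sE_\lambda$; since $W^\sE_\lambda$ is finite dimensional, it equals $W^\sE_\lambda$. Hence $W^\sE_\lambda\subseteq\sN(X,\sE)$ for all $\lambda$, giving $W^\sE\subseteq\sN(X,\sE)$.

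The main obstacle is making rigorous the claim that the isotypic projection of a nearly holomorphic section is again nearly holomorphic — equivalently, that $\sN^m(X,\sE)$, being $U$-invariant and finite dimensional, is a \emph{closed} $U$-submodule of $L^2(X,\sE)$ on which the vector-valued integral defining the projection can be evaluated. Finite dimensionality (Proposition~\ref{prop:FiniteDim}) takes care of closedness, and the integral $\int_U \overline{\chi_\lambda(u)}\,\pi(u)f\,du$ converges in $L^2$ (indeed in $C^\infty$, since $f$ is smooth and $U$ compact) and stays inside the closed subspace $\sN^{\deg f}(X,\sE)$; so once this bookkeeping is in place, the argument closes. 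I would present the projection step as the core lemma and treat the density input from Theorem~\ref{thm:NearlyHolDens} and the finite dimensionality from Proposition~\ref{prop:FiniteDim} as the two pillars it rests on.
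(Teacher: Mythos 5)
Your proposal is correct and follows essentially the same route as the paper: the inclusion $\sN(X,\sE)\subseteq W^\sE$ from $U$-invariance plus finite dimensionality of each $\sN^m(X,\sE)$, and the reverse inclusion from the density given by Theorem~\ref{thm:NearlyHolDens} together with the finite dimensionality of the isotypic components $W^\sE_\lambda$ and the orthogonal projection onto them. The only difference is presentational: where the paper decomposes $\sN(X,\sE)$ into its own isotypic components $\sN^\sE_\lambda\subseteq W^\sE_\lambda$ and notes the projection is onto, you verify directly via the character-averaging integral $\dim V_\lambda\int_U\overline{\chi_\lambda(u)}\,\pi(u)f\,du$ that the projection stays inside the closed finite-dimensional subspace $\sN^{\deg f}(X,\sE)$ --- the same fact, made explicit.
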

\begin{proof}
Since $\sN^m(X,\sE)$ is $U$-invariant and finite dimensional (Propositions~\ref{prop:BasisPropsNearHol} and \ref{prop:FiniteDim}), it follows that $\sN(X,\sE)\subseteq W^\sE$. More precisely, if $\sN(X,\sE) = \bigoplus_{\lambda\in\Lambda}\sN_\lambda^\sE$ denotes the decomposition of $\sN(X,\sE)$ into $U$-isotypic components, then $\sN^\sE_\lambda\subseteq W^\sE_\lambda$ for all $\lambda$. Since $\sN(X,\sE)$ is assumed to be non-trivial, Theorem~\ref{thm:NearlyHolDens} implies that $\sN(X,\sE)$ is dense in $L^2(X,\sE)$. Since $W_\lambda^\sE$ is finite dimensional, this yields that for each $\lambda\in\Lambda$, the orthogonal projection of $\sN(X,\sE)$ to $W^\sE_\lambda$ is onto. Thus, $\sN^\sE_\lambda = W^\sE_\lambda$, which completes the proof.
\end{proof}

\begin{remark}\label{rmk:OmegaEquality}
	For the trivial line bundle, $\sE = X\times\CC$, the same argument as used for this proposition 
	also shows that the subspace $\Omega\subseteq L^2(X)$ defined in \eqref{eq:OmegaDef} coincides 
	with the space of $U$-finite vectors in $L^2(X)$. Therefore, $\sN(X) = \Omega = W^\sE$.
\end{remark}

Due to Proposition~\ref{prop:LocalDescription}, a nearly holomorphic section $f\in\sN^m(X,\sE)$ restricts on $\Vp\subseteq X$ to a map of the form 
\begin{align}\label{eq:localnearlyhol}
	f(z) = \sum_{|\b i|\leq m} f_\b i(z)\cdot q(z)^\b i\;,
\end{align}
with unique holomorphic coefficients $f_\b i\in\sO(\Vp,E)$, and $q:\mf n^+\to\mf n^-$ given by $q(z) = \bar z^{-z}$, see Lemma~\ref{lem:HermMetricAndKaehlerPot}. Moreover, according to the identity theorem this provides an embedding of $\sN(X,\sE)$ into $\sO(\Vp,E)[\Vm]$, the space of polynomials on $\Vm$ with coefficients in $\sO(\Vp,E)$,
\begin{align}\label{eq:nearlyholembedding}
	\iota_{\sN}: \sN(X,\sE)\hookrightarrow\sO(\Vp,E)[\Vm],\
	f\mapsto p_f(y):=\sum_{|\b i|\leq\deg f}f_\b i \,y^\b i\,.
\end{align}
By means of $\sO(\Vp,E)[\Vm]\subseteq\sO(\Vp\times\Vm,E)$, we may also write
\[
	p_f(x,y):=\sum_{|\b i|\leq\deg f}f_\b i(x) \,y^\b i\,.
\]
Let $\sP(\mf n^-,E)$ denote the space of $E$-valued complex polynomials on $\mf n^-$, which we may regard as a subspace of $\sO(\Vp,E)[\Vm]$. Consider the action of the Levi factor $L$ on $p\in \sO(\Vp\times\Vm,E)$ given by
\[
	(h.p)(x,y):=\rho(h)p(h^{-1}x,h^{-1}y)\quad\text{for}\quad h\in L,\ x\in\mf n^+,\ y\in\mf n^-\,,
\]
where $h^{-1}x = Ad_{h^{-1}}x$ and $h^{-1}y = Ad_{h^{-1}}y$ as in Section~\ref{sec:NearHolOnHermSym}. Then, $\sP(\mf n^-,E)$ and $\sO(\Vp,E)[\Vm]$ are $L$-invariant subspaces. We note that due to \cite[\S\,7.3]{Lo77}, 
\[
	q(kz) = \overline{kz}^{-kz} = k\,\bar z^{-z} = kq(z)
\]
for all $k\in K\subseteq L$. Therefore the embedding $\iota_\sN$ is $K$-equivariant.

\begin{theorem}\label{thm:highestweights}
	Assume that $\sN(X,\sE)\neq\{0\}$. For all $\lambda\in\Lambda$, the map
	\begin{align*}
		\varphi_\lambda:\Hom_U(V_\lambda,L^2(X,\sE))\to\Hom_K(V_\lambda^{\mf n^+},\Poly(\Vm,E)),\
		T\mapsto \iota_{\sN}\circ T|_{V_\lambda^{\mf n^+}}
	\end{align*}
	is well-defined and a vector space monomorphism. If, in addition, $\sO(X,\sE)\neq\{0\}$, then 
	$\varphi_\lambda$ is an isomorphism for all $\lambda\in\Lambda$, and all highest weights with 
	respect to $\Phi^+_c$ occurring in $\sP(\mf n^-,E)$ are also dominant integral for $\Phi^+$.
\end{theorem}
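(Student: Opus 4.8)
Here is the plan. The idea is to factor $\varphi_\lambda$ through the \emph{$\mf n^+$-invariant picture} and reduce everything to three facts established above: that $\sN(X,\sE)$ equals the space $W^\sE$ of $U$-finite vectors (Proposition~\ref{prop:NearlyHolomorphicAndUFinite}), the uniqueness of the local $q$-expansion (Proposition~\ref{prop:LocalDescription}), and the explicit formula for the functions $\omega(Y)$ (Proposition~\ref{prop:OmegaNearlyHolomorphic}). The key lemma will be: a nearly holomorphic section $f\in\sN(X,\sE)$ is annihilated by $d\pi_\CC(\mf n^+)$ precisely when every coefficient $f_\b i$ in its local expansion $f|_{\mf n^+}=\sum_\b i f_\b i\,q^\b i$ is constant, i.e.\ lies in $E$; only the ``only if'' direction is needed. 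The proof is short: by Proposition~\ref{prop:OmegaNearlyHolomorphic} each coordinate function $q_\ell=\kappa(c_\ell,q(\cdot))$ is the restriction to $\mf n^+$ of $\omega((c_\ell,0,0))$, and $\omega((c_\ell,0,0))$ is $\mf n^+$-invariant because $\omega$ is $U$-equivariant, so $d\pi_\CC(Z)\omega(Y)=\omega([Z,Y])$, and $[\mf n^+,\mf n^+]=0$. Since for $v\in\mf n^+$ the operator $d\pi_\CC(v,0,0)=-\partial_v-\bar\partial_{Q_{\bar z}v}$ (Lemma~\ref{lem:localLiealgebraAction}) is a scalar first-order operator acting as a derivation, it kills every monomial $q^\b i$, whence $d\pi_\CC(v,0,0)f=-\sum_\b i(\partial_v f_\b i)\,q^\b i$; if this vanishes, the uniqueness in Proposition~\ref{prop:LocalDescription} forces $\partial_v f_\b i=0$ for all $v$ and all $\b i$. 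Consequently $\iota_\sN$ restricts to a $K$-equivariant embedding $\iota_\sN:\sN(X,\sE)^{\mf n^+}\hookrightarrow\sP(\mf n^-,E)$.

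Granting this, I would write $\varphi_\lambda=(\iota_\sN\circ{-})\circ\mathrm{res}_\lambda$ with $\mathrm{res}_\lambda(T)=T|_{V_\lambda^{\mf n^+}}$, and first check well-definedness: if $\sN(X,\sE)\neq\{0\}$ then $T(V_\lambda)\subseteq W_\lambda^\sE=\sN_\lambda^\sE$ by Proposition~\ref{prop:NearlyHolomorphicAndUFinite}, and since $T$ intertwines the complexified $\mf g=\mf u_\CC$-actions it maps $V_\lambda^{\mf n^+}$ into $\sN(X,\sE)^{\mf n^+}$, which $\iota_\sN$ sends into $\sP(\mf n^-,E)$; $K$-equivariance of $\varphi_\lambda(T)$ is clear since both $T$ and $\iota_\sN$ are $K$-equivariant and $V_\lambda^{\mf n^+}$ is $K$-stable. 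Next I claim $\mathrm{res}_\lambda$ is an isomorphism: because $\sN(X,\sE)=\bigoplus_{\nu\in\Lambda}m_\nu^\sE V_\nu$ and the $K$-modules $V_\nu^{\mf n^+}\cong E_\nu$ are pairwise non-isomorphic, both $\Hom_U(V_\lambda,\sN(X,\sE))$ and $\Hom_K(V_\lambda^{\mf n^+},\sN(X,\sE)^{\mf n^+})$ have dimension $m_\lambda^\sE$, and restriction is injective because $\mf n^+\subseteq\mf b$ implies $V_\lambda^{\mf n^+}$ generates $V_\lambda$ as a $\mf g$-module ($U(\mf g)V_\lambda^{\mf n^+}=U(\mf n^-)V_\lambda^{\mf n^+}=V_\lambda$); here $K$-equivariance and $\mf l$-equivariance coincide since $K$ is connected. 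As $\iota_\sN$ is injective (Theorem~\ref{thm:IdTheorem}), post-composition with $\iota_\sN$ is injective, hence $\varphi_\lambda$ is a monomorphism, which is the first assertion.

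For the second assertion, assume in addition $\sO(X,\sE)\neq\{0\}$ (so a fortiori $\sN(X,\sE)\neq\{0\}$); it suffices to show the embedding $\iota_\sN:\sN(X,\sE)^{\mf n^+}\to\sP(\mf n^-,E)$ is onto, for then post-composition with $\iota_\sN$, hence $\varphi_\lambda$, is surjective. The point is that $\iota_\sN$ is multiplicative: $(fg)|_{\mf n^+}=f|_{\mf n^+}g|_{\mf n^+}$ together with uniqueness of $q$-expansions gives $\iota_\sN(fg)=\iota_\sN(f)\iota_\sN(g)$ for $f\in\sN(X)$, $g\in\sN(X,\sE)$, and $\sN(X,\sE)^{\mf n^+}$ is stable under multiplication by $\sN(X)^{\mf n^+}$ (Corollary~\ref{cor:NearHolModule} and the Leibniz rule for $d\pi_\CC(\mf n^+)$). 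By Proposition~\ref{prop:OmegaNearlyHolomorphic}, $\iota_\sN(\omega((v,0,0)))$ is the linear form $y\mapsto\kappa(v,y)$ on $\mf n^-$, and $\omega((v,0,0))\in\sN(X)^{\mf n^+}$; since $\kappa$ restricts to a perfect pairing of $\mf n^+$ and $\mf n^-$, these forms span $\sP_1(\mf n^-)$, so the unital subalgebra $\iota_\sN(\sN(X)^{\mf n^+})$ contains $\sP_1(\mf n^-)$ and thus equals $\sP(\mf n^-)$. On the other hand, $\sO(X,\sE)$ is a nonzero finite-dimensional $U$-module (Proposition~\ref{prop:FiniteDim}), so $\sO(X,\sE)^{\mf n^+}\neq\{0\}$, and by the key lemma $\iota_\sN$ maps it $K$-equivariantly and injectively into the $E$-valued constants, i.e.\ into the irreducible $K$-module $E$, hence onto $E$. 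Multiplying, $\iota_\sN(\sN(X,\sE)^{\mf n^+})\supseteq\sP(\mf n^-)\cdot E=\sP(\mf n^-,E)$, so $\iota_\sN$ is an isomorphism of $K$-modules $\sN(X,\sE)^{\mf n^+}\cong\sP(\mf n^-,E)$ and $\varphi_\lambda$ is an isomorphism.

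Finally, the weight statement is immediate from this: $\sN(X,\sE)^{\mf n^+}=\bigoplus_{\nu\in\Lambda}m_\nu^\sE\,V_\nu^{\mf n^+}$ is, as a $K$-module, a direct sum of copies of the $E_\nu$ with $\nu\in\Lambda$, so transporting along the isomorphism shows every $K$-type of $\sP(\mf n^-,E)$ is isomorphic to some $V_\nu^{\mf n^+}$ with $\nu\in\Lambda$, i.e.\ its highest weight for $\Phi^+_c$ is dominant integral for $\Phi^+$. The main obstacle is the surjectivity step of the previous paragraph: once the key lemma is in place the rest is formal representation theory, but surjectivity of $\iota_\sN$ on $\mf n^+$-invariants genuinely requires the explicit Jordan-theoretic generators $\omega((v,0,0))$ of Proposition~\ref{prop:OmegaNearlyHolomorphic}, and it fails without the hypothesis $\sO(X,\sE)\neq\{0\}$, which is precisely what supplies the $E$-valued constants one multiplies by; this is why only the monomorphism statement survives in general.
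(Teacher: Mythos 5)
Your proposal is correct and follows essentially the same route as the paper: the crucial identity $(\partial_v+\bar\partial_{Q_{\bar z}v})q=0$, the equivalence ``$\mf n^+$-invariant $\Leftrightarrow$ constant coefficients'' via uniqueness of the $q$-expansion, injectivity from the identity theorem plus irreducibility, and surjectivity from the Jordan-theoretic generators together with the $E$-valued constants supplied by $\sO(X,\sE)\neq\{0\}$. The only (harmless) variations are that you derive the key identity from the $U$-equivariance of $\omega$ and $[\mf n^+,\mf n^+]=0$ rather than from the quasi-inverse derivative formulas, and you package surjectivity as the $K$-isomorphism $\sN(X,\sE)^{\mf n^+}\cong\sP(\mf n^-,E)$ composed with the restriction isomorphism, whereas the paper argues via a bijection of highest weight vectors.
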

\begin{proof}
The prove of this theorem relies on the crucial fact that
\begin{align}\label{eq:qvanishing}
	(\partial_v+\bar\partial_{Q_{\bar z} v})q(z) = 0\quad\text{for all $v\in\mf n^+$.}
\end{align}
Indeed, due to \cite[\S\,7.8]{Lo77}, the holomorphic and anti-holomorphic derivatives of $q(z) = \bar z^{-z}$ are given by 
\[
	\del_vq(z) = -\B{\bar z}{-z}^{-1}Q_{\bar z} v,\quad
	\delbar_wq(z) = \B{\bar z}{-z}^{-1}w,
\]
which implies \eqref{eq:qvanishing}. Now, for $T\in\Hom_U(V_\lambda,L^2(X,\sE))$ and $f\in\Im T$ it follows from Proposition~\ref{prop:NearlyHolomorphicAndUFinite} that $f$ is nearly holomorphic. Then, \eqref{eq:qvanishing} and Lemma~\ref{lem:localLiealgebraAction} yields
\[
	d\pi_\CC(v,0,0)f(z) = \sum_{|i|\leq m} \partial_vf_\b i(z)\cdot q(z)^\b i\,.
\]
Since the coefficient sections of nearly holomorphic sections are unique (Proposition~\ref{prop:LocalDescription}), we conclude that $d\pi_\CC(v,0,0)f = 0$ for all $v\in\mf n^+$ if and only if $f_\b i$ are constant for all $\b i$, which is equivalent to the condition that $p_f=\iota_\sN(f)$ is an element of $\sP(\mf n^-,E)$. This shows that $\iota_\sN\circ T$ maps $f\in V_\lambda^{\mf n^+}$ into $\Poly(\mf n^-,E)$, and since $\iota_\sN$ is $K$-equivariant, we conclude that $\varphi_\lambda$ is well-defined. Obviously, $\varphi_\lambda$ is linear. In order to prove injectivity, assume that $\varphi_\lambda(T) = 0$. Since $\iota_\sN$ is injective, it follows that $T|_{V_\lambda^{\mf n^+}} = 0$, and since $V_\lambda^{\mf n^+}$ is a non-trivial subspace of $V_\lambda$, irreducibility of $V_\lambda$ implies that $\ker T = V_\lambda$, so $T=0$.\\
Now assume that $\sO(X,\sE)\neq\{0\}$. To prove surjectivity of $\varphi_\lambda$ and the additional result on highest weights occurring in $\Poly(\mf n^-,E)$, we first prove that $\sP(\mf n^-,E)$ is contained in the image of $\iota_\sN$. Let $f\in\sO(X,\sE)$ be the highest weight vector. Then, $p_f=\iota_\sN(f)$ is an element of $\sP(\mf n^-,E)$, and since $f(z) = p_f(q(z))$ must be holomorphic, it follows that $p_f$ is constant. Since $\sE$ is assumed to be irreducible, the $K$-equivariance of $\iota_\sN$ therefore implies that $\Im\iota_\sN$ contains all constant polynomials of $\sP(\mf n^-,E)$. An arbitrary polynomial $p\in\sP(\mf n^-,E)$ can be written as $p = \sum p_i e_i$ for $e_i\in E$ and $p_i\in\Poly(\mf n^-)$. Due to Corollary~\ref{cor:nearlyholomorphicgenerators}, there exist nearly holomorphic functions $g_i\in\sN(X)$ such that $g_i(z) = p_i(q(z))$. Now, set $f:=\sum g_if_i$, where $f_i$ denotes the holomorphic section corresponding to the constant $e_i$. Then, $f$ is nearly holomorphic (Corollary~\ref{cor:NearHolModule}), and $p = \iota_\sN(f)$. Therefore, $\sP(\mf n^-,E)$ is indeed contained in the image of $\iota_\sN$. Firstly, this implies that $\iota_\sN$ induces a bijection between $U$-highest weight vectors $f_\lambda$ of weight $\lambda$ in $\sN(X,\sE)$ and $K$-highest weight vectors $p_\lambda$ in $\Poly(\mf n^-,E)$ of the same weight. In particular, all highest weights occurring in $\Poly(\mf n^-,E)$ must be dominant integral for $\Phi^+$. Secondly, recall that any $S\in\Hom_K(V_\lambda^{\mf n^+},\Poly(\mf n^-,E))$ is uniquely determined by the image $p_\lambda = Sv_\lambda$ of the highest weight vector $v_\lambda$ of $V_\lambda^{\mf n^+}$. Since $v_\lambda$ is also the highest weight vector of $V_\lambda$, there exists a homomorphism $T\in\Hom_U(V_\lambda,L^2(X,\sE))$ such that $Tv_\lambda = f_\lambda$, where $f_\lambda$ is the $U$-highest weight vector determined by $p_\lambda = \iota_\lambda f_\lambda$. We therefore conclude that $S = \varphi_\lambda(T)$, which shows that $\varphi_\lambda$ is surjective.
\end{proof}

\begin{remark}
	A more explicit formula for the isomorphism $\varphi_\lambda$ is given in
	Section~\ref{sec:taylor}. In the following, irreducible subrepresentations of a representation 
	of $U$ (resp. $K$) are called $U$-\emph{types} (resp. $K$-\emph{types}). Then, 
	Theorem~\ref{thm:highestweights} states that there is a bijection between $U$-types in 
	$L^2(X,\sE)$ and $K$-types in $\Poly(\mf n^-,E)$. As its proof shows, this bijection is given by 
	an explicit correspondence between highest weight vectors. 
\end{remark}

\begin{corollary}\label{cor:highestweightcorrespondence}
	Assume that $\sO(X,\sE)\neq\{0\}$, and fix $\lambda\in\Lambda$. Then, $f\in L^2(X,\sE)$ is a 
	$U$-highest weight vector of weight $\lambda$ if and only if $f(z) = p(q(z))$ for all
	$z\in\mf n^+$ and $p\in\Poly(\mf n^-,E)$ is a $K$-highest weight vector of weight $\lambda$.
\end{corollary}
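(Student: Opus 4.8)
\emph{Proof idea.} The plan is to read this corollary off from the proof of Theorem~\ref{thm:highestweights}, where it is shown that $\iota_\sN$ induces a bijection between $U$-highest weight vectors of weight $\lambda$ in $\sN(X,\sE)$ and $K$-highest weight vectors of weight $\lambda$ in $\Poly(\mf n^-,E)$; the two remaining points are to pass from $\sN(X,\sE)$ to $L^2(X,\sE)$ and to spell out the identity $\iota_\sN(f)=p$ as the pointwise formula $f(z)=p(q(z))$ on $\mf n^+$. Throughout I would use that, for the chosen Borel $\mf b$ with $\mf n^+\subseteq\mf b$ and $\mf b\cap\mf k_\CC$ a Borel of $\mf k_\CC$, an element $f$ is a $U$-highest weight vector of weight $\lambda$ if and only if it is a nonzero $\mf h$-weight vector of weight $\lambda$ annihilated by $d\pi_\CC(\mf n^+)$ and by $d\pi_\CC(\mf g_\beta)$ for $\beta\in\Phi^+_c$; equivalently, since $\mf h\subseteq\mf k_\CC$, this says $f$ is annihilated by $\mf n^+$ \emph{and} $f$ is a $K$-highest weight vector of weight $\lambda$ with respect to $\Phi^+_c$.

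For the forward direction, assume $f\in L^2(X,\sE)$ is a $U$-highest weight vector of weight $\lambda$. Being $U$-finite, $f$ is nearly holomorphic by Proposition~\ref{prop:NearlyHolomorphicAndUFinite}, so it has the local expansion \eqref{eq:localnearlyhol} on $\mf n^+$ and $p_f=\iota_\sN(f)$ is defined. The computation in the proof of Theorem~\ref{thm:highestweights} (using \eqref{eq:qvanishing} and Lemma~\ref{lem:localLiealgebraAction}) gives $d\pi_\CC(v,0,0)f(z)=\sum_{\b i}\partial_vf_\b i(z)\cdot q(z)^\b i$ for $v\in\mf n^+$, and since $d\pi_\CC(\mf n^+)f=0$ the uniqueness of the coefficient sections (Proposition~\ref{prop:LocalDescription}) forces every $f_\b i$ to be constant; hence $p_f\in\Poly(\mf n^-,E)$ and $f(z)=p_f(q(z))$ on $\mf n^+$. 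Because $\iota_\sN$ is $K$-equivariant and $\mf h\subseteq\mf k_\CC$, the remaining conditions on $f$ (weight $\lambda$, annihilation by the $\Phi^+_c$-root spaces) transfer verbatim to $p_f$, so $p:=p_f$ is a $K$-highest weight vector of weight $\lambda$ in $\Poly(\mf n^-,E)$.

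For the converse, let $p\in\Poly(\mf n^-,E)$ be a $K$-highest weight vector of weight $\lambda$. The surjectivity part of the proof of Theorem~\ref{thm:highestweights} --- which uses Corollary~\ref{cor:nearlyholomorphicgenerators} together with Corollary~\ref{cor:NearHolModule} to realize every element of $\Poly(\mf n^-,E)$ as $\iota_\sN$ of a nearly holomorphic section --- produces $\tilde f\in\sN(X,\sE)\subseteq L^2(X,\sE)$ with $\tilde f(z)=p(q(z))$ on $\mf n^+$. Since $\tilde f$ is smooth on all of $X$, the given $f$ is (by hypothesis) smooth on $\mf n^+$, and the two agree on the dense full-measure set $\mf n^+$, they represent the same element of $L^2(X,\sE)$. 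Running the forward argument backwards: $p\in\Poly(\mf n^-,E)$ yields $d\pi_\CC(\mf n^+)\tilde f=0$, and $K$-equivariance of $\iota_\sN$ turns the $K$-highest weight properties of $p$ into annihilation of $\tilde f$ by the $\Phi^+_c$-root spaces together with $\mf h$-weight $\lambda$; as $\mf n^+$ and the $\Phi^+_c$-root spaces span the nilradical of $\mf b$, this makes $f=\tilde f$ a $U$-highest weight vector of weight $\lambda$ (and $\lambda\in\Lambda$ by hypothesis, consistent with the last assertion of Theorem~\ref{thm:highestweights}).

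The only delicate point I anticipate is the bookkeeping of positive root spaces: one must split ``$U$-highest weight vector'' cleanly into the $\mf n^+$-annihilation handled by Theorem~\ref{thm:highestweights} and a genuinely $K$-theoretic highest weight condition, and note that $K$-equivariance of $\iota_\sN$ --- which is all that is available, since the K\"ahler potential $\Psi$ is only $K$-invariant, not $L$-invariant --- is exactly what is needed to transport the latter. The passage from agreement on the dense open subset $\mf n^+$ to equality in $L^2(X,\sE)$ is routine once one observes that both representatives are continuous there.
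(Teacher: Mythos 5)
Your proposal is correct and takes essentially the same route as the paper: the corollary is read off, exactly as you do, from Proposition~\ref{prop:NearlyHolomorphicAndUFinite} (a $U$-highest weight vector is $U$-finite, hence nearly holomorphic) together with the bijection between $U$-highest weight vectors in $\sN(X,\sE)$ and $K$-highest weight vectors in $\Poly(\mf n^-,E)$ established inside the proof of Theorem~\ref{thm:highestweights}, with $\iota_\sN(f)=p$ unwound as $f(z)=p(q(z))$ on $\mf n^+$. Your handling of the split of the $U$-highest weight condition into $\mf n^+$-annihilation plus the $\Phi_c^+$-condition, and of the passage from agreement on the dense set $\mf n^+$ to equality in $L^2(X,\sE)$, matches the paper's intent.
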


\begin{remark}
	In the case of the trivial line bundle, $\sE = X\times\CC$, Theorem~\ref{thm:highestweights}
	implies that $U$-types in $L^2(X)$ correspond bijectively to $K$-types in $\sP(\mf n^-)$. In
	particular, $L^2(X)$ decomposes multiplicity free if and only if $\sP(\mf n^-)$ does so. On the
	one hand, the decomposition of $L^2(X)$ into $U$-types is known from the Cartan--Helgason
	theorem, see \cite[V\S4]{He84}. On the other hand, the $L$-type decomposition of $\sP(\mf n^-)$
	is known due to the work of Hua (classical, \cite{Hu63}), Kostant (unpublished), and Schmid
	\cite{Sc69}. We thus obtain that the Cartan--Helgason theorem (applied to irreducible Hermitian 
	symmetric spaces of compact type) is equivalent to the Hua--Kostant--Schmid decomposition of the 
	polynomial algebra $\Poly(\Vm)$.
\end{remark}

For general vector bundles, the Hua--Kostant--Schmid decomposition can be used to obtain more explicit results of the weights and multiplicities occurring in the $U$-type decomposition of $L^2(X,\sE)$. We therefore briefly recall the details of the Hua--Kostant--Schmid decomposition.

We choose the lexicographic order of roots corresponding to the simple roots $(\alpha_1,\ldots, \alpha_\ell)$, i.e., $\alpha>0$ if $\alpha = \sum m_i\alpha_i$ with $m_i>0$ for the first non-zero coefficient. In particular, we have $\alpha>\beta$ for all $\alpha\in\Phi_{nc}^+$, $\beta\in\Phi_c$. Let $(\gamma_1,\ldots,\gamma_r)$, be the maximal system of strongly orthogonal roots, such that $\gamma_i$ is the \emph{highest} element of $\Phi_{nc}^+$ strongly orthogonal to $\gamma_j$ for $j<i$, i.e., $\gamma_i\pm\gamma_j$ are no roots.\footnote{We note that our system of strongly orthogonal roots differs from the system $(\tilde\gamma_1,\ldots,\tilde\gamma_r)$ originally defined by Harish-Chandra \cite{HC56} (which is used in \cite{Sc69}) by $\gamma_i = w_0\tilde\gamma_i$, where $w_0$ is the longest element of the Weyl group of $\Phi_c$.}

\begin{theorem}[Hua--Kostant--Schmid \cite{FK90,Hu63,Sc69}]\label{thm:HSK}
	The polynomial algebra $\Poly(\mf n^-)$ decomposes under the action of 
	$L$ multiplicity free into
	\[
		\Poly(\mf n^-) = \bigoplus_{\b m\in\NN^r_\geq}\Poly_\b m(\mf n^-)\,,
	\]
	where $\NN^r_\geq:=\set{\b m =(m_1,\ldots,m_r)\in\NN^r}{m_1\geq\cdots\geq m_r\geq 0}$, and
	$\Poly_\b m(\mf n^-)$ is an irreducible $L$-module of highest weight
	\[
		\gamma_\b m := m_1\gamma_1+\cdots+m_r\gamma_r
	\]
	with respect to $\Phi_c^+$.
\end{theorem}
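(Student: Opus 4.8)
The theorem is an instance of the general fact that $\Vm$ is a \emph{multiplicity-free} space for $L$; accordingly the plan is to split the proof into two parts: (i) establishing multiplicity-freeness of $\Poly(\Vm)$ under $L$, and (ii) identifying which highest weights occur. By the Vinberg--Kimelfeld criterion, (i) is equivalent to $\Vm$ being a spherical $L$-variety, i.e.\ to a Borel subgroup $B'\subseteq L$ having a dense orbit; and this in turn is equivalent to every $\mf h$-weight space of the algebra of semi-invariants $\Poly(\Vm)^{N'}$ (with $N'$ the unipotent radical of $B'$) being at most one-dimensional. Both (i) and (ii) will therefore follow once $\Poly(\Vm)^{N'}$ is described explicitly, and the bulk of the work is this description.

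For the geometric input I would fix a frame $e_1,\dots,e_r$ in the Jordan pair $(\Vp,\Vm)$ and pass to the associated Peirce decomposition of $\Vm$, with its ascending flag of principal Peirce subspaces $\Vm_{(1)}\subset\cdots\subset\Vm_{(r)}=\Vm$. Choosing $B'$ so that $\mf b'$ carries the positive system $\Phi_c^+$ \emph{and} $B'$ stabilises each $\Vm_{(k)}$ is possible precisely because the $\gamma_i$ in the statement are taken to be the successive \emph{highest} strongly orthogonal non-compact roots; this is also why the highest weights come out as $\sum m_i\gamma_i$ rather than in a reflected form. A Jordan-theoretic Gram--Schmidt argument — over $\CC$ an element of $\Vm$ all of whose principal minors are nonzero admits a unique ``$LDU$''-type normal form, so all such elements form a single $B'$-orbit, in analogy with symmetric matrices under the triangular group — then shows that $\{y\in\Vm:\Delta_k(y)\neq 0\text{ for }k=1,\dots,r\}$ is one dense $B'$-orbit, where $\Delta_k\colon\Vm\to\CC$ denotes the $k$-th principal minor of the Jordan pair determinant $\Delta$ relative to the chosen flag. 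This proves (i).

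For (ii): by construction each $\Delta_k$ is $N'$-invariant and an $\mf h$-weight vector, hence a highest weight vector; evaluating $\Delta_k$ on the torus orbit through $e_1+\cdots+e_k$ and using that $e_i$ has $\mf h$-weight $-\gamma_i$ in $\Vm$ identifies its weight as $\gamma_1+\cdots+\gamma_k$. Each $\Delta_k$ is an irreducible polynomial (classically, being a principal minor of the generic norm) and the $\Delta_k$ are algebraically independent; since $\Poly(\Vm)$ is a UFD and the zero set of any semi-invariant is $B'$-stable and hence contained in $\bigcup_k\{\Delta_k=0\}$, every semi-invariant is a scalar times a monomial $\prod_k\Delta_k^{a_k}$, so $\Poly(\Vm)^{N'}=\CC[\Delta_1,\dots,\Delta_r]$. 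Consequently every $L$-highest weight vector in $\Poly(\Vm)$ is such a monomial, of weight $\sum_k a_k(\gamma_1+\cdots+\gamma_k)=\sum_j m_j\gamma_j$ with $m_j:=a_j+a_{j+1}+\cdots+a_r$; since $(a_1,\dots,a_r)\mapsto(m_1,\dots,m_r)$ is a bijection onto $\NN^r_\geq$ and $\gamma_1,\dots,\gamma_r$ are linearly independent, these weights are pairwise distinct, which re-proves multiplicity-freeness. Finally, setting $\Poly_{\b m}(\Vm):=L\cdot\bigl(\prod_k\Delta_k^{a_k}\bigr)$ yields, by multiplicity-freeness, an irreducible $L$-submodule of highest weight $\gamma_{\b m}$, and since the monomials exhaust the $N'$-invariants these submodules span all of $\Poly(\Vm)$, giving the asserted decomposition.

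The hard point is the ``only if'' half of the description of $\Poly(\Vm)^{N'}$ together with the attendant bookkeeping: proving \emph{uniformly} — without descending to the classification — that the semi-invariants are generated by the principal minors, that these minors can be aligned with the fixed positive system $\Phi_c^+$ so that $\Delta_k$ has weight $\gamma_1+\cdots+\gamma_k$, and that the $B'$-orbit picture is exactly as claimed. This is where one must invoke the Jordan calculus (Peirce relations, the transformation behaviour of $\Delta$ and of the Bergman operator $\B{x}{y}$ under $L$). An alternative that avoids the spherical-variety input altogether is to quote Schmid's construction via limits of holomorphic discrete series \cite{Sc69}, at the cost of stepping outside the Jordan-theoretic framework used here.
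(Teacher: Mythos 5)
The paper offers no proof of Theorem~\ref{thm:HSK} to compare against: it is quoted as a classical result with references to Hua, Kostant (unpublished), Schmid and Faraut--Kor\'anyi, and the only related material in the paper is the list of explicit highest weight vectors in Remark~\ref{rmk:ExplicitHighestWeights} (cited from Upmeier \cite{Up86}). Your sketch is essentially the standard modern proof found in the cited sources \cite{FK90,Up86}: multiplicity-freeness via the Vinberg--Kimelfeld criterion and a dense $B'$-orbit, identification of the semi-invariant algebra $\Poly(\Vm)^{N'}$ with $\CC[\Delta_1,\dots,\Delta_r]$ through irreducibility of the principal minors plus the UFD/divisor argument, and the bookkeeping $\sum_k a_k(\gamma_1+\cdots+\gamma_k)=\gamma_{\b m}$ with $(a_k)\mapsto(m_j)$ a bijection onto $\NN^r_\geq$. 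That outline is correct, and it is consistent with the paper's conventions: the weights come out as $+\gamma_{\b m}$ for the successive-highest strongly orthogonal system precisely because of the normalization discussed in the paper's footnote (the $w_0$-twist of Harish-Chandra's system), and your $\Delta_k$ agree with the $\Delta_i(w)=\Delta(e_i^+,e_i^--w)$ of Remark~\ref{rmk:ExplicitHighestWeights}.

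The caveat is the one you name yourself: the substantive content is exactly the part you defer, namely (a) that a Borel $B'\subseteq L$ compatible with $\Phi_c^+$ stabilizes the Peirce flag, so that each $\Delta_k$ really is an $N'$-invariant weight vector of weight $\gamma_1+\cdots+\gamma_k$; (b) that over $\CC$ the set $\{\Delta_1\cdots\Delta_r\neq 0\}$ is a single $B'$-orbit (the Jordan ``$LDU$'' normal form, uniformly and not case by case), and that each $\Delta_k$ is irreducible as a polynomial on all of $\Vm$ with the $\Delta_k$ pairwise non-associate. These require genuine Jordan calculus (Peirce relations, transformation rules for $\Delta$ and $\B{x}{y}$) or the Harish-Chandra root lemmas, and without them the argument is a roadmap rather than a proof. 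Since the paper itself discharges this by citation, your proposal is best read as reconstructing the proof of the quoted reference rather than as a divergent route; if you intend to include it, either carry out (a) and (b) or cite \cite{FK90,Up86,Sc69} for them explicitly.
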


\begin{remark}\label{rmk:ExplicitHighestWeights}
	There are well-known Jordan theoretic formulas for the highest weight vectors in
	$\Poly(\mf n^-)$ due to Upmeier \cite{Up86}, essentially involving the Jordan pair determinant 
	$\Delta$. According to Corollary~\ref{cor:highestweightcorrespondence}, this yields explicit 
	formulas for the highest weight vectors in $L^2(X)$. Explicitly, let 
	$(X_{\gamma_i},H_{\gamma_i},Y_{\gamma_i})$ denote the components of the $\mf{sl}_2$-triple 
	corresponding to $\gamma_i$, then
	\[
		p_\b m(w):= \Delta_1(w)^{m_1-m_2}\cdots\Delta_{r-1}(w)^{m_{r-1}-m_r}\cdot\Delta_r(w)^{m_r}
	\]
	is the highest weight vector of weight $\gamma_\b m$, where 
	$\Delta_i(w):=\Delta(e_i^+,e_i^- - w)$ with 
	$e_i^+ := X_{\gamma_1}+\cdots+X_{\gamma_i}$,
	$e_i^- := -Y_{\gamma_1}-\cdots-Y_{\gamma_i}$. Consequently,
	\[
		f_\b m(z) := p_\b m(q(z))
	\]
	with $z\in\mf n^+\subseteq X$ is the local picture of the highest weight vector of $L^2(X)$ with 
	weight $\gamma_\b m$.
\end{remark}

Now, for general vector bundles $\sE = G\times_P E$ the space $\Poly(\mf n^-,E)$ of $E$-valued complex polynomials on $\mf n^-$ is canonically isomorphic to the tensor product $\Poly(\mf n^-)\otimes E$, and this isomorphism is $L$-equivariant. Recall, that the highest weights occurring in a tensor product $E_\lambda\otimes E_\mu$ (with $\lambda,\mu\in\Lambda_c$) all have the form $\lambda+\nu$, where $\nu$ is a weight of $E_\mu$, and the multiplicity of $\lambda+\nu$ in $E_\lambda\otimes E_\mu$ is bounded by the dimension of the weight space $(E_\mu)^\nu\subseteq E_\mu$, see e.g.\ \cite[\S\,3]{Ku10}. Therefore, Theorem~\ref{thm:highestweights} and the Hua--Kostant--Schmid decomposition yield the following result. 

\begin{corollary}\label{cor:weightdescription}
	Assume that $\sO(X,\sE)\neq\{0\}$. Then, there is a bijection between $U$-types in $L^2(X,\sE)$ 
	and $K$-types in
	\[
		\bigoplus_{\b m\in\NN^r_\geq}\Poly_\b m(\Vm)\otimes E\,.
	\]
	In particular, all highest weights occurring in $L^2(X,\sE)$ are of the form $\lambda = 
	\gamma_\b m + \mu$, where $\mu$ is a weight in $E$, and its multiplicity, $m^\sE_\lambda$, is 
	bounded by the dimension of $E$.
\end{corollary}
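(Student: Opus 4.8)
The plan is to combine Theorem~\ref{thm:highestweights} with the Hua--Kostant--Schmid decomposition (Theorem~\ref{thm:HSK}); with those two results in hand essentially nothing remains but bookkeeping with tensor products of $L$-modules. Since $\sO(X,\sE)\neq\{0\}$ we have in particular $\sN(X,\sE)\neq\{0\}$, so Theorem~\ref{thm:highestweights} applies in its strong form: for every $\lambda\in\Lambda$ it gives an isomorphism between $\Hom_U(V_\lambda,L^2(X,\sE))$ and $\Hom_K(V_\lambda^{\mf n^+},\Poly(\mf n^-,E))$, and every $K$-type occurring in $\Poly(\mf n^-,E)$ is of the form $V_\lambda^{\mf n^+}\cong E_\lambda$ with $\lambda\in\Lambda$; thus the $U$-types in $L^2(X,\sE)$ are in bijection, counted with multiplicity, with the $K$-types in $\Poly(\mf n^-,E)$. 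Now $\Poly(\mf n^-,E)\cong\Poly(\mf n^-)\otimes E$ as $L$-modules (the $L$-equivariant identification recalled before the statement), and $\Poly(\mf n^-)\cong\bigoplus_{\b m\in\NN^r_\geq}\Poly_\b m(\mf n^-)$ by Theorem~\ref{thm:HSK}, so
\[
	\Poly(\mf n^-,E)\cong\bigoplus_{\b m\in\NN^r_\geq}\Poly_\b m(\mf n^-)\otimes E
\]
as $L$-modules and therefore as $K$-modules. Feeding this into the preceding bijection yields the first assertion.

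For the shape of the highest weights I would invoke the tensor-product fact recalled just before the statement (from \cite[\S\,3]{Ku10}): since $\sE$ is irreducible, $E$ is an irreducible $L$-module, and $\Poly_\b m(\mf n^-)$ is irreducible of highest weight $\gamma_\b m$ with respect to $\Phi_c^+$, so every $K$-type in $\Poly_\b m(\mf n^-)\otimes E$ has highest weight $\gamma_\b m+\mu$ with $\mu$ a weight of $E$, occurring with multiplicity at most $\dim E^\mu$. A $U$-type $V_\lambda$ of $L^2(X,\sE)$ corresponds under the bijection to the $K$-type $V_\lambda^{\mf n^+}\cong E_\lambda$, whose highest weight is $\lambda$; hence $\lambda=\gamma_\b m+\mu$ for some $\b m\in\NN^r_\geq$ and some weight $\mu$ of $E$.

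Finally, for the multiplicity bound I would compute
\[
	m^\sE_\lambda=\dim\Hom_K\big(V_\lambda^{\mf n^+},\Poly(\mf n^-,E)\big)
	=\sum_{\b m\in\NN^r_\geq}\dim\Hom_K\big(E_\lambda,\Poly_\b m(\mf n^-)\otimes E\big)
	\leq\sum_{\b m\in\NN^r_\geq}\dim E^{\lambda-\gamma_\b m},
\]
with the convention that $\dim E^\nu=0$ when $\nu$ is not a weight of $E$ (the middle equality uses the isomorphism of the first paragraph together with $V_\lambda^{\mf n^+}\cong E_\lambda$, and the inequality is the cited tensor-product bound). The one point that needs care---indeed the only genuinely new input beyond the quoted theorems---is the telescoping of this last sum: since $\gamma_1,\dots,\gamma_r$ are strongly orthogonal they are linearly independent, so $\b m\mapsto\gamma_\b m$ is injective on $\NN^r_\geq$, and hence the finitely many nonzero summands are indexed by pairwise distinct weights $\lambda-\gamma_\b m$ of $E$; consequently $\sum_{\b m}\dim E^{\lambda-\gamma_\b m}\leq\sum_\nu\dim E^\nu=\dim E$, the last sum running over all weights of $E$. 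Without the linear independence of the $\gamma_i$ one would be left with an unrestricted sum over $\b m$, which is why this step---rather than the essentially formal reduction to $\Poly(\mf n^-)\otimes E$---is the crux.
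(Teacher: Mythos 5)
Your proof is correct and follows essentially the same route as the paper: the bijection comes from Theorem~\ref{thm:highestweights} combined with the $L$-equivariant identification $\Poly(\mf n^-,E)\cong\Poly(\mf n^-)\otimes E$ and the Hua--Kostant--Schmid decomposition, and the weight shape and multiplicity bound come from the cited tensor-product fact. The only difference is that you spell out the step the paper leaves implicit, namely that strong orthogonality makes $\b m\mapsto\gamma_\b m$ injective so the sum $\sum_{\b m}\dim E^{\lambda-\gamma_\b m}$ is dominated by $\dim E$ --- a worthwhile clarification, but not a different argument.
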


\begin{remark}\label{rmk:linebundles}
	If $\sE=\linebundle$ is a line bundle, i.e., $E$ is 1-dimensional, and $\sE$ admits non-tivial 
	holomorphic sections, then Corollary~\ref{cor:weightdescription} yields that 
	$L^2(X,\linebundle)$ decomposes multiplicity free into
	\[
		L^2(X,\linebundle) = \widehat\bigoplus_{\b m\in\NN^r_\geq} V_{\gamma_\b m + \nu},
	\]
	where $\nu$ is the highest weight of $\rho:P\to E$, which is determined by the action of the 
	center of $L$ on $E$. This is a special case of Schlichtkrull's generalization of the 
	Cartan--Helgason theorem, see \cite{Sch84}. In our subsequent paper \cite{S12}, we obtain the 
	full version of Schlichtkrull's result by a more detailed analysis of nearly holomorphic 
	sections.
\end{remark}

\subsection{Application}
In this section, we apply our main result from the last section to the holomorphic tangent bundle $T^{(1,0)}$. This is the $G$-homogeneous vector bundle corresponding to the adjoint action of $L$ on $\mf n^+$, so $E=\mf n^+$ and $\rho(h) = \Ad_h$ for $h\in L$. As above, we set $hv := \Ad_h v$ for $h\in L$ and $v\in\mf n^+$. It is well-known that $T^{(1,0)}$ admits non-trivial holomorphic sections, e.g.\ this follows from the Borel-Weil Theorem and the observation that the highest weight of $\mf n^+$ is the highest root of $\mf g$, which is dominant integral for $\Phi^+$, see \cite[VI.1.8]{Bou02}. Therefore, in order to determine the multiplicities $m_\lambda$ of the decomposition
\[
	L^2(X,T^{(1,0)})= \widehat{\bigoplus_{\lambda\in\Lambda}}\; m_\lambda V_\lambda,
\]
Corollary~\ref{cor:weightdescription} yields that is suffices to determine the $K$-type decomposition of $\Poly_\b m(\mf n^-)\otimes\mf n^+$. 

We first recall the following result on tensor products due to Kostant \cite[Lemma~4.1]{Kos59}, see also \cite{Ku10}. For an irreducible representation $E_\lambda$ of $K$ of highest weight $\lambda\in \Lambda_c$, let $\Phi(E_\lambda)$ be the set of all weights $\nu\in\mf h^*$ with non-trivial corresponding weight space $(E_\lambda)^\nu\subseteq E_\lambda$. Recall, that $\Delta_c=\{\alpha_2,\ldots,\alpha_\ell\}$ is the set of simple roots in $\Phi_c$, and the $\mf{sl}_2$-triple corresponding to $\alpha_i\in\Delta_c$ is denoted by $(X_i,H_i,Y_i)$.

\begin{proposition}\label{prop:tensormultiplicity}
	For any $\lambda,\mu,\nu\in\Lambda_c$, the multiplicity of $E_\nu$ in $E_\lambda\otimes E_\mu$ is
	given by
	\[
		\dim\Set{v\in(E_\mu)^{\nu-\lambda}}
						{X_i^{\lambda(H_i)+1}v = 0\text{ for all $i=2,\ldots,\ell$}}.
	\]
\end{proposition}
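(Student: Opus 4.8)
\emph{Plan.} The idea is to rewrite the multiplicity as the dimension of a space of highest weight vectors in $E_\lambda\otimes E_\mu$, and then transport that space into $E_\mu$ by dualizing and evaluating at a lowest weight vector. Write $\mf n^\pm_c:=\bigoplus_{\beta\in\Phi^\pm_c}\mf g_\beta$, so that $\mf l=\mf n^-_c\oplus\mf h\oplus\mf n^+_c$ and $\mf n^+_c$ is generated by $X_2,\dots,X_\ell$. First I would use that finite-dimensional $\mf l$-modules are completely reducible and $E_\nu$ is irreducible, so that the multiplicity of $E_\nu$ in $E_\lambda\otimes E_\mu$ equals $\dim\Hom_\mf l(E_\nu,E_\lambda\otimes E_\mu)$; evaluation $T\mapsto T(v^+_\nu)$ at a highest weight vector then identifies this with $\mathcal H:=\set{w\in(E_\lambda\otimes E_\mu)_\nu}{\mf n^+_c\cdot w=0}$, the space of highest weight vectors of weight $\nu$ (surjectivity of the evaluation map uses complete reducibility of $E_\lambda\otimes E_\mu$: a weight-$\nu$ vector killed by $\mf n^+_c$ is a combination of the canonical highest weight vectors of the constituents isomorphic to $E_\nu$).

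Next I would identify $E_\lambda\otimes E_\mu$ with $\Hom_\CC(E^*_\lambda,E_\mu)$ as $\mf l$-modules in the usual way, so that $w\in\mathcal H$ corresponds to a linear map $g_w\colon E^*_\lambda\to E_\mu$ that is $\mf n^+_c$-equivariant and homogeneous of weight $\nu$, i.e.\ $g_w\big((E^*_\lambda)_\sigma\big)\subseteq(E_\mu)_{\sigma+\nu}$ for all $\sigma$. Fix a lowest weight vector $\xi\in E^*_\lambda$; it has weight $-\lambda$ and satisfies $\mf n^-_c\cdot\xi=0$. The structural input is the presentation of $E^*_\lambda$ as a cyclic $U(\mf n^+_c)$-module: $E^*_\lambda\cong U(\mf n^+_c)/I_\lambda$ via $u\mapsto u\cdot\xi$, where $I_\lambda:=\sum_{i=2}^\ell U(\mf n^+_c)\,X_i^{\lambda(H_i)+1}$ is the \emph{left} ideal generated by the elements $X_i^{\lambda(H_i)+1}$. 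This is the dual of the familiar presentation of $E_\lambda$ as $U(\mf n^-_c)\cdot v^+_\lambda$ modulo the left ideal $\sum_i U(\mf n^-_c)\,Y_i^{\lambda(H_i)+1}$; the exponents arise because $\xi$ is a lowest weight vector of $H_i$-eigenvalue $-\lambda(H_i)$ for the triple $(X_i,H_i,Y_i)$, whence $X_i^{\lambda(H_i)+1}\cdot\xi=0$ in $E^*_\lambda$.

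The final step is to show that $\Theta\colon\mathcal H\to E_\mu$, $w\mapsto g_w(\xi)$, is a linear isomorphism onto $\set{v\in(E_\mu)_{\nu-\lambda}}{X_i^{\lambda(H_i)+1}\cdot v=0\ \text{for}\ i=2,\dots,\ell}$, which yields the claim. Homogeneity of $g_w$ forces $\Theta(w)\in(E_\mu)_{\nu-\lambda}$; $\Theta$ is injective because $g_w$ is $\mf n^+_c$-equivariant and $E^*_\lambda=U(\mf n^+_c)\cdot\xi$, so $g_w$ is determined by $g_w(\xi)$; and the image lies in the prescribed set because $X_i^{\lambda(H_i)+1}\cdot g_w(\xi)=g_w\big(X_i^{\lambda(H_i)+1}\cdot\xi\big)=0$ for $w\in\mathcal H$. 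For surjectivity, given $v$ in the prescribed set I would define $g(u\cdot\xi):=u\cdot v$ for $u\in U(\mf n^+_c)$; this is well defined exactly when $I_\lambda\cdot v=0$, and since every element of the left ideal $I_\lambda$ has the form $\sum_i u_i\,X_i^{\lambda(H_i)+1}$ we get $I_\lambda\cdot v=\sum_i u_i\cdot(X_i^{\lambda(H_i)+1}\cdot v)=0$ from the defining equations on $v$. The resulting $g$ is $\mf n^+_c$-equivariant and homogeneous of weight $\nu$, hence corresponds to some $w\in\mathcal H$ with $\Theta(w)=g(\xi)=v$.

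The part I expect to be the crux is the presentation of $E^*_\lambda$ in the second step — precisely, that the relations making $E^*_\lambda$ a cyclic $U(\mf n^+_c)$-module are generated \emph{as a left ideal} by the single elements $X_i^{\lambda(H_i)+1}$. It is exactly this that collapses the well-definedness check in the last step to the stated equations on $v$ alone, rather than on all of $U(\mf n^+_c)\cdot v$; everything else is weight bookkeeping together with the identification $E_\lambda\otimes E_\mu\cong\Hom_\CC(E^*_\lambda,E_\mu)$. This presentation is a standard consequence of Verma module theory — it is dual to the Bernstein--Gelfand--Gelfand description of $E_\lambda$ used above — and may be cited from the work of Kostant and Kumar referenced in the statement, or deduced from the highest weight case via a Chevalley automorphism interchanging $\mf n^+_c$ and $\mf n^-_c$.
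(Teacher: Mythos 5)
Your proof is correct, but it takes a different route from the paper: the paper does not prove this proposition at all — it quotes it as Kostant's result \cite[Lemma~4.1]{Kos59} (see also \cite{Ku10}) and only adds a remark reducing the reductive Levi $K=Z(K)K_\ss$ to the semisimple case via Schur's lemma. What you have written is, in effect, a self-contained proof of the cited lemma, and it is the standard one: identify the multiplicity with the space of weight-$\nu$ vectors in $E_\lambda\otimes E_\mu$ killed by $\mf n_c^+$, transport it into $\Hom_\CC(E_\lambda^*,E_\mu)$, and evaluate at a lowest weight vector $\xi$ of $E_\lambda^*$, using the cyclic presentation $E_\lambda^*\cong U(\mf n_c^+)\big/\sum_i U(\mf n_c^+)X_i^{\lambda(H_i)+1}$. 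You correctly isolate that presentation as the crux; it is the dual (via a Chevalley involution) of the standard presentation of $E_\lambda$ as a quotient of the Verma module by the left ideal generated by $Y_i^{\lambda(H_i)+1}$, so the well-definedness check in your surjectivity step does collapse to the stated equations on $v$, as claimed. All the intermediate steps (the isomorphism $\Hom_{\mf l}(E_\nu,E_\lambda\otimes E_\mu)\cong\Set{w\in(E_\lambda\otimes E_\mu)_\nu}{\mf n_c^+\cdot w=0}$ by complete reducibility, the weight bookkeeping $g_w(\xi)\in(E_\mu)^{\nu-\lambda}$, injectivity from $E_\lambda^*=U(\mf n_c^+)\cdot\xi$, and surjectivity by factoring $u\mapsto u\cdot v$ through the presentation) are sound. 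A small bonus of your argument is that it treats the reductive $\mf l$ directly — the central character is absorbed into the $\mf h$-weight condition, since a nonzero weight space $(E_\mu)^{\nu-\lambda}$ forces $\nu$ and $\lambda+\mu$ to agree on $\mf z(\mf l)$ — so the paper's reduction remark is not needed; what the paper's approach buys instead is brevity, by outsourcing the Verma-module input to the literature.
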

\begin{remark}
	In \cite{Kos59}, it is assumed that $K$ is semi-simple. Here, $K=Z(K)K_\ss$ is reductive with 
	1-dimensional center $Z(K)$ and semi-simple part $K_\ss$. Due to Schur's Lemma, the center acts 
	on $E_\lambda$ and $E_\mu$ by scalars $c_\lambda$ and $c_\mu$, and hence the action of $Z(K)$ on 
	$E_\lambda\otimes E_\mu$ is given by the scalar $c_\lambda\cdot c_\mu$. Therefore, 
	Proposition~\ref{prop:tensormultiplicity} is an immediate consequence of the semi-simple case.
\end{remark}

This result on the multiplicities in tensor products can be applied to the decomposition of $\Poly_\b m(\mf n^-)\otimes\mf n^+$. Recall that $\gamma_\b m = m_1\gamma_1+\cdots+m_r\gamma_r$ is the highest weight of $\Poly_\b m(\mf n^-)$ with $m_1\geq\cdots\geq m_r$. For convenience, set $m_0:=+\infty$, $m_{r+1}:=0$, and let $e_j$ denote the $j$'th standard basis vector of $\NN^r$.

\begin{samepage}
\begin{proposition}\label{prop:tensordecomp}
	For all $\b m\in\NN^r_\geq$, the $K$-type decomposition of the tensor product
	$\Poly_\b m(\Vm)\otimes\mf n^+$ is given by
	\[
		\Poly_\b m(\Vm)\otimes\mf n^+ = \bigoplus_{\lambda\in\Lambda_\b m(\mf n^+)} E_\lambda
	\]
	where
	\begin{align}\label{eq:TangentBundleHeighestWeights}
		\Lambda_\b m(\mf n^+)
		:= \Set{\gamma_\b m + \beta\in\Lambda_c}
			{\begin{aligned}
				&\beta\in\Phi(\mf n^+)\text{ such that }\beta+\alpha_i\notin\Phi(\mf n^+)\\ 
				&\text{for all $i=2,\ldots,\ell$ with $\gamma_\b m(H_i)=0$}\end{aligned}}.
	\end{align}
	In particular, $\gamma_{\b m+e_j}\in\Lambda_\b m(\mf n^+)$ if and only if $m_j<m_{j-1}$.
\end{proposition}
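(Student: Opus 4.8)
The plan is to deduce the decomposition directly from Proposition~\ref{prop:tensormultiplicity}. I would apply it with $E_\lambda=\Poly_\b m(\mf n^-)$, which by Theorem~\ref{thm:HSK} is irreducible of highest weight $\gamma_\b m$, and $E_\mu=\mf n^+$, which is an irreducible $L$-module (since $X$ is irreducible) whose $\mf h$-weights are precisely the non-compact roots $\beta\in\Phi(\mf n^+)$, each occurring in the one-dimensional root space $\mf g_\beta$. Thus, for $\nu\in\Lambda_c$, the multiplicity of $E_\nu$ in $\Poly_\b m(\mf n^-)\otimes\mf n^+$ vanishes unless $\beta:=\nu-\gamma_\b m$ lies in $\Phi(\mf n^+)$, and in that case it equals $1$ provided $X_i^{\gamma_\b m(H_i)+1}X_\beta=0$ for all $i=2,\dots,\ell$, and $0$ otherwise. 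In particular the tensor product is multiplicity free, and everything reduces to rewriting the conditions on $X_\beta$ in root-theoretic terms.

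For this I would use the $\mf{sl}_2$-theory of $\alpha_i$-strings. Since $\alpha_i$ is a compact root, the whole $\alpha_i$-string through $\beta$ stays inside $\Phi(\mf n^+)$; writing $q_i\geq 0$ for the largest integer with $\beta+q_i\alpha_i\in\Phi$, the span $\bigoplus_n\mf g_{\beta+n\alpha_i}$ is an irreducible module for the triple $(X_i,H_i,Y_i)$ in which $X_\beta$ sits $q_i$ steps below the highest weight vector, so that $X_i^{k+1}X_\beta=0$ if and only if $q_i\leq k$. The condition for index $i$ is therefore $q_i\leq\gamma_\b m(H_i)$, and the key observation is that this is automatic as soon as $\gamma_\b m(H_i)\geq 1$: one has $q_i\leq 2$ always, and $q_i=2$ can occur only when $\alpha_i$ is a short root; but if $\alpha_i$ is short then, since each Harish-Chandra root $\gamma_k$ is long, the integers $\langle\gamma_k,\alpha_i^\vee\rangle$ are even, so $\gamma_\b m(H_i)$ is even and hence $\geq 2$ whenever it is positive. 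Consequently only the indices $i$ with $\gamma_\b m(H_i)=0$ impose a genuine restriction, namely $q_i=0$, i.e.\ $\beta+\alpha_i\notin\Phi$; and since $m_1(\alpha_i)=0$ this is the same as $\beta+\alpha_i\notin\Phi(\mf n^+)$. Collecting these conditions shows that the set of dominant $\nu$ with positive (hence unit) multiplicity is exactly $\Lambda_\b m(\mf n^+)$, which is the asserted decomposition.

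For the final assertion I would take $\beta:=\gamma_j$, so that $\gamma_\b m+\beta=\gamma_{\b m+e_j}$ and, as $\gamma_j\in\Phi(\mf n^+)$, membership of $\gamma_{\b m+e_j}$ in $\Lambda_\b m(\mf n^+)$ amounts to $\gamma_{\b m+e_j}\in\Lambda_c$ together with $\gamma_j+\alpha_i\notin\Phi(\mf n^+)$ for every compact simple $\alpha_i$ with $\gamma_\b m(H_i)=0$. I expect this to be the main obstacle, since it is a purely combinatorial statement about the interaction between the system $(\gamma_1,\dots,\gamma_r)$ of strongly orthogonal roots and the simple compact roots. The approach is to use the standard description (available uniformly via the Cayley-transform/restricted-root picture, or case by case from the classification) of which compact simple roots ``separate'' consecutive $\gamma_k$, and of the resulting formula expressing $\gamma_\b m(H_i)$ as a non-negative integral combination of the successive differences $m_k-m_{k+1}$. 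From this one reads off that when $m_{j-1}>m_j$ the weight $\gamma_{\b m+e_j}$ is $\Phi_c^+$-dominant and $\gamma_j+\alpha_i$ fails to be a root for each relevant $i$, whereas when $m_{j-1}=m_j$ there is a compact simple $\alpha_i$ with $\gamma_\b m(H_i)=0$ and $\gamma_j+\alpha_i\in\Phi(\mf n^+)$ (equivalently, $\gamma_{\b m+e_j}$ is no longer dominant), which excludes it.
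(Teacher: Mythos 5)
Your treatment of the decomposition itself is correct and follows essentially the paper's route: apply Proposition~\ref{prop:tensormultiplicity} with $E_\lambda=\Poly_{\b m}(\Vm)$, $E_\mu=\mf n^+$, use one-dimensionality of the root spaces $(\mf n^+)^\beta$ to get multiplicity at most one, and translate $X_i^{\gamma_{\b m}(H_i)+1}X_\beta=0$ into a statement about the $\alpha_i$-string through $\beta$. Where you diverge is the critical case $q_i=2$: the paper bounds the string by $a_i+b_i\leq 2$ (Lemma~15 of \cite{HC55}) and then uses dominance of $\nu=\gamma_{\b m}+\beta$ to force $\gamma_{\b m}(H_i)\geq 2$, whereas you argue that $q_i=2$ forces $\alpha_i$ short, and since every $\gamma_k$ is long, $\gamma_{\b m}(H_i)$ is even, hence $\geq 2$ once positive. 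That variant is valid, but it silently relies on two facts you should at least cite: that $\mf g$ has no $G_2$ factor (so $q_i\leq 2$ ``always'') and that the strongly orthogonal roots $\gamma_1,\dots,\gamma_r$ are all long; both are true and standard in the Hermitian symmetric setting, but they are doing real work in your argument.

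The genuine gap is the final assertion, $\gamma_{\b m+e_j}\in\Lambda_{\b m}(\mf n^+)\iff m_j<m_{j-1}$, which you explicitly leave as a plan resting on an unspecified ``standard description'' of $\gamma_{\b m}(H_i)$ in terms of the differences $m_k-m_{k+1}$, possibly case by case. As it stands this is not a proof, and it is also unnecessary: the paper settles it uniformly in three lines. First, $\gamma_{\b m}+\gamma_j\in\Lambda_c$ iff $\b m+e_j\in\NN^r_\geq$ iff $m_j<m_{j-1}$ (the dominance criterion for $\gamma_{\b m'}$ that you, like the paper, already use), which disposes of the ``only if'' direction since $\Lambda_{\b m}(\mf n^+)\subseteq\Lambda_c$ by definition. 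For the ``if'' direction, use Lemma~12 of \cite{HC56}: for $\beta=\gamma_j$ and a compact root $\alpha_i$, the roots $\gamma_j+\alpha_i$ and $\gamma_j-\alpha_i$ cannot both exist, so either $b_i=0$ or $a_i=0$; in the latter case $b_i=-\gamma_j(H_i)$, and dominance of $\gamma_{\b m}+\gamma_j$ gives $\gamma_{\b m}(H_i)\geq b_i$. In particular, if $\gamma_{\b m}(H_i)=0$ then $b_i=0$, i.e.\ $\gamma_j+\alpha_i\notin\Phi(\mf n^+)$, which is exactly the membership condition for $\Lambda_{\b m}(\mf n^+)$. Replacing your deferred combinatorial analysis by this observation closes the gap without any case-by-case work.
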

\end{samepage}
\begin{proof}
As already mentioned above, all highest weights occurring in the decomposition of $\Poly_\b m(\Vm)\otimes\mf n^+$ must be of the form $\gamma_\b m + \beta\in\Lambda_c$ for some $\beta\in\Phi(\mf n^+)$. Here we have to show that such a weight occurs as a highest weight of the tensor product, if and only if $\beta+\alpha_i\notin\Phi(\mf n^+)$ for all $i=2,\ldots,\ell$ with $\gamma_\b m(H_i)=0$, and that its multiplicity is one. We apply Proposition~\ref{prop:tensormultiplicity} to this setting, so $\lambda = \gamma_\b m$, $E_\mu =\mf n^+$, and $\nu = \gamma_\b m + \beta$. Recall that the weight spaces of $\mf n^+$ are precisely the root spaces of $\mf g$ corresponding to non-compact positive roots. Therefore, $\dim(\mf n^+)^\beta = 1$ for all $\beta\in\Phi(\mf n^+)$ and hence $\gamma_\b m+\beta$ occurs with multiplicity at most one. Now, Proposition~\ref{prop:tensormultiplicity} states that $\gamma_\b m+\beta\in\Phi_c$ occurs in the tensor product if and only if $X_i^{\gamma_\b m(H_i)+1}v=0$ for all $v\in(\mf n^+)^\beta$ and all $i=2,\ldots,\ell$. Since the action of $\mf k$ on $\mf n^+$ is the restriction of the adjoint action of $\mf k$ on $\mf g$, the vanishing of $X_i^{\gamma_\b m(H_i)+1}v$ is equivalent to the condition that $\beta + (\gamma_\b m(H_i)+1)\alpha_i$ is not a root of $\mf g$. For fixed $i$, consider the $\alpha_i$-chain through $\beta$ in the root system of $\mf g$, i.e., $\beta + k\alpha_i$ for $k=-a_i,\ldots,b_i$ with $a_i,b_i\in\NN$ maximal. Since $b_i=0$ if and only if $\beta + \alpha_i\notin\Phi(\mf n^+)$, it remains to show that
\[
	b_i<\gamma_\b m(H_i) + 1\iff b_i=0\text{ or }\gamma_\b m(H_i)\neq 0\,.
\]
The implication from left to right easily follows from the fact that $\gamma_\b m$ is dominant integral for $\Phi_c$, so $\gamma_\b m(H_i)\in\NN$. This also shows the converse implication for the case $b_i=0$. Now assume that $b_i>0$ and $\gamma_\b m(H_i)\neq 0$, i.e., $\gamma_\b m(H_i)>0$. For $b_i=1$, there is nothing to show. Since $\beta$ is a non-compact root and $\alpha_i$ is a compact root, \cite[Lemma~15]{HC55} implies that $a_i+b_i\leq 2$. Therefore, it remains to consider the case $b_i=2$ and $a_i=0$. Due to standard properties of root systems, we obtain $b_i=b_i-a_i = -\beta(H_i)$. Since $\gamma_\b m + \beta\in\Lambda_c$, it follows that $\gamma_\b m(H_i)+\beta(H_i)\in\NN$, so $b_i\leq\gamma_\b m(H_i)$, which yields $b_i<\gamma_\b m(H_i)+1$. This completes the proof of \eqref{eq:TangentBundleHeighestWeights}.\\
Finally, for $\beta = \gamma_j$, we note that $\gamma_\b m+\beta\in\Phi_c$ if and only if $\b m+e_j\in\NN^r_\geq$, i.e., if and only if $m_j<m_{j-1}$. Moreover, since $\gamma_j\pm\alpha_i$ cannot both be roots (see \cite[Lemma~12]{HC56}), we either have $b_i=0$ or $a_i=0$. In both cases, the arguments above show that $b_i<\gamma_\b m(H_i)+1$, hence $\gamma_\b m + \gamma_j\in\Lambda_\b m(\mf n^+)$. This yields the last statement.
\end{proof}

\begin{remark}
	We note that if the root system $\Phi$ is simply laced, then the second condition in 
	\eqref{eq:TangentBundleHeighestWeights} is always satisfied and it just remains the condition 
	that the highest weight is of the form $\gamma_\b m+\beta$ for some $\beta\in\Phi(\mf n^+)$, 
	which was mentioned above as a necessary condition for highest weights of the tensor product, 
	i.e.,
	\[
		\Lambda_\b m(\mf n^+)
			= \Set{\gamma_\b m + \beta\in\Lambda_c}{\beta\in\Phi(\mf n^+)}.
	\]
	Indeed, since at least two of the vectors $\beta$, $\beta\pm\alpha_i$ have different lengths, 
	$\beta\pm\alpha_i$ cannot both be roots of $\mf g$. Therefore, if
	$\beta+\alpha_i\in\Phi(\mf n^+)$ then $\beta-\alpha_i\notin\Phi(\mf n^+)$ and hence $a_i=0$ 
	(using the notation of the proof above). Since $b_i=b_i-a_i=-\beta(H_i)$ it now follows that 
	$b_i<\gamma_\b m(H_i) + 1$, hence $\gamma_\b m(H_i)>0$. The classification table in the appendix 
	shows which of the simple Hermitian symmetric spaces have simply laced root systems.
\end{remark}

\begin{theorem}\label{thm:tangentdecomp}
	The $U$-type decomposition of $L^2(X,T^{(1,0)})$ is given by
	\[
		L^2(X,T^{(1,0)})
			= \widehat\bigoplus_{\lambda\in\Lambda(\mf n^+)}\; m_\lambda\cdot V_\lambda
	\]
	with $\Lambda(\mf n^+) := \bigcup_{\b m\in\NN^r_\geq}\Lambda_\b m(\mf n^+)$, and
	for $\lambda\in\Lambda(\mf n^+)$,
	\begin{align*}
		m_\lambda = \begin{cases}
			\#\set{i\in\{1,\ldots,r\}}{m_i>m_{i+1}} 
				& ,\ \lambda = \gamma_\b m\text{ with }\b m\in\NN^r_\geq,\\
			1 & \text{, else.}
		\end{cases}
	\end{align*}
\end{theorem}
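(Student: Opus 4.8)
The plan is to deduce the theorem from Corollary~\ref{cor:weightdescription} and Proposition~\ref{prop:tensordecomp} and then to carry out a purely combinatorial multiplicity count in the root system. Here $E=\mf n^+$ and $T^{(1,0)}$ admits non-trivial holomorphic sections (as recalled above), so Corollary~\ref{cor:weightdescription} identifies $m_\lambda$ with the multiplicity of $E_\lambda$ in $\bigoplus_{\b m\in\NN^r_\geq}\Poly_\b m(\mf n^-)\otimes\mf n^+$, while Proposition~\ref{prop:tensordecomp} exhibits each summand $\Poly_\b m(\mf n^-)\otimes\mf n^+$ as the multiplicity-free sum $\bigoplus_{\mu\in\Lambda_\b m(\mf n^+)}E_\mu$. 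Hence $\Lambda(\mf n^+)=\bigcup_\b m\Lambda_\b m(\mf n^+)$ is exactly the set of $\lambda$ with $m_\lambda\geq 1$, and
\[
	m_\lambda=\#\Set{\b m\in\NN^r_\geq}{\lambda\in\Lambda_\b m(\mf n^+)}.
\]
So everything reduces to counting, for fixed $\lambda\in\Lambda(\mf n^+)$, the $\b m$ for which $\lambda=\gamma_\b m+\beta$ with $\beta\in\Phi(\mf n^+)$ satisfying the admissibility condition of \eqref{eq:TangentBundleHeighestWeights}.

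The one genuine input is an arithmetic statement about the strongly orthogonal system $\gamma_1,\dots,\gamma_r$: the only roots of $\mf g$ lying in $\ZZ\gamma_1+\dots+\ZZ\gamma_r$ are $\pm\gamma_1,\dots,\pm\gamma_r$. I would prove it by noting that strong orthogonality gives $\gamma_i(H_{\gamma_j})=0$ for $i\neq j$, so a root $\beta=\sum_ic_i\gamma_i$ satisfies $\beta(H_{\gamma_j})=2c_j$; since $\beta(H_{\gamma_j})\in\{0,\pm1,\pm2\}$ (no factor of type $G_2$ occurs), this forces $c_j\in\{0,\pm1\}$, and $c_j=\pm1$ yields $\beta(H_{\gamma_j})=\pm2$, hence $\beta=\pm\gamma_j$ because each $\gamma_j$ is a \emph{long} root of $\mf g$. (The highest root $\gamma_1$ is long, and a glance at the classification table settles the remaining $\gamma_j$: the only non-simply-laced cases are types $B_r$ and $C_r$, where the $\gamma_i$ are $e_1\pm e_2$, respectively $2e_i$, all long.) Along the way I will also record that for $\beta\in\Phi(\mf n^+)$ one has $\beta(H_{\gamma_j})\in\{0,1,2\}$, with the value $2$ attained only for $\beta=\gamma_j$: the lower bound because otherwise $\beta+\gamma_j$ would be a root with $m_1$-coefficient $2$, which is excluded in the Hermitian symmetric case, and the last assertion again because $\gamma_j$ is long.

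Granting this, the count closes in two cases. If $\lambda=\gamma_\b m$ — and here $\b m$ is uniquely determined, the $\gamma_i$ being linearly independent — then $\lambda\in\Lambda_{\b m'}(\mf n^+)$ forces $\gamma_\b m-\gamma_{\b m'}=\beta\in\Phi(\mf n^+)$, hence $\beta=\gamma_j$ and $\b m'=\b m-e_j$ for some $j$; conversely $\b m-e_j\in\NN^r_\geq$ exactly when $m_j>m_{j+1}$, and then the admissibility of $\beta=\gamma_j$ for $\b m-e_j$ is automatic, being the ``in particular'' clause of Proposition~\ref{prop:tensordecomp} (the inequality $m_j-1<m_{j-1}$ always holds). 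This gives $m_\lambda=\#\set{j\in\{1,\dots,r\}}{m_j>m_{j+1}}$. If on the other hand $\lambda$ is not of the form $\gamma_\b m$ and nevertheless $\lambda\in\Lambda_\b m(\mf n^+)\cap\Lambda_{\b m'}(\mf n^+)$ for some $\b m\neq\b m'$, write $\lambda=\gamma_\b m+\beta=\gamma_{\b m'}+\beta'$ with $\beta,\beta'\in\Phi(\mf n^+)$, choose $j$ with $m_j\neq m'_j$, and pair $\beta-\beta'=\gamma_{\b m'}-\gamma_\b m$ with $H_{\gamma_j}$: the values $\beta(H_{\gamma_j}),\beta'(H_{\gamma_j})\in\{0,1,2\}$ then differ by exactly $\pm2$, so one of $\beta,\beta'$ equals $\gamma_j$; by symmetry say $\beta=\gamma_j$, whence $\lambda=\gamma_{\b m+e_j}$, and since $\lambda\in\Lambda_\b m(\mf n^+)$ the ``in particular'' clause forces $m_j<m_{j-1}$, so $\b m+e_j\in\NN^r_\geq$ — contradicting that $\lambda$ is not a $\gamma_{\b m''}$. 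Thus such a $\lambda$ lies in exactly one $\Lambda_\b m(\mf n^+)$, so $m_\lambda=1$, which is the remaining case of the formula.

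The main obstacle is the arithmetic lemma of the second paragraph, and really only the part of it asserting that all $\gamma_j$ are long roots of $\mf g$; I expect to dispatch this by the short case inspection indicated above (or by quoting the relevant property of Harish-Chandra's strongly orthogonal roots), after which both cases are routine bookkeeping with the conditions in \eqref{eq:TangentBundleHeighestWeights}.
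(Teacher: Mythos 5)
Your argument is correct, and its overall architecture is exactly that of the paper: reduce via Corollary~\ref{cor:weightdescription} and Proposition~\ref{prop:tensordecomp} to counting, for fixed $\lambda$, the indices $\b m\in\NN^r_\geq$ with $\lambda\in\Lambda_\b m(\mf n^+)$, and then analyse collisions $\gamma_\b m+\beta=\gamma_{\b m'}+\beta'$, closing the case $\lambda=\gamma_{\b m}$ with the ``in particular'' clause of Proposition~\ref{prop:tensordecomp}. The one place where you genuinely diverge is the collision lemma. The paper settles it by quoting Moore's restricted-root theorem \cite{Mo64}: the restriction of a non-compact positive root to $\mf h'=\sum_i\CC H_{\gamma_i}$ is $\gamma_i$, $\tfrac12(\gamma_i+\gamma_j)$ or $\tfrac12\gamma_i$, and only $\gamma_i$ itself restricts to $\gamma_i$. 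You instead prove an elementary substitute by pairing against the coroots $H_{\gamma_j}$: strong orthogonality gives $\gamma_i(H_{\gamma_j})=2\delta_{ij}$, the Hermitian condition $|m_1(\beta)|\leq 1$ gives $\beta(H_{\gamma_j})\geq 0$ for $\beta\in\Phi(\mf n^+)$, the absence of $G_2$ bounds it by $2$, and the value $2$ pins down $\beta=\gamma_j$ because the $\gamma_j$ are long roots (checked by inspecting the $B$ and $C$ cases, the only non-simply-laced ones in the list). This buys you a self-contained proof that avoids the external citation and, as a by-product, the clean statement that the only roots in $\ZZ\gamma_1+\cdots+\ZZ\gamma_r$ are $\pm\gamma_i$; the cost is the longness fact, which you must either verify case by case or cite, whereas Moore's theorem is case-free and also records the finer half-integral restriction data that your argument never needs. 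Both routes are sound, and your bookkeeping in the two cases ($\lambda=\gamma_\b m$ versus $\lambda$ not of this form) matches the multiplicity formula of the theorem.
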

\begin{proof}
It follows from Corollary~\ref{cor:weightdescription} and Proposition~\ref{prop:tensordecomp} that $\Lambda(\mf n^+)$ is the set of highest weights having positive multiplicity in the decomposition of $L^2(X,T^{(1,0)})$. It therefore remains to prove the explicit formula for $m_\lambda$, $\lambda\in\Lambda(\mf n^+)$. Since each $\Poly_\b m(\Vm)\otimes\Vp$ decomposes multiplicity free, higher multiplicities can only occur in combination with differing indices $\b m,\b m'\in\NN^r_\geq$. Assume $\gamma_\b m + \beta = \gamma_{\b m'} + \beta'$ for some weights $\beta,\beta'\in\Phi(\mf n^+)$. Consider the subspace $\mf h':=\sum_{i=1}^r \CC H_{\gamma_i}$ of the Cartan subalgebra $\mf h$. It is well-known that the restriction of a non-compact positive root to $\mf h'$ is of the form $\gamma_i$, $\tfrac{1}{2}(\gamma_i+\gamma_j)$, or $\tfrac{1}{2}\gamma_i$ with $1\leq i<j\leq r$ (each restricted to $\mf h'$), and the only non-compact positive root with restriction $\gamma_i$ is $\gamma_i$ itself (see \cite{Mo64}). Applying this result to the identity $\gamma_\b m + \beta = \gamma_{\b m'} +\beta'$ with the assumption $\b m\neq\b m'$ yields that $\beta = \gamma_i$ and $\beta' =\gamma_j$ for some $i,j$ such that $\b m + e_i =\b m'+e_j$. Therefore, the last statement of Proposition~\ref{prop:tensordecomp} implies that the multiplicity of $\lambda = \gamma_{\widetilde{\b m}}$ is the number of indices $i\in\{1,\ldots, r\}$ such that $\widetilde{\b m}-e_i$ is an element of $\NN^r_\geq$.
\end{proof}


\section{Generalized Taylor expansion formula}\label{sec:taylor}
In this section, we solve the problem posed in Remark~\ref{rmk:coeffQuestion} for the case of  Hermitian symmetric spaces. More precisely, we consider local nearly holomorphic sections on $\Vp\subseteq X$ and determine their holomorphic coefficients in a non-recursive way.

On $\Vp$, let $\Psi$ be the Kähler potential defined in Lemma~\ref{lem:HermMetricAndKaehlerPot} with corresponding $q$-map
\[
	q: \Vp\to\Vm,\ z\mapsto \bar z^{-z}.
\]
Since the vector bundle $\sE$ is $G$-homogeneous, we may identify smooth local sections $f\in C^\infty(\Vp,\sE_\Vp)$ via \eqref{eq:LocalVsInducedPicture} with smooth maps $f:\Vp\to E$. Due to Proposition~\ref{prop:LocalDescription} a local nearly holomorphic section $f\in\sN(\Vp,\sE_\Vp)$ of degree $m:=\deg f$ has a unique expansion into
\begin{align}\label{eq:nhfdecomp}
	f(z) = \sum_{|\b i|\leq m} f_\b i(z)\cdot q(z)^\b i
\end{align}
with holomorphic coefficients $f_\b i\in\sO(\Vp,E)$. The goal is to determine a non-recursive formula for the coefficients $f_\b i$. For this, the crucial observation is the compatibility of the chosen $q$-map with certain differential operators. For the following, we fix some basis $(c_1,\ldots, c_n)$ of $\mf n^+$, let $(\tilde c_1,\ldots,\tilde c_n)$ denote the corresponding dual basis of $(\mf n^+)^+$, and identify $(\mf n^+)^*$ with $\mf n^-$ via the isomorphism \eqref{eq:KillingIsomorphism}.

On the one hand, consider the invariant Cauchy--Riemann operator $\iCR$, which maps local smooth functions $f\in C^\infty(\Vp,E)$ to functions $\iCR f\in C^\infty(\Vp,E\otimes \Vp)$. Let $\iCR_\ell:C^\infty(\Vp,E)\to C^\infty(\Vp,E)$ ($1\leq\ell\leq n$) be defined by
\[
	\iCR_\ell f=\left(\Id_E\otimes \tilde c_\ell\right)\iCR f\,,
\]
which are the operators already used in the proof of Proposition~\ref{prop:LocalDescription}.

On the other hand, we define $\delta_\ell:C^\infty(\Vp,E)\to C^\infty(\Vp,E)$ ($1\leq\ell\leq n$) by
\[
	\delta_\ell:=\del_{c_\ell} + \delbar_{Q_{\bar z}c_\ell}.
\]
On compactly supported local sections, this operator coincides (up to sign) with the action of $(c_\ell,0,0)\in\mf u_\CC=\mf g$ in the (complexified) representation $d\pi_\CC$, see Lemma~\ref{lem:localLiealgebraAction}. 

For the multi-index $\b i = (i_1,\ldots,i_n)\in\NN^n$, we also define the operators
\begin{align}\label{eq:multioperator}
	\iCR^\b i := \prod_{\ell=1}^n \iCR_\ell^{i_\ell}\,,\quad
	\delta^\b i:=\prod_{\ell=1}^n \delta_\ell^{i_\ell}\,.
\end{align}
Before giving the formula for the coefficients of nearly holomorphic sections by means of these operators, we note that they mutually commute (in particular, we may arrange the single operators in \eqref{eq:multioperator} in arbitrary order).

\begin{lemma}\label{lem:commutators}
	For $\b i,\b j\in\NN^r$,
	\[
		[\iCR^\b i,\iCR^\b j] = 0,\quad
		[\delta^\b i,\delta^\b j] = 0,\quad
		[\iCR^\b i,\delta^\b j] = 0.
	\]
\end{lemma}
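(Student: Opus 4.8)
The plan is to derive the lemma from the three basic relations $[\iCR_\ell,\iCR_k]=0$, $[\delta_\ell,\delta_k]=0$ and $[\iCR_\ell,\delta_k]=0$ for $1\le\ell,k\le n$; granting these, $\{\iCR_1,\dots,\iCR_n,\delta_1,\dots,\delta_n\}$ is a family of pairwise commuting operators, so any two products of its members commute and the stated identities follow at once. All the operators in sight are differential operators on the complex manifold $\mf n^+$ with smooth coefficients, and such an operator that annihilates $C^\infty_c(\mf n^+)$ vanishes identically; since $\mf n^+$ is open in $X$, compactly supported local sections extend by zero to smooth global sections, so in the arguments below it suffices to verify the relations on such sections, where the $G$-action is available. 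The relation $[\iCR_\ell,\iCR_k]=0$ is essentially known: over the big cell $\mf n^+\subseteq X$ the bundle $\sE_{\mf n^+}$ is holomorphically trivial (via $z\mapsto\exp(z)$), so $\iCR_\ell$ acts componentwise on $E$-valued functions, and in the scalar case this is the symmetry of the iterated Cauchy--Riemann operator, Proposition~\ref{prop:BasisPropsNearHol}(b) (cf.\ the proof of Proposition~\ref{prop:LocalDescription} and \cite[Lemma~2.0]{S86}).

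For $[\delta_\ell,\delta_k]=0$ I would use the identification, from Lemma~\ref{lem:localLiealgebraAction}, of $\delta_\ell$ with $-d\pi_\CC(c_\ell,0,0)$: on compactly supported sections (extended to smooth global sections) $\delta_\ell$ is, up to sign, the action of $(c_\ell,0,0)\in\mf n^+\subseteq\mf g$ in the complexified representation $d\pi_\CC$. Since $\mf n^+$ is abelian we have $[(c_\ell,0,0),(c_k,0,0)]=0$ in $\mf g$, and because $d\pi_\CC$ is a Lie algebra representation on the space of smooth vectors this gives $[\delta_\ell,\delta_k]=d\pi_\CC\big([(c_\ell,0,0),(c_k,0,0)]\big)=0$ on compactly supported sections, hence everywhere. (Alternatively one checks directly that the vector fields $\del_{c_\ell}$ and $z\mapsto Q_{\bar z}c_\ell$ have mutually vanishing brackets, the only nontrivial case $[\delbar_{Q_{\bar z}c_\ell},\delbar_{Q_{\bar z}c_k}]$ collapsing via the outer symmetry of the Jordan triple product and a Jordan pair identity.)

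The one substantial relation is $[\iCR_\ell,\delta_k]=0$, and here I would exploit the invariance of $\iCR$ under the isometric $U$-action, Proposition~\ref{prop:BasisPropsNearHol}(a). Differentiating it along one-parameter subgroups of $U$ yields the intertwining identity $\iCR\circ d\pi_\CC(Y)=d\nu_\CC(Y)\circ\iCR$ for $Y\in\mf u$, where $d\nu_\CC$ is the complexified $\mf u$-action on sections of $\sE\otimes T^{(1,0)}=G\times_P(E\otimes\mf n^+)$ (the Leibniz combination of the $\sE$- and $T^{(1,0)}$-actions). As $\iCR$ is $\CC$-linear and $d\pi_\CC,d\nu_\CC$ are defined on all of $\mf g=\mf u_\CC$ by complex-linear extension, this identity extends to all $Y\in\mf g$, in particular to $Y=(c_k,0,0)\in\mf n^+$. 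By Lemma~\ref{lem:localLiealgebraAction} — whose derivation uses no irreducibility and applies to $\sE\otimes T^{(1,0)}$ verbatim — the element $(c_k,0,0)$ contributes no $\rho$-term, so in the local picture $d\pi_\CC(c_k,0,0)=-\delta_k$ and, $\delta_k$ being a derivation, $d\nu_\CC(c_k,0,0)=-\delta_k\otimes\Id_{\mf n^+}$ (that is, $-\delta_k$ applied componentwise to $E\otimes\mf n^+$-valued functions). Substituting gives $\iCR\circ\delta_k=(\delta_k\otimes\Id_{\mf n^+})\circ\iCR$ on compactly supported sections, and composing with $\Id_E\otimes\tilde c_\ell$ — which commutes with the componentwise operator $\delta_k$ — yields $\iCR_\ell\delta_k=\delta_k\iCR_\ell$ there, hence as an identity of differential operators. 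I expect the main obstacle to be precisely this step: legitimizing the passage from the invariance of $\iCR$, valid a priori only for $Y\in\mf u$, to the statement for $Y$ in the abelian subalgebra $\mf n^+\subseteq\mf g$, and correctly identifying the induced $\mf g$-action on $\sE\otimes T^{(1,0)}$. After that the commutation is a short contraction; a direct computation of $[\iCR_\ell,\delta_k]$ from $\iCR_\ell=\sum_j h^{\bar j\ell}\delbar_j$ (Lemma~\ref{lem:HermMetricAndKaehlerPot}) and $\delta_k=\del_{c_k}+\delbar_{Q_{\bar z}c_k}$ is possible but needs considerably more Jordan-theoretic bookkeeping.
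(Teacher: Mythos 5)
Your proposal is correct and follows essentially the same route as the paper: reduction to single indices, the scalar case via \cite[Lemma~2.0]{S86}, $[\delta_k,\delta_\ell]=0$ from $\delta_\ell=-d\pi_\CC(c_\ell,0,0)$ and the abelianness of $\mf n^+$, and the mixed commutator by differentiating the $U$-invariance of $\iCR$ and extending $\CC$-linearly to $Y=(c_k,0,0)$. The only cosmetic difference is that you identify the induced action on $\sE\otimes T^{(1,0)}$ through the homogeneous-bundle picture, whereas the paper writes out the Leibniz term and uses $[\tilde Y,c_j]=[c_\ell,c_j]=0$ for the constant frame --- the same observation in different clothing.
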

\begin{proof}
It suffices to prove these relations for single indices, so we prove 
\[
	[\iCR_k,\iCR_\ell]=[\delta_k,\delta_\ell] = [\iCR_k,\delta_\ell] = 0
\]
for all $1\leq k,\ell,\leq n$. The first commutator vanishes according to \cite[Lemma~2.0]{S86}, cf.\ also Proposition~\ref{prop:BasisPropsNearHol}. To evaluate the commutators involving $\delta_\ell$, we note that it suffices to consider compactly supported functions $f\in C^\infty_c(\mf n^+,E)$. Therefore, $\delta_\ell = -d\pi_\CC(c_\ell,0,0)$ as already noted above, and since $\mf n^+\subseteq\mf u_\CC$ is abelian, it follows that $[\delta_k,\delta_\ell]=0$. The vanishing of the last commutator is a consequence of the $U$-invariance of the Cauchy--Riemann operator $\iCR$, 
\begin{align}\label{eq:Uinvariance}
	\iCR(\pi(u) f) = (\pi(u)\otimes du^*)\iCR f
\end{align}
for all $u\in U$, where $du^*$ denotes the induced action on vector fields on $X$, see Proposition~\ref{prop:BasisPropsNearHol}. Setting $u := \exp(tY)$ for $Y\in\mf u$, $t\in\RR$, and taking the derivative at $t=0$, the left hand side of \eqref{eq:Uinvariance} becomes $\iCR(d\pi(Y)f)$. To evaluate the right hand side, we note that locally $\iCR f(z) = \sum_{j=1}^n\iCR_jf(z)\otimes c_j$, which yields
\begin{align}\label{eq:invariancecondition}
	\iCR(d\pi(Y)f)
		= \sum_{j=1}^n \big(d\pi(Y)\iCR_jf\big)\otimes c_j + \iCR_jf\otimes[\tilde Y,c_j]\,,
\end{align}
where $\tilde Y$ is the holomorphic vector field corresponding to $Y\in\mf u$ and $[\tilde Y,c_j]$ denotes the commutator of $\tilde Y$ with the constant vector field $c_j$. Since \eqref{eq:invariancecondition} remains valid for the complexified representation $d\pi_\CC$, we may set $Y=(c_\ell,0,0)$ and obtain
\[
	\iCR(\delta_\ell f) = \sum_{j=1}^n (\delta_\ell\iCR_jf)\otimes c_j\,,
\]
since $[c_\ell,c_j] = 0$. Finally applying $(\Id_E\otimes\tilde c_k)$, this yields $\iCR_k(\delta_\ell f) = \delta_\ell(\iCR_k f)$, i.e., $[\iCR_k,\delta_\ell] = 0$.
\end{proof}

\begin{proposition}[Generalized Taylor expansion formula]\label{prop:TaylorSeries}
	For any $f\in\sN(\Vp,\sE_\Vp)$, the coefficient $f_\b i\in\sO(\mf n^+,E)$ of the expansion 
	\[
		f(z) = \sum_{|\b i|\leq \deg f} f_\b i(z)\,q^{\b i}(z)
	\]
	is given by
	\[
		f_\b i(z) = \sum_{\b j\in\NN^n} c_{\b i \b j}\,z^\b j\quad\text{with}\quad
		c_{\b i\b j} := \tfrac{1}{\b i!\b j!}\,\delta^\b j\iCR^\b i f(0)\,.
	\]
\end{proposition}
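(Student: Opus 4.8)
The plan is to combine two facts established earlier: (i) the algebraic identity $(\iCR^\b i q^\b j)(z) = \b i!\cdot\delta_{\b i \b j}$ whenever $|\b i|\geq|\b j|$ (used in the proof of Proposition~\ref{prop:LocalDescription}), which lets us extract the coefficient sections $f_\b i$ from $f$ by applying $\iCR^\b i$; and (ii) the commutation relations of Lemma~\ref{lem:commutators}, together with the action of the $\delta_\ell$ on the $q$-map from \eqref{eq:qvanishing}, which will let us pass the $\delta^\b j$ through the product expansion and evaluate at $z=0$. The key preliminary observation is that $\delta_\ell q(z) = 0$ for all $\ell$: indeed $\delta_\ell q = (\partial_{c_\ell}+\bar\partial_{Q_{\bar z}c_\ell})q$, which vanishes by \eqref{eq:qvanishing}. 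Hence $\delta_\ell$ acts as a derivation that ``sees only the holomorphic coefficients'': writing $f = \sum_\b k f_\b k\, q^\b k$ one gets, since $\delta_\ell$ agrees with $\partial_{c_\ell}$ on holomorphic functions, $\delta^\b j f = \sum_\b k (\partial^\b j f_\b k)\, q^\b k$, where $\partial^\b j := \prod_\ell \partial_{c_\ell}^{j_\ell}$ is the holomorphic constant-coefficient differential operator in the basis $(c_\ell)$.

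First I would fix $f\in\sN(\Vp,\sE_\Vp)$ with the expansion $f = \sum_{|\b k|\leq\deg f} f_\b k\, q^\b k$ guaranteed by Proposition~\ref{prop:LocalDescription}, and apply $\iCR^\b i$. Using $[\iCR^\b i,\iCR^\b j]=0$ and that $\iCR$ kills holomorphic sections (so the coefficient sections $f_\b k$ pass through), the Leibniz-type computation together with $(\iCR^\b i q^\b k)(z)=\b i!\,\delta_{\b i\b k}$ for $|\b i|\geq|\b k|$ and $\iCR^\b i q^\b k = 0$ for $|\b i|>|\b k|$ when $\b i\not\leq\b k$ gives
\begin{align*}
	\iCR^\b i f(z) = \sum_{\b k\geq\b i}\tfrac{\b k!}{(\b k-\b i)!}\, f_\b k(z)\, q(z)^{\b k-\b i}\,,
\end{align*}
which is again nearly holomorphic with lowest-degree coefficient $\b i!\, f_\b i$. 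Next I would apply $\delta^\b j$; by Lemma~\ref{lem:commutators} this commutes with $\iCR^\b i$, and by the derivation property above together with $\delta_\ell q=0$,
\begin{align*}
	\delta^\b j\iCR^\b i f(z) = \sum_{\b k\geq\b i}\tfrac{\b k!}{(\b k-\b i)!}\,(\partial^\b j f_\b k)(z)\, q(z)^{\b k-\b i}\,.
\end{align*}
Finally I would evaluate at $z=0$, noting $q(0)=\bar 0^{-0}=0$, so every term with $\b k\neq\b i$ carries a positive power of $q$ and drops out; only $\b k=\b i$ survives, giving $\delta^\b j\iCR^\b i f(0) = \b i!\,(\partial^\b j f_\b i)(0)$. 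Since $f_\b i$ is holomorphic, its Taylor expansion in the coordinates dual to $(c_\ell)$ reads $f_\b i(z) = \sum_\b j \tfrac{1}{\b j!}(\partial^\b j f_\b i)(0)\, z^\b j$, and substituting $(\partial^\b j f_\b i)(0) = \tfrac{1}{\b i!}\,\delta^\b j\iCR^\b i f(0)$ yields the claimed formula $c_{\b i\b j} = \tfrac{1}{\b i!\b j!}\,\delta^\b j\iCR^\b i f(0)$.

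The main obstacle, and the step I would write most carefully, is the bookkeeping in the first display: one must justify that $\iCR^\b i$ really produces exactly $\sum_{\b k\geq\b i}\tfrac{\b k!}{(\b k-\b i)!} f_\b k\, q^{\b k-\b i}$. This requires that each $\iCR_\ell$ is a derivation with respect to the product $f_\b k\, q^\b k$ in the relevant sense --- it is, because $\iCR_\ell$ annihilates holomorphic functions and satisfies $\iCR_\ell q_m = \delta_{\ell m}$ --- and that the iteration is order-independent, which is Lemma~\ref{lem:commutators}; one also needs $\iCR^\b i q^\b k = 0$ unless $\b i\leq\b k$, which follows from $\iCR^\b i q^\b k = \tfrac{\b k!}{(\b k-\b i)!} q^{\b k-\b i}$ when $\b i\leq\b k$ and $0$ otherwise, exactly as recorded in the proof of Proposition~\ref{prop:LocalDescription}. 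A secondary point worth a sentence is that $\delta_\ell$ restricted to holomorphic maps coincides with the holomorphic directional derivative $\partial_{c_\ell}$ (the anti-holomorphic term $\bar\partial_{Q_{\bar z}c_\ell}$ kills holomorphic functions), so that $\delta^\b j$ applied to a holomorphic coefficient is literally a constant-coefficient holomorphic differential operator and the usual one-variable Taylor formula applies coordinatewise.
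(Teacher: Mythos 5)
Your proposal is correct and follows essentially the same route as the paper: apply $\iCR^{\b i}$ and then $\delta^{\b j}$ to the expansion of Proposition~\ref{prop:LocalDescription}, using $\delta^{\b j}(f_{\b k}\,q^{\b k})=(\del^{\b j}f_{\b k})\,q^{\b k}$ from \eqref{eq:qvanishing}, the formula for $\iCR^{\b i}q^{\b k}$, and $q(0)=0$, then read off the Taylor coefficients of the holomorphic $f_{\b i}$. The paper's proof is just a terser statement of these same three facts, with the commutativity input of Lemma~\ref{lem:commutators} noted beforehand exactly as you use it.
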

\begin{proof}
Since the coefficient $f_\b i$ is a holomorphic map on $\mf n^+$, it is clear that it admits a Taylor expansion of the form $f_\b i(z) = \sum_{\b j\in\NN^n} c_{\b i \b j}\,z^\b j$, where the coefficient $c_{\b i\b j}\in\CC$ is given by $c_{\b i\b j} = \frac{1}{\b j}\del^{\b j}f_\b i(0)$. Therefore, it suffices to note the following three facts, (i) the $q$-map vanishes at $0$, (ii) due to \eqref{eq:qvanishing} the operators $\delta_\b i$ satisfy
\[
	\delta^\b j\big(f_\b i\cdot q^\b i\big) = \big(\del^\b jf_\b i\big)\cdot q^\b i\,,
\]
and (iii) the definition of $\iCR$ and the $q$-map implies that
\begin{align*}
	\iCR^\b j\big(f_\b i\cdot q^\b i\big)
		= f_\b i\cdot\iCR^\b j q^\b i
		= f_\b i\cdot\begin{cases}
			\tfrac{\b i!}{(\b i-\b j)!}\,q^{\b i-\b j} & \text{if $\b j\leq\b i$,}\\
			0	& \text{otherwise,}
		\end{cases}
\end{align*}
see also the proof of Proposition~\ref{prop:LocalDescription}.
\end{proof}

As an application of Proposition~\ref{prop:TaylorSeries}, we finally obtain a more explicit description of the isomorphism $\varphi_\lambda$ defined in Theorem~\ref{thm:highestweights}.

\begin{corollary}\label{cor:explicitisom}
	Assume $\sO(X,\sE)\neq\{0\}$, and fix $\lambda\in\Lambda$. The isomorphism
	\begin{align*}
		\varphi_\lambda:\Hom_U(V_\lambda,L^2(X,\sE))\to\Hom_K(V_\lambda^{\mf n^+},\Poly(\Vm,E)),\
		T\mapsto \iota_{\sN}\circ T|_{V_\lambda^{\mf n^+}}
	\end{align*}
	is given by
	\[
		\varphi_\lambda(T): v\mapsto
			f^v(w):=\sum_{\b i\in\NN^n}\tfrac{1}{\b i!}\,\iCR^{\b i}(Tv)\at{\mf n^+}(0)\,w^\b i\,.
	\]
\end{corollary}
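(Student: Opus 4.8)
The plan is to unwind the definition of $\varphi_\lambda$ and combine it with the generalized Taylor expansion formula of Proposition~\ref{prop:TaylorSeries}. Fix $T\in\Hom_U(V_\lambda,L^2(X,\sE))$ and $v\in V_\lambda^{\mf n^+}$, and put $f:=Tv$, regarded via \eqref{eq:LocalVsInducedPicture} as a smooth map $\mf n^+\to E$. By Proposition~\ref{prop:NearlyHolomorphicAndUFinite} the section $f$ is nearly holomorphic, so by Proposition~\ref{prop:LocalDescription} it has a unique expansion $f(z)=\sum_{|\b i|\leq\deg f}f_\b i(z)\,q(z)^\b i$ with $f_\b i\in\sO(\mf n^+,E)$ and $q(z)=\bar z^{-z}$. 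As shown in the proof of Theorem~\ref{thm:highestweights}, for $v\in V_\lambda^{\mf n^+}$ the polynomial $\varphi_\lambda(T)(v)=\iota_\sN(f)$ lies in $\sP(\mf n^-,E)$; equivalently, all the coefficients $f_\b i$ are \emph{constant}, and $\iota_\sN(f)$ is the polynomial $w\mapsto\sum_{\b i}f_\b i\,w^\b i$. It therefore suffices to show that $f_\b i=\tfrac{1}{\b i!}\,\iCR^\b i f(0)$ for every $\b i\in\NN^n$.

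This is exactly where Proposition~\ref{prop:TaylorSeries} applies. It gives $f_\b i(z)=\sum_{\b j\in\NN^n}c_{\b i\b j}z^\b j$ with $c_{\b i\b j}=\tfrac{1}{\b i!\,\b j!}\,\delta^\b j\iCR^\b i f(0)$. Since $f_\b i$ is constant, $c_{\b i\b j}=0$ for $\b j\neq\b 0$, and its constant value is $c_{\b i\b 0}=\tfrac{1}{\b i!}\,\iCR^\b i f(0)$ (using $\delta^{\b 0}=\Id$, $\b 0!=1$). One may also see this directly without invoking the full Taylor formula: linearity of $\iCR$ together with the identity $\iCR^\b i q^\b j=\tfrac{\b j!}{(\b j-\b i)!}\,q^{\b j-\b i}$ for $\b i\leq\b j$ and $0$ otherwise (recalled in the proof of Proposition~\ref{prop:LocalDescription}) gives, for constant $f_\b j$, $\iCR^\b i f(z)=\sum_{\b j\geq\b i}f_\b j\,\tfrac{\b j!}{(\b j-\b i)!}\,q(z)^{\b j-\b i}$; evaluating at $z=0$, where $q(0)=0$, leaves only the term $\b j=\b i$, hence $\iCR^\b i f(0)=\b i!\,f_\b i$.

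Putting the two computations together,
\[
	\varphi_\lambda(T)(v)(w)=\iota_\sN(Tv)(w)=\sum_{\b i\in\NN^n}f_\b i\,w^\b i
		=\sum_{\b i\in\NN^n}\tfrac{1}{\b i!}\,\iCR^\b i(Tv)\at{\mf n^+}(0)\,w^\b i=f^v(w),
\]
which is the asserted formula. I do not anticipate a real obstacle; the only point needing care is that the displayed series extracts precisely the coefficients $f_\b i$ and no higher-order Taylor contributions, which is guaranteed by the constancy of the $f_\b i$ — equivalently, by $\delta^\b j\iCR^\b i f(0)=0$ for all $\b j\neq\b 0$ — established through the $\mf n^+$-invariance in Theorem~\ref{thm:highestweights}.
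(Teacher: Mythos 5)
Your proof is correct and follows exactly the route the paper intends: the corollary is stated as an immediate application of Proposition~\ref{prop:TaylorSeries}, combined with the observation from the proof of Theorem~\ref{thm:highestweights} that the coefficients $f_{\b i}$ of $Tv$ for $v\in V_\lambda^{\mf n^+}$ are constant, which collapses the Taylor expansion to $f_{\b i}=\tfrac{1}{\b i!}\,\iCR^{\b i}(Tv)(0)$. Your alternative direct check via $\iCR^{\b i}q^{\b j}$ and $q(0)=0$ is a nice consistency verification but adds nothing beyond the paper's argument.
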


\begin{remark}
	Recall that if the space $\sN(X,\sE)$ of (global) nearly holomorphic sections is non-trivial, 
	then Theorem~\ref{thm:NearlyHolDens} states that $\sN(X,\sE)$ is dense in $C(X,\sE)$ with 
	respect to uniform convergence. It would be interesting to study sequences and series of nearly 
	holomorphic sections by means of the generalized Taylor expansion formula.
\end{remark}


\appendix

\section*{Appendix}

The classification of irreducible Hermitian symmetric spaces of non-compact type is well-known, see \cite{He01} (Lie theoretic) or \cite{Lo77} (Jordan theoretic). The following table collects some of the data of this classification. Here, $X=U/K$ is the irreducible Hermitian symmetric space,
\[
	\mf g = \mf u_\CC = \mf n^+\oplus\mf k_\CC\oplus\mf n^-\,,
	\quad
	n = \dim X = \dim\mf n^+\,,\quad r = \rank X\,,
\]
and $p$ is the structure constant defined by \eqref{eq:BergmanAndDeterminant}.\\[5mm]
{\small
\renewcommand{\arraystretch}{1.5}
\hspace*{-1.8cm}
\begin{tabular}{|m{1.5cm}|c|c|m{4.7cm}|c|c|c|c|}
	\hline
	Type & $U$ & $K$
	& Dynkin diagram of $\mf g$\newline with marked parabolic 
	& $\mf n^+$ & $n$ & $r$ & $p$\\\hline\hline
	$I_{r,s}$, $r\leq s$ \newline{\small(A III)} & $\SU_{r+s}$ & $S(\U_r\times U_s)$ 
		& \DynkinA
		& $\CC^{r\times s}$ & $rs$ & $r$ & $r+s$\tabularnewline\hline
	$II_k$ \newline{\small(D III)} & $\SO_{2k}^\RR$ & $U_k$
		& \DynkinD
		& $\CC^{k\times k}_\text{asym}$ & $\tfrac{k(k-1)}{2}$ & $[\tfrac{1}{2}k]$ & $2k-2$ \\[4mm]\hline
	$III_k$ \newline{\small(C I)} & $\Sp_k^\RR$ & $U_k$
		& \DynkinC
		& $\CC^{k\times k}_\text{sym}$ & $\tfrac{k(k+1)}{2}$ & $k$ & $k+1$\\\hline
	$IV_k$ \newline{\small(BD I)} & $\SO_{k+2}^\RR$ & $\SO_k^\RR\times\SO_2^\RR$
		& $k$ even:\newline\DynkinB\vspace*{2mm}\newline $k$ odd:\vspace{-1mm}\newline\DynkinBD
		& $\CC^k$ & $k$ & $2$ & $k$ \\[1cm]\hline
	$V$ \newline{\small(E III)} & $\mf u = \mf e_{6(-78)}$ & $\mf k = \mf{so}^\RR_{10} + \RR$
		& \DynkinEVI
		& $\mathbb O_\CC^{1,2}$ & $16$ & $2$ & $12$\\[4mm]\hline
	$VI$ \newline{\small(E VII)} & $\mf u = \mf e_{7(-133)}$ & $\mf k = \mf e_6+\RR$ 
		& \DynkinEVII
		& $\mathbb O_{\CC,\text{herm}}^{3\times 3}$ & $27$ & $3$ & $18$ \\[4mm]\hline
\end{tabular}
}


\bibliographystyle{amsplain}
\bibliography{bibdb}

\providecommand{\bysame}{\leavevmode\hbox to3em{\hrulefill}\thinspace}
\providecommand{\MR}{\relax\ifhmode\unskip\space\fi MR }
\providecommand{\MRhref}[2]{%
  \href{http://www.ams.org/mathscinet-getitem?mr=#1}{#2}
}
\providecommand{\href}[2]{#2}
\begin{thebibliography}{10}

\bibitem{A06}
A.~Arvanitoyeorgos, \emph{{Geometry of flag manifolds}}, Int. J. Geom. Methods
  Mod. Phys. \textbf{3} (2006), no.~5--6, 957--974.

\bibitem{BN04}
W.~Bertram and K.-H. Neeb, \emph{{Projective completions of Jordan pairs. I.
  The generalized projective geometry of a Lie algebra}}, J. Algebra
  \textbf{277} (2004), no.~2, 474--519.

\bibitem{Bou02}
N.~Bourbaki, \emph{{Lie groups and Lie algebras. Chapters 4--6. Translated from
  the 1968 French original by Andrew Pressley}}, Elements of Mathematics
  (Berlin), Springer-Verlag, Berlin, 2002.

\bibitem{Ca05b}
R.~Camporesi, \emph{{A generalization of the Cartan--Helgason theorem for
  Riemannian symmetric spaces of rank one}}, Pacific J. Math. \textbf{222}
  (2005), no.~1, 1--27.

\bibitem{Ca05a}
\bysame, \emph{{The Helgason Fourier transform for homogeneous vector bundles
  over compact Riemannian symmetric spaces -- the local theory}}, J. Funct.
  Anal. \textbf{220} (2005), no.~1, 97--117.

\bibitem{EP96}
M.~Engli\v{s} and J.~Peetre, \emph{{Covariant Cauchy--Riemann operators and
  higher Laplacians on Kähler manifolds}}, J. reine angew. Math. \textbf{478}
  (1996), 17--56.

\bibitem{FK90}
J.~Faraut and A.~Koranyi, \emph{{Function spaces and reproducing kernels on
  bounded symmetric domains}}, J. Funct. Anal. \textbf{88} (1990), 64--89.

\bibitem{HC55}
Harish-Chandra, \emph{{Representations of semisimple Lie groups. IV.}}, Amer.
  J. Math. \textbf{77} (1955), 743--777.

\bibitem{HC56}
\bysame, \emph{{Representations of semisimple Lie groups. VI.}}, Amer. J. Math.
  \textbf{78} (1956), 564--628.

\bibitem{He84}
S.~Helgason, \emph{{Groups and geometric analysis}}, Pure and Applied
  Mathematics, vol. 113, Academic Press Inc., Orlando, FL, 1984.

\bibitem{He01}
\bysame, \emph{{Differential geometry, Lie groups, and symmetric spaces.
  Corrected reprint of the 1978 original}}, Graduate Studies in Mathematics,
  vol.~34, American Mathematical Society, Providence, RI, 2001.

\bibitem{Hu63}
L.K. Hua, \emph{{Harmonic analysis of functions of several complex variables in
  the classical domains}}, Translations of Mathematical Monographs, vol.~6,
  American Mathematical Society, Providence, R.I., 1963.

\bibitem{Kna02}
A.~W. Knapp, \emph{{Lie groups beyond an introduction}}, second ed., Progress
  in Mathematics, vol. 140, Birkh\"auser Boston Inc., Boston, MA, 2002.

\bibitem{Ko08}
T.~Kobayashi, \emph{{Multiplicity-free theorems of the restrictions of unitary
  highest weight modules with respect to reductive symmetric pairs.}}, Progr.
  Math., vol. 255, Representation theory and automorphic forms, Birkhäuser
  Boston, Boston, MA, 2008.

\bibitem{Kos59}
B.~Kostant, \emph{{A formula for the multiplicity of a weight}}, Trans. Amer.
  Math. Soc. \textbf{93} (1959), 53--73.

\bibitem{Kos04}
\bysame, \emph{{A branching law for subgroups fixed by an involution and a
  noncompact analogue of the Borel-Weil theorem}}, Progr. Math., vol. 220,
  Noncommutative harmonic analysis, Birkhäuser Boston, Boston, MA, 2004.

\bibitem{Ku10}
S.~Kumar, \emph{{Tensor product decomposition}}, Proceedings of the
  International Congress of Mathematicians. Volume III, Hindustan Book Agency,
  New Delhi, 2010.

\bibitem{Lo75}
O.~Loos, \emph{{Jordan Pairs}}, Lecture notes in Mathematics, vol. 460,
  Springer-Verlag, Berlin-New York, 1975.

\bibitem{Lo77}
\bysame, \emph{{Bounded symmetric domains and Jordan pairs}}, Lecture notes,
  University of California, Irvine, 1977.

\bibitem{Mo64}
C.C. Moore, \emph{{Compactification of symmetric spaces II: The Cartan
  domains}}, Amer. J. Math. \textbf{86} (1964), no.~2, 358--378.

\bibitem{Mos06}
J.~Mostovoy, \emph{{Spaces of rational maps and the Stone--Weierstrass
  theorem}}, Topology \textbf{45} (2006), no.~2, 281--293.

\bibitem{PPZ90}
J.~Peetre, L.~Peng, and G.~Zhang, \emph{{A weighted Plancherel formula I. The
  case of the unit disc. Applications to Hankel operators}}, Stockholm
  University (1990).

\bibitem{PZ93}
J.~Peetre and G.~Zhang, \emph{{Harmonic analysis on the quantized Riemann
  sphere}}, Internat. J. Math. Sci. \textbf{16} (1993), 225--243.

\bibitem{PZ98}
\bysame, \emph{{Invariant Cauchy--Riemann operators and relative discrete
  series of line bundles over the unit ball of $\CC^d$}}, Michigan Math. J.
  \textbf{45} (1998), 387--397.

\bibitem{Sat80}
I.~Satake, \emph{{Algebraic structures of symmetric domains}}, Kan\^o Memorial
  Lectures, vol.~4, Iwanami Shoten, Tokyo, 1980.

\bibitem{Sch84}
H.~Schlichtkrull, \emph{{One-dimensional K-types in finite-dimensional
  representations of semisimple Lie groups: a generalization of Helgason's
  theorem}}, Math. Scand. \textbf{54} (1984), no.~2, 279--294.

\bibitem{Sc69}
W.~Schmid, \emph{{Die Randwerte holomorpher Funktionen auf hermitesch
  symmetrischen Räumen}}, Inv. math. \textbf{9} (1969), 61--80.

\bibitem{S12}
B.~Schwarz, \emph{{Nearly holomorphic functions on compact Hermitian symmetric
  spaces. Part II.}}, In preparation.

\bibitem{Sh94}
N.~Shimeno, \emph{{The Plancherel formula for spherical functions with a
  one-dimensional K-type on a simply connected simple Lie group of Hermitian
  type}}, J. Funct. Anal. \textbf{121} (1994), no.~2, 330--388.

\bibitem{S86}
G.~Shimura, \emph{{On a class of nearly holomorphic automorphic forms}}, Ann.
  of Math. (2) \textbf{123} (1986), 347--406.

\bibitem{S87}
\bysame, \emph{{Nearly holomorphic functions on Hermitian symmetric spaces}},
  Math. Ann. \textbf{278} (1987), 1--28.

\bibitem{Up86}
H.~Upmeier, \emph{{Jordan algebras and harmonic analysis on symmetric spaces}},
  Amer. J. Math. \textbf{108} (1986), no.~1, 1--25.

\bibitem{We08}
R.O. Wells, Jr., \emph{{Differential analysis on complex manifolds}}, 3 ed.,
  Graduate Texts in Mathematics, vol.~65, Springer, New York, 2008.

\bibitem{Zh02}
G.~Zhang, \emph{{Nearly holomorphic functions and relative discrete series of
  weighted $L^2$-spaces on bounded symmetric domains}}, J. Math. Kyoto Univ.
  \textbf{42} (2002), 207--221.

\end{thebibliography}

\end{document}